\def\qH2opt{\mbox{\textsc{q{\scriptsize $\mathcal{H}_2$}opt-ph}}}   
\def\podph{\mbox{\textsc{pod-ph}}}  
\def\poddeimph{\mbox{\textsc{pod-deim-ph}}}  
\def\HtwoEpsph{\mbox{\textsc{{\scriptsize$\mathcal{H}_2^{\varepsilon}$}-ph}}}
\def\HtwoEpsdeimph{\mbox{\textsc{{\scriptsize$\mathcal{H}_2^{\varepsilon}$}-deim-ph}}}
\def\HtwoEps{\mbox{$\mathcal{H}_2^{\Large \varepsilon}$}}
\def\podHtwoEpsph{\mbox{\textsc{{\scriptsize pod-$\mathcal{H}_2^{\varepsilon}$}-ph}}}
\def\podHtwoEpsdeimph{\mbox{\textsc{{\scriptsize pod-$\mathcal{H}_2^{\varepsilon}$}-\textsc{deim}-ph}}}
\def\podHtwoEps{\mbox{\textsc{{\scriptsize pod-$\mathcal{H}_2^{\varepsilon}$}}}}
\newcommand{\bfrho}{{\boldsymbol{\rho}}}
\newcommand{\bfeta}{{\boldsymbol{\eta}}}
\newcommand{\bfzeta}{{\boldsymbol{\zeta}}}
\newcommand{\bfxi}{{\boldsymbol{\xi}}}
\newcommand{\bfeps}{{\boldsymbol{\varepsilon}}}
\newcommand{\cbfA}{\mbox{\boldmath${\EuScript{A}}$} }
\newcommand{\cbfG}{\mbox{\boldmath${\EuScript{G}}$}}
\newcommand{\cbfP}{\mbox{\boldmath${\EuScript{P}}$} }
\newcommand{\bhatx}{{\boldsymbol{\hat{\mathit{x}}}}}
\newcommand{\bhatxdot}{{\boldsymbol{\dot{\hat{x}}}}}
\newcommand{\bhaty}{{\boldsymbol{\hat{\mathit{y}}}}}
\newcommand{\real}{\mathbb{R}}
\newcommand {\cE} {{\cal E}}
\newcommand {\cL} {{\cal L}}
\newcommand {\cV} {{\cal V}}
\newcommand {\cW} {{\cal W}}
\newcommand{\be}{\mathbf{e}}
\newcommand{\bff}{\mathbf{f}}
\newcommand{\bfe}{\mathbf{e}}
\newcommand{\bp}{\mathbf{p}}
\newcommand{\bq}{\mathbf{q}}
\newcommand{\bu}{\mathbf{u}}
\newcommand{\bv}{\mathbf{v}}
\newcommand{\bw}{\mathbf{w}}
\newcommand{\bx}{\mathbf{x}}
\newcommand{\by}{\mathbf{y}}
\newcommand{\bz}{\mathbf{z}}
\newcommand{\bA}{\mathbf{A}}
\newcommand{\bB}{\mathbf{B}}
\newcommand{\bE}{\mathbf{E}}
\newcommand{\bF}{\mathbf{F}}
\newcommand{\bG}{\mathbf{G}}
\newcommand{\bI}{\mathbf{I}}
\newcommand{\bJ}{\mathbf{J}}
\newcommand{\bQ}{\mathbf{Q}}
\newcommand{\bR}{\mathbf{R}}
\newcommand{\bU}{\mathbf{U}}
\newcommand{\bV}{\mathbf{V}}
\newcommand{\bW}{\mathbf{W}}
\newcommand{\bfg}{\mathbf{g}}
\newcommand{\bfu}{\mathbf{u}}
\newcommand{\bfv}{\mathbf{v}}
\newcommand{\bfx}{\mathbf{x}}
\newcommand{\bfy}{\mathbf{y}}
\newcommand{\bfz}{\mathbf{z}}
\newcommand{\bfB}{\mathbf{B}}
\newcommand{\bfE}{\mathbf{E}}
\newcommand{\bfF}{\mathbf{F}}
\newcommand{\bfI}{\mathbf{I}}
\newcommand{\bfJ}{\mathbf{J}}
\newcommand{\bfK}{\mathbf{K}}
\newcommand{\bfL}{\mathbf{L}}
\newcommand{\bfM}{\mathbf{M}}
\newcommand{\bfP}{\mathbf{P}}
\newcommand{\bfQ}{\mathbf{Q}}
\newcommand{\bfR}{\mathbf{R}}
\newcommand{\bfU}{\mathbf{U}}
\newcommand{\bfV}{\mathbf{V}}
\newcommand{\bfW}{\mathbf{W}}
\newcommand{\bPi}{{\boldsymbol{\Pi}}}
\newcommand{\btheta}{{\boldsymbol{\theta}}}
\newfont{\Bb}{msbm10 scaled\magstep0}
\def\IR{\mbox {\Bb R}}
\def\IC{\mbox {\Bb C}}
\title{Structure-preserving model reduction 
for nonlinear port-Hamiltonian systems\thanks{This work was supported in
part by NSF through Grant DMS-1217156.}}
\author{S. Chaturantabut\thanks{Department of Mathematics and Statistics,Thammasat University, Pathumthani, 12120, Thailand. {\small \tt saifon@mathstat.sci.tu.ac.th}} \and C.~Beattie\thanks{Department of Mathematics,
Virginia Polytechnic Institute and State University,
460 McBryde, Virginia Tech,
Blacksburg, VA 24061-0123.  {\small \tt \{beattie,gugercin\}@math.vt.edu}} \and S.~Gugercin$^\ddagger$}
\begin{document}
\maketitle
\footnotetext{DRAFT: \today}

\begin{abstract}
This paper presents a structure-preserving model reduction approach applicable to  
large-scale, nonlinear port-Hamiltonian systems.  Structure preservation in the reduction step
ensures the retention of port-Hamiltonian structure which, in turn, assures the stability and passivity of the reduced 
model.  Our analysis provides  \emph{a priori} error 
bounds for both state variables and outputs.  Three techniques are considered for constructing
bases needed for the reduction: one that utilizes proper orthogonal decompositions;
one that utilizes $\mathcal{H}_2/\mathcal{H}_{\infty}$-derived optimized bases; and one that is a mixture of the two.    
The complexity of evaluating the reduced nonlinear term is managed efficiently using a modification of the discrete
empirical interpolation method (\textsc{deim}) that also preserves port-Hamiltonian structure. 
The efficiency and accuracy of this model reduction framework
are illustrated with two examples: a nonlinear ladder network 
and a tethered Toda lattice.
\end{abstract}


\begin{keywords}
nonlinear model reduction, proper orthogonal decomposition,
port-Hamiltonian, $\mathcal{H}_2$ approximation, structure preservation
\end{keywords}

\begin{AMS}
37M05, 65P10, 93A15
\end{AMS}

\pagestyle{myheadings}
\thispagestyle{plain}
\markboth{S. CHATURANTABUT, C. BEATTIE, AND S. GUGERCIN}{ Model reduction for nonlinear port-Hamiltonian systems}


\section{Introduction and Background}

The modeling of complex physical systems often
involves systems of coupled partial differential equations, which upon spatial discretization, lead to dynamical models and systems of 
\emph{ordinary} differential equations with very large state-space dimension.  
This motivates  \emph{model reduction} methods that produce
 low dimensional surrogate models capable of mimicking 
the input/output behavior  of the original system model. 
Such reduced-order models could then be used as proxies,
replacing the original system model in various computationally 
intensive contexts that are sensitive to system order, for example as a component 
in a larger simulation.  
Dynamical systems frequently have structural features that reflect underlying physics and conservation laws characteristic of the phenomena modeled.  Reduced models that do not share such key structural features with 
the original system may produce response artifacts that are ``unphysical"  and 
as a result, such reduced models may be unsuitable for 
use as dependable surrogates for the original system, even if they otherwise yield high response fidelity.  
The key system feature that we wish to retain in our reduced models will be port-Hamiltonian structure.  In a certain sense,  
this will be an expression of system passivity. 

\subsection{Port-Hamiltonian systems}
Models of dynamic phenomena may be constructed within a system-theoretic network modeling paradigm that formalizes the interconnection of naturally specified subsystems.  If the core dynamics of subsystem components are described by variational principles (e.g., least-action or virtual work),  the aggregate system model typically has structural features that characterize it as a 
\emph{port-Hamiltonian system}.  While greater generality is both possible and useful (see, in particular, the review article \cite{vanderSchaft2006phs}, and the monographs \cite{duindam2009modeling} and \cite{zwart2009dpp}),  it will suffice to consider 
realizations of finite-dimensional nonlinear port-Hamiltonian (\textsc{nlph}) systems that appear as:
\begin{equation}\label{PHdef}
\begin{array}{l}
\dot{\bfx}=\left(\bfJ-\bfR\right)\nabla_{\!\bfx}{H}(\bfx)+\bfB\bfu(t) \\[.1in]
\bfy=\bfB^T\nabla_{\!\bfx}{H}(\bfx),
\end{array}
\end{equation}
where $\bfx \in \IR^n$ is the $n$-dimensional state vector; ${H}:\IR^n\rightarrow [0,\infty)$ is a continuously differentiable scalar-valued vector function - the \emph{Hamiltonian}, describing the internal energy of the system as a function of state; $\bfJ=-\bfJ^T \in \IR^{n \times n}$ is the \emph{structure matrix} describing the interconnection of energy storage elements in the system; $\bfR=\bfR^T\geq \mathbf{0}$ is the $n\times n$ \emph{dissipation matrix} describing energy loss in the system; and, $\bfB\in\IR^{n\times m}$ is the \emph{port matrix}  describing how energy enters and exits the system.   We will always assume that the Hamiltonian is bounded below and so without loss of generality, strictly positive, ${H}(\bfx)>0$ for all $\bfx$. 

The family of systems characterized by (\ref{PHdef}) generalizes the classical notion of \emph{Hamiltonian systems} which would be expressed in our notation as  $\displaystyle \dot{\bfx}=\bfJ\nabla_{\!\bfx}{H}(\bfx)$.  
The analog of conservation of energy for Hamiltonian systems becomes for (\ref{PHdef}):
\begin{equation}\label{dissIneq}
 {H}(\bfx(t_1))-{H}(\bfx(t_0)) \leq \int_{t_{0}}^{t_{1}} \bfy(t)^T\bfu(t)\ dt,
\end{equation}
 which is to say, the change in the internal energy of the system, as measured by  ${H}$, is bounded by the total work done on the system.    
 The presumed positivity of $H$ conforms with its use as a ``supply function" in the sense of Willems \cite{willems1972dissipativePt1}, and so 
  \textsc{nlph} systems are always \emph{stable} and \emph{passive}.   Furthermore, the class of  \textsc{nlph} systems defined as in (\ref{PHdef}) is closed under power conserving interconnection - connecting port-Hamiltonian systems together produces an aggregate system that must also be port-Hamiltonian, and hence \emph{a fortiori}, must be both stable and passive.  
 This last fact provides compelling motivation to preserve port-Hamiltonian structure when producing low-order surrogate models intended to be used as proxies for systems of the sort defined by (\ref{PHdef}).  
 
 We assume in all that follows that the matrices $\bfJ$, $\bfR$, and $\bfB$ are \emph{constant}, however this assumption is adopted here largely for convenience.  $\bfJ$, $\bfR$, and $\bfB$ may each depend on the state vector, $\bfx$, input vector, $\bfu$, and may also carry an explicit time dependence, all without introducing any complications to port-Hamiltonian aspects of system structure; the dissipation inequality (\ref{dissIneq}) will still hold and the system remains passive. For this reason, port-Hamiltonian systems can accommodate a very rich variety of nonlinear interactions.  Moreover, the structure-preserving strategies for model reduction that we describe below may be adapted with negligible modification in this more complex setting.   
 
\subsection{Petrov-Galerkin reduced models}
Most model reduction approaches involve some variation of a Petrov-Galerkin projective approximation to the equations describing the system dynamics.  This  proceeds by  choosing two subspaces of $\IR^n$: an $r$-dimensional trial subspace, 
${\mathcal V}_r\subset \IR^n$, and an $r$-dimensional test subspace,  ${\mathcal W}_r\subset \IR^n$.
It is convenient and nonrestrictive in practice to assume additionally that ${\mathcal V}_r$ and ${\mathcal W}_r$ have 
a ``generic orientation" with respect to one another so that neither subspace contains any nontrivial vectors that are orthogonal to all vectors in the other subspace. 
The evolution of an associated reduced-order model may be described in the following (initially indirect) way:
\begin{equation} \label{PetrGal}
\begin{array}{c}
\mbox{Find a trajectory, } \bfv_r(t), \mbox{ contained in }{\mathcal V}_r\mbox{ such that }\\
 \dot{\bfv}_r(t) -\left(\bfJ-\bfR\right)\nabla_{\!\bfx}{H}(\bfv_r)-\bfB\bfu(t) 
\quad \perp \quad {\mathcal W}_r;  \\
 \mbox{the  associated output is } \bfy_r(t)= \bfB^T\nabla_{\!\bfx}{H}(\bfv_r).
\end{array}
\end{equation}

The dynamics  described by (\ref{PetrGal}) 
can be represented directly as a dynamical system evolving in a state-space of reduced dimension $r$ once bases are chosen for the two subspaces  ${\mathcal V}_r$ and  ${\mathcal W}_r$.  Let 
$\mbox{\textsf{Ran}}(\bfM)$ denote the range of a matrix $\bfM$. 
Define matrices  $\bfV_{\! r},\, \bfW_{\! r}\in \IR^{n \times r}$
 so that $\mathcal{V}_{r}=\mbox{\textsf{Ran}}(\bfV_{\! r})$ and 
$\mathcal{W}_{r}=\mbox{\textsf{Ran}}(\bfW_{\! r})$. 
  We can represent reduced system trajectories as 
$\bfv_r(t)= \bfV_r \bfx_r(t)$ with $\bfx_r (t)\in \IR^r$ for each $t$;  
the Petrov-Galerkin approximation (\ref{PetrGal})
can be rewritten as 
\begin{align*}
\bfW_{\! r}^{T}&\left[\bfV_{\! r}\dot{\bfx}_r (t)-\left(\bfJ-\bfR\right)\nabla_{\!\bfx}{H}(\bfV_r \bfx_r)-\bfB\bfu(t) \right]=\mathbf{0}\\
\mbox{and} & \quad \bfy_r(t) = \bfB^T\nabla_{\!\bfx}{H}(\bfV_r \bfx_r).
\end{align*}
Since ${\mathcal V}_r$ and ${\mathcal W}_r$ are assumed to have 
a generic orientation with respect to one another, $\bfW_{\! r}^{T}\bfV_{\! r}$ is invertible and we may choose bases  for  ${\mathcal V}_r$ and ${\mathcal W}_r$ such that $\bfW_{\! r}^{T}\bfV_{\! r}=\bfI$. 
This leads to a  state-space representation of a reduced-order nonlinear dynamical system approximating (\ref{PHdef}):
\begin{equation}\label{PetrGalROM}
\begin{array}{l}
\dot{\bfx}_r=\bfW_r^{T}\left(\bfJ-\bfR\right)\nabla_{\!\bfx}{H}(\bfV_r \bfx_r) +\bfW_r^{T}\bfB\bfu(t) \\[.1in]
\bfy_r=\bfB^T\nabla_{\!\bfx}{H}(\bfV_r \bfx_r),
\end{array}
\end{equation}

Typically $r\ll n$ and (\ref{PetrGalROM}) describes a reduced-order model for the original system (\ref{PHdef}).
There are two shortcomings that may be anticipated.    First, (\ref{PetrGalROM}) will not have the form of (\ref{PHdef}) 
unless special subspaces are chosen, and so, (\ref{PetrGalROM})  will not typically be a \textsc{nlph} system and passivity may be lost.   
Secondly, if the Hamiltonian function, $H$, is non-quadratic, each evaluation of 
$\nabla_{\!\bfx}{H}(\bfV_r \bfx_r) $ in (\ref{PetrGalROM}) occurring in the course of a simulation will likely require a lifting of $\bfx_r$ to $\IR^n$ (implicit in the formation of $\bfV_r \bfx_r $), and so direct simulation of (\ref{PetrGalROM}) is still likely to have complexity proportional to $n\gg r$;  little or no savings may be realized from reducing the system order.

We consider each of these issues in subsequent sections.  
In  \S \ref{sec:MOR_Proj_NPH}, a structure-preserving 
model reduction approach for large-scale  \textsc{nlph} systems  
will be introduced.  
This approach is built  upon 
Petrov-Galerkin projections that are modified to assure that the resulting reduced system 
retains port-Hamiltonian structure; thus stability and passivity. 
Three types of reduced-order 
bases used to define these projections will be considered:  
\begin{inparaenum}[(i)] \item one based on the Proper Orthogonal Decomposition (\textsc{pod}), 
\item one derived from $\mathcal{H}_2$-optimal approaches for a related linear problem 
(which we refer to as  ``$\HtwoEps$-bases"), \item hybrid $\HtwoEps$- \textsc{pod} bases that combine both types. 
\end{inparaenum} The bases (i) and (ii) were originally considered in \cite{Beattie2011_NPH}.
 Numerical experiments in \S \ref{sec:MOR_PH_exLadder} illustrate that the hybrid $\HtwoEps$- \textsc{pod} bases 
significantly outperform the other two.
 In \S \ref{sec:MOR_Proj_NPHerror}, we develop corresponding error analyses and bounds for 
reduced states and  outputs. 
In order to resolve the ``lifting bottleneck" described above, we develop, in  \S \ref{sec:MOR_DEIMstr},  a variant of  the  Discrete Empirical Interpolation Method (\textsc{deim}) \cite{ChatSorDEIM_siam2010} that incorporates the structure-preserving model reduction approach of 
\S \ref{sec:MOR_Proj_NPH}. 
In \S \ref{sec:Numer_examples}, 
the efficiency and accuracy of our approach 
are illustrated with two examples: a nonlinear ladder network 
and a tethered Toda lattice.
Corresponding \emph{a priori} error bounds 
for states and outputs  are derived in \S \ref{sec:MOR_DEIMstr_err}.


\section{Preserving port-Hamiltonian Structure in Reduced Models}
\label{sec:MOR_Proj_NPH}

The process of obtaining a reduced model from an original full-order model can be viewed as one of identifying and preserving high-value portions of the state space, i.e., portions of the state space that contribute substantively to the system response.  We proceed with the following heuristics: Suppose we have identified two $r$-dimensional subspaces, 
$\mathcal{V}_{r}$ and $\mathcal{W}_{r}$, that are ``high-value" in the sense that
 for ``most" input signal profiles, $\bfu(t)$,  in (\ref{PHdef}), we have  
\begin{equation} \label{GoodSpace}
{\mbox{\small\textsc{High Value}} \atop \mbox{\small\textsc{ PH-Spaces: }}}
\left\{ \rule{0mm}{11mm}\right.\mbox{\raisebox{8mm}
{\hspace*{-8mm}\begin{minipage}[t]{3.8in} {\small 
 \begin{itemize}
\item The associated trajectory, $\bfx(t)$, stays ``close" to $\mathcal{V}_{r}$, so that\\
 $\bfx(t)\approx \bv_r(t)$ for some $\bv_r(t)\in \cV_r$, and \\[-2mm]
\item the internal force, $\nabla_{\!\bfx}{H}(\bfx)$, stays ``close" to $\mathcal{W}_{r}$, so that\\
 $\nabla_{\!\bfx}{H}(\bfx(t))\approx \bw_r(t)$ for some $ \bw_r(t) \in \cW_r$.
\end{itemize}  }
 \end{minipage}
 } }
 \end{equation}
 Evidently if  $\mathcal{V}_{r}=\mathsf{Ran}(\bV_r)$ and $\mathcal{W}_{r}=\mathsf{Ran}(\bW_r)$, then
$\bv_r(t)=\bfV_r\bfx_r(t)$ for some trajectory $\bfx_r\in\IR^r$ and  $\bw_r(t)=\bfW_r\bff_r(t)$,  for some choice of $\bff_r\in\IR^r$.
Exactly what comprises ``most" input signal profiles and how one measures ``closeness" in (\ref{GoodSpace}) will vary depending on context; 
we consider different possibilities  later in this section.   For the time being, note that if $\bfx(t)\approx \bfV_r\bfx_r(t)$ then plausibly, 
$$\nabla_{\!\bfx}{H}(\bfV_r\bfx_r(t))\approx\nabla_{\!\bfx}{H}(\bfx(t))\approx \bfW_r\bff_r(t).$$

Significantly, these statements amount to assertions about the \emph{subspaces}, $\mathcal{V}_{r}$ and $\mathcal{W}_{r}$, and do not constrain the choice of bases for the subspaces.  As long as $\mathcal{V}_{r}$ and $\mathcal{W}_{r}$ have a generic orientation with respect to one another,  bases may be chosen so that $\bfW_{\! r}^{T}\bfV_{\! r}=\bfI$.  With this in mind, note further that 
\begin{equation} \label{reduced_f}
\bff_r(t)=\bfV_r^T\bfW_r\bff_r(t)\approx\bfV_r^T\nabla_{\!\bfx}{H}(\bfV_r\bfx_r(t))= \nabla_{\!\bfx_r}{H_r}(\bfx_r(t)),
\end{equation}
where we have introduced a \emph{reduced Hamiltonian}, $H_r(\bfx_r)=H(\bfV_r\bfx_r)$. Thus, 
\begin{equation} \label{redHam}
\nabla_{\!\bfx}{H}(\bfV_r\bfx_r(t))\approx \bfW_r\nabla_{\!\bfx_r}{H_r}(\bfx_r(t)).
\end{equation}

Substituting $\bfV_r\bfx_r(t)$ for $\bfx(t)$ and $\bfW_r\nabla_{\!\bfx_r}{H_r}(\bfx_r(t))$ for $\nabla_{\!\bfx}{H}(\bfx(t))$ 
in (\ref{PHdef}) and then multiplying by $\bfW_r^T$, 
leads to a state-space representation of a reduced port-Hamiltonian approximation: 
\begin{equation} \label{redPHdef}
\begin{array}{l}
 \dot{\mathbf{x}}_r =(\bfJ_r-\bfR_r)\nabla_{\!\bfx_r}H_r(\mathbf{x}_r) +\bfB_r\bfu(t),\\[.1in]
      \quad    \mathbf{y}_r(t) =\mathbf{B}_r^T\, \nabla_{\!\bfx_r}H_r(\mathbf{x}_r) 
\end{array}
\end{equation} 
with $H_r(\bfx_r)=H(\bfV_r\bfx_r)$,
$\bfJ_r= \bfW_r^{T} \bfJ \bfW_r$,  $\bfR_r= \bfW_r^{T} \bfR \bfW_r$, and $\bfB_r= \bfW_r^{T} \bfB$. 
Note that $H_r:\IR^r\rightarrow [0,\infty)$ is continuously differentiable; $\bfJ_r=-\bfJ_r^T$; and $\bfR_r=\bfR_r^T\geq \mathbf{0}$, so (\ref{redPHdef}) retains the structure of  (\ref{PHdef}) and is a port-Hamiltonian system. 

The earlier works, \cite{fujimoto2007brm} and \cite{scherpen2008spm}, also offer structure-preserving model reduction methods  for \textsc{nlph} systems.  These papers exploit the Kalman decomposition and balanced truncation in deriving reduced models of \textsc{nlph} systems. Obtaining the Kalman decomposition of the full-order original system or balancing it,  is computationally demanding  for  nonlinear systems of even modest order; see e.g.  \cite{fujimoto2010brm,scherpen2008spm} and  references therein.   Such approaches are infeasible for the problem class we consider, which may have thousands of state-variables.   In what follows, we develop approaches 
that remain feasible for this problem class; they depend on the construction of low-dimensional projecting subspaces motivated by the heuristics in (\ref{GoodSpace}) .

\subsection{POD subspaces}
\label{sec:basis_POD}

The Proper Orthogonal Decomposition (\textsc{pod}) is a natural approach to producing  high-value modeling spaces as described in (\ref{GoodSpace}).  \textsc{pod} is  a popular approach to (unstructured) model reduction (\cite{lumley,sirovich1}) which we adapt to our setting as follows: Fix a square integrable input signal, $\bfu(t)$, for the system (\ref{PHdef}).  The corresponding trajectory, $\bfx(t)$, will then also be square integrable.  Denoting orthogonal projections, $\bfP$ and $\bfQ$, we consider the minimization problem:
\begin{eqnarray*}
\bfP_{\!\star}&= \begin{array}{c}\mbox{argmin} \\ {\tiny \mathsf{rank}(\bfP) = r} \end{array}
 \int_0^{\infty}\|\left(\bfI-\bfP\right)\bfx(t)\|^2\,dt~~ \\ {\rm and}\qquad~\\
 \bfQ_{\star}&= \begin{array}{c}\mbox{argmin} \\ {\tiny \mathsf{rank}(\bfQ) = r} \end{array}
\int_0^{\infty}\|\left(\bfI-\bfQ\right) \nabla_{\!\bfx}{H}(\bfx(t))\|^2\,dt. ~~
\end{eqnarray*}
We would like to take $\mathcal{V}_{r}=\mathsf{Ran}(\bfP_{\!\star})$ and $\mathcal{W}_{r}=\mathsf{Ran}(\bfQ_{\star})$,
but this is not a computationally tractable approach as it stands.  If the integrals are truncated and then 
approximated with a Trapezoid Rule, one arrives at a characterization of \textsc{pod} subspaces which is incorporated into our first method, summarized as \emph{Algorithm 1}.  As our numerical results show (and consistent with common experience), the use of \textsc{pod}  in \emph{Algorithm 1} provides subspaces $\mathcal{V}_r$ and $\mathcal{W}_r$ that can be very effective in capturing dynamic features that are present in the original sampled system response.  However, as an empirical method, it is incapable of providing information about dynamic response features that are absent in the sampled system response, but that \emph{could} have been present had a different choice of input profile been made.   One can mitigate this difficulty somewhat by extending \textsc{pod} by including a representative sampling of input profiles, but this leaves open the question of what constitutes a representative sampling of input profiles.  A different approach leads to our next class of subspaces.

\begin{algorithm}[tbp]
\caption{: Structure-preserving \textsc{pod} reduction of NLPH systems (\podph)}
 \label{alg:podph}
\vspace{2mm}

\begin{algorithmic}[1]
\STATE 
Generate a trajectory $\bfx(t)$, and collect snapshots:\\
\quad $\mathbb{X}=\left[\bfx(t_0),\bfx(t_1),\bfx(t_2),\ldots,\bfx(t_N)\right]$
\vspace{2mm}
\STATE    
Simultaneously collect associated \emph{force} snapshots: \\
\quad $\mathbb{F}=\left[\nabla_{\!\bfx}H(\bfx(t_0)),\nabla_{\!\bfx}H(\bfx(t_1)),\ldots,\nabla_{\!\bfx}H(\bfx(t_N))\right]$.
\vspace{2mm}
\STATE    
Truncate an SVD of the snapshot matrix, $\mathbb{X}$, to get a \textsc{pod} basis, $\widetilde{\bfV}_r$, for a ``high-value" subspace of the state space. ($\bfx(t)\approx \widetilde{\bfV}_r\tilde{\bfx}_r(t)$)
\vspace{2mm}
\STATE    
Truncate an SVD of $\mathbb{F}$ to get a second \textsc{pod} basis, $\widetilde{\bfW}_r$, spanning a  second ``high-value" subspace approximating the range of 
$\nabla_{\!\bfx}H(\bfx(t))\approx \widetilde{\bfW}_r \tilde{\bff}_r(t)$.
\vspace{2mm}
\STATE    Change bases $\widetilde{\bfW}_r \mapsto \bfW_r$ and $\widetilde{\bfV}_r\mapsto \bfV_r$ such that~$\bfW_r^T\bfV_r=\bfI$.
\vspace{2mm}
\STATE
With $\bfV_r$ and $\bfW_r$ determined in this way, the \textsc{pod-ph} reduced port-Hamiltonian system is then specified by (\ref{redPHdef}).    
\end{algorithmic}
\end{algorithm}

\subsection{\HtwoEps-optimal subspaces}
\label{sec:basis_HtwoEps}

We next consider a choice of subspaces, $\mathcal{V}_r$ and $\mathcal{W}_r$, that are optimal (in a sense we will describe) for 
\emph{all possible} input profiles that are sufficiently small (``$\varepsilon$-optimal").  It is often the case that this choice will be effective for larger input profiles as well.   In contrast to the previous \textsc{pod} approach, no choice of inputs is necessary and no simulations need to be performed in order to derive the approximating subspaces.  The proviso that (virtual) inputs be sufficiently small, but otherwise arbitrary, allows us to tailor the subspaces $\mathcal{V}_r$ and $\mathcal{W}_r$, so as to provide near-optimal reduction for the corresponding \emph{linearized} port-Hamiltonian model.  Note that linearization is used here only as a tool to obtain useful information that will be encoded into  the projection subspaces, $\mathcal{V}_r$ and $\mathcal{W}_r$, which are then used for reduction of the \emph{nonlinear} system (\ref{PHdef}). 

Any input profile, $\bfu(t)$, may be scaled to have sufficiently small magnitude so that the resulting trajectory, $\bfx(t)$, is small as well.  Then linear terms in the internal forcing dominate, and
 $\nabla_{\!\bfx}H( \bfx ) \approx \bfQ\bfx$ for some symmetric positive definite matrix $\bfQ\in\IR^{n\times n}$.  Indeed, $\bfQ=\nabla^2 H(\mathbf{0})$, the Hessian matrix for $H(\bfx)$ evaluated at $\bfx=\mathbf{0}$.   
 This approximation leads to an ancillary \emph{linear port-Hamiltonian system}:
\begin{equation} \label{eq:lphfom}
\begin{array}{l}
\dot{\bfx}=\left(\bfJ-\bfR\right)\bfQ\,\bfx +\bfB\bfu(t) \\[.1in]
\bfy=\bfB^T\bfQ\,\bfx.
\end{array}
\end{equation}

We proceed to construct projecting subspaces, $\mathcal{V}_r$ and $\mathcal{W}_r$, that would produce,
 via the reduction described in (\ref{redPHdef}), an effective reduced  port-Hamiltonian model approximating (\ref{eq:lphfom}):
 \begin{equation} \label{eq:lphrom}
 \begin{array}{l}
           \dot{\mathbf{x}}_r=(\bfJ_r-\bfR_r)\bfQ_r\bfx_r +\bfB_r\bfu(t), \\[.1in]
           \mathbf{y}_r(t)=\mathbf{B}_r^T\bfQ_r\bfx_r,\
 \end{array}
  \end{equation}
where  $\bfJ_r= \bfW_r^{T} \bfJ \bfW_r$,  $\bfR_r= \bfW_r^{T} \bfR \bfW_r$, $\bfQ_r= \bfV_r^{T} \bfQ \bfV_r$
and $\bfB_r= \bfW_r^{T} \bfB$.  Observe that $\bfJ_r= -\bfJ_r^T$, $\bfR_r= \bfR_r^T \geq 0$ and $\bfQ_r= \bfQ_r^T > 0$. 
The quality of the subspaces is interpreted now as how well (\ref{eq:lphrom}) approximates (\ref{eq:lphfom}). 
This, in turn, may be interpreted as a rational approximation problem: 
We associate the linear dynamical systems (\ref{eq:lphfom}) and (\ref{eq:lphrom}) with their transfer functions,
\begin{equation} \label{eq:G}
 \begin{array}{c}
\cbfG(s)  = \bfB^T( s \bfQ^{-1} - (\bfJ-\bfR))^{-1}\bfB \quad \mbox{and}  \\[.1in]
\cbfG_{r}(s)  = \bfB_r^T( s \bfQ_r^{-1} - (\bfJ_r-\bfR_r))^{-1}\bfB_r,
 \end{array}
\end{equation}
respectively.  If $\cbfG_{r}(s)$ approximates $\cbfG(s)$ well with respect to some (appropriately chosen) norm, then the reduced system outputs will approximate the full order system outputs uniformly well over all inputs with bounded energy (square integrable);  our model reduction problem has been reduced to a  rational approximation problem.  We wish to find a degree-$r$ rational function, $\cbfG_{r}(s)$ having the structure given in (\ref{eq:G}) that also approximates $\cbfG(s)$ well. Tangential rational interpolation provides a useful tool for this: Given $r$ interpolation points $\sigma_1,\ldots, \sigma_r$ in the complex plane with corresponding tangent directions 
$\{\mathsf{b}_1,\ldots, \mathsf{b}_r\}\in \IC^m$, construct an $r$th order system,  $\cbfG_r$, so that $\cbfG_r$ is port-Hamiltonian, and  
\begin{equation} \label{eq:intph}
\cbfG_r(\sigma_i)\mathsf{b}_i=\cbfG(\sigma_i)\mathsf{b}_i \quad \mbox{for}\quad i=1,\ldots,r.
\end{equation}
A solution to this problem was given in \cite{gugercin2011spt}.
\begin{theorem}
  \label{thm:mimointph}
Given interpolation points $\sigma_1,\ldots, \sigma_r$  and tangent directions \\
$\mathsf{b}_1,\ldots, \mathsf{b}_r$, construct 
{\small
\begin{eqnarray*}
\widetilde{\bfV}_r = [(\sigma_1 \bfI - (\bfJ-\bfR)\bfQ)^{-1}\bfB\mathsf{b}_1,\ldots,(\sigma_r \bfI - (\bfJ-\bfR)\bfQ)^{-1} \bfB
\mathsf{b}_r].
\end{eqnarray*}
}
Define the Cholesky factorization of  $\widetilde{\bfV}_r^T\bfQ\widetilde{\bfV}_r = \bfR^T \bfR$, assign
 $\bfV_r =\widetilde{\bfV}_r\bfR^{-1}$, and then construct $\bfW_r = \bfQ \bfV_r$.  
 Set
\begin{equation} \label{redJQR}
\bfJ_r = \bfW_r ^T\bfJ \bfW_r,~\bfQ_r =  \bfV_r^T \bfQ \bfV_r  =  \bfI_r,~
\bfR_r = \bfW_r ^T\bfR \bfW_r, \quad \mbox{and}\quad  \bfB_r = \bfW_r ^T\bfB.
\end{equation}
\begin{equation} \label{PHRed}
\mbox{Then~the~reduced~model,}\ \cbfG_r: \qquad
\begin{array}{l}
\dot \bfx_r =  (\bfJ_r - \bfR_r)\bfQ_r\bfx_r + \bfB_r\,\bfu \\
\bfy_r =  \bfB_r^T \bfQ_r \bfx_r
\end{array} \qquad \qquad
\end{equation}
is port-Hamiltonian (hence stable and passive) and also satisfies the interpolation conditions (\ref{eq:intph}). 
\end{theorem}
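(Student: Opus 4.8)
The plan is to establish the two assertions separately: first that the reduced model $\cbfG_r$ is port-Hamiltonian (hence stable and passive), and then that it satisfies the tangential interpolation conditions (\ref{eq:intph}). The structural claim is the quicker of the two and follows by reading off symmetry properties from the construction (\ref{redJQR}). Since $\bfJ^T=-\bfJ$ we get $\bfJ_r^T=\bfW_r^T\bfJ^T\bfW_r=-\bfJ_r$, so $\bfJ_r$ is skew-symmetric; since $\bfR=\bfR^T\geq\mathbf{0}$ the congruence $\bfR_r=\bfW_r^T\bfR\bfW_r$ is symmetric positive semidefinite; and the scaling $\bfV_r=\widetilde{\bfV}_r\bfR^{-1}$ by the Cholesky factor of $\widetilde{\bfV}_r^T\bfQ\widetilde{\bfV}_r$ is designed precisely so that $\bfQ_r=\bfV_r^T\bfQ\bfV_r=\bfI_r>0$. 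Because $\bfQ_r$ is symmetric positive definite, $H_r(\bfx_r)=\frac{1}{2}\bfx_r^T\bfQ_r\bfx_r$ is a legitimate positive Hamiltonian, so (\ref{PHRed}) has exactly the form (\ref{eq:lphrom}), the dissipation inequality (\ref{dissIneq}) applies, and stability and passivity follow as for any \textsc{nlph} system.

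For interpolation, the key preliminary step is to rewrite the full-order transfer function in ordinary (non-descriptor) state-space form. Factoring $s\bfQ^{-1}-(\bfJ-\bfR)=(s\bfI-(\bfJ-\bfR)\bfQ)\bfQ^{-1}$ yields $\cbfG(s)=\bfB^T\bfQ(s\bfI-\bA)^{-1}\bfB$ with $\bA:=(\bfJ-\bfR)\bfQ$. In this realization the columns of $\widetilde{\bfV}_r$ are exactly the right tangential vectors $(\sigma_i\bfI-\bA)^{-1}\bfB\mathsf{b}_i$, and since $\bfV_r=\widetilde{\bfV}_r\bfR^{-1}$ we have $\mathsf{Ran}(\bfV_r)=\mathsf{Ran}(\widetilde{\bfV}_r)$; hence each interpolation vector lies in $\mathsf{Ran}(\bfV_r)$.

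I then want to recognize the reduced model (\ref{PHRed}) as an ordinary biorthogonal Petrov-Galerkin projection of this realization. Using $\bfW_r=\bfQ\bfV_r$ one checks that $\bfB_r^T=\bfB^T\bfW_r=(\bfB^T\bfQ)\bfV_r$, that $\bfJ_r-\bfR_r=\bfW_r^T(\bfJ-\bfR)\bfW_r=(\bfV_r^T\bfQ)\,\bA\,\bfV_r=\bfW_r^T\bA\bfV_r$, and that the biorthogonality $\bfW_r^T\bfV_r=\bfV_r^T\bfQ\bfV_r=\bfI_r$ holds. Thus $\cbfG_r(s)=(\bfB^T\bfQ)\bfV_r(s\bfI-\bfW_r^T\bA\bfV_r)^{-1}\bfW_r^T\bfB$, which is precisely the projection of $\cbfG$ with trial space $\mathsf{Ran}(\bfV_r)$ and test space $\mathsf{Ran}(\bfW_r)$. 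Recognizing this identification — that the two-sided construction in (\ref{redJQR}), together with the Cholesky scaling, is exactly a biorthogonal projection of the transformed realization — is the one nonroutine step, and the place where the $\bfQ$-weighting could easily be mishandled; once it is in hand the interpolation is standard.

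Finally, the interpolation follows from the usual rational-interpolation lemma. Writing $\bv_i=(\sigma_i\bfI-\bA)^{-1}\bfB\mathsf{b}_i=\bfV_r\bfz_i$ with $\bfz_i=\bfW_r^T\bv_i$ (valid since $\bfW_r^T\bfV_r=\bfI_r$ gives $\bfV_r\bfW_r^T\bv_i=\bv_i$), a direct computation gives $(\sigma_i\bfI-\bfW_r^T\bA\bfV_r)\bfz_i=\bfW_r^T(\sigma_i\bfI-\bA)\bv_i=\bfW_r^T\bfB\mathsf{b}_i=\bfB_r\mathsf{b}_i$, so $\bfz_i=(\sigma_i\bfI-\bfW_r^T\bA\bfV_r)^{-1}\bfB_r\mathsf{b}_i$. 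Multiplying on the left by $\bfB^T\bfQ\bfV_r$ then yields $\cbfG_r(\sigma_i)\mathsf{b}_i=\bfB^T\bfQ\bv_i=\cbfG(\sigma_i)\mathsf{b}_i$, which is (\ref{eq:intph}). Throughout I would assume the $\sigma_i$ avoid the spectrum of $\bA$ (so the vectors are defined, which holds for $\sigma_i$ off the stable spectrum) and that the interpolation data are nondegenerate enough that $\widetilde{\bfV}_r$ has full column rank, guaranteeing the Cholesky factor is invertible.
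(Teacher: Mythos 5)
Your proposal is correct. The paper itself states Theorem \ref{thm:mimointph} without proof, citing \cite{gugercin2011spt}, and your argument is essentially the standard one used there: structure preservation follows from congruence (skew-symmetry of $\bfJ_r$, positive semidefiniteness of $\bfR_r$, and $\bfQ_r=\bfI_r$ by the Cholesky scaling, so $H_r(\bfx_r)=\frac{1}{2}\bfx_r^T\bfQ_r\bfx_r$ is a valid Hamiltonian), while interpolation follows from the classical Petrov--Galerkin rational interpolation lemma applied to the realization $\cbfG(s)=\bfB^T\bfQ(s\bfI-\bA)^{-1}\bfB$ with $\bA=(\bfJ-\bfR)\bfQ$, where the choice $\bfW_r=\bfQ\bfV_r$ supplies the biorthogonality $\bfW_r^T\bfV_r=\bfI_r$ and puts $(\sigma_i\bfI-\bA)^{-1}\bfB\mathsf{b}_i\in\mathsf{Ran}(\bfV_r)$. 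The regularity assumptions you flag (full column rank of $\widetilde{\bfV}_r$, and $\sigma_i$ off the spectra of $\bA$ and of the reduced matrix $\bfJ_r-\bfR_r$ so that both transfer functions are defined at $\sigma_i$) are exactly the standard implicit hypotheses of such interpolation results.
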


\vspace{1ex}
For information on transfer function interpolation in the special case of single-input/single-output port-Hamiltonian systems, see \cite{Polyuga_vdSchaft2009_infinity,gugercin2009ibh,Polyuga_vdSchaft2010_s0}.  For an overview of model reduction methods for linear port-Hamiltonian systems, see 
\cite{Polyuga_2010_thesis}.
\subsubsection{\HtwoEps~port-Hamiltonian approximation}
 port-Hamiltonian approximations of reduced order may be constructed using Theorem \ref{thm:mimointph} once shifts, $\{\sigma_i\}$,  and  tangent directions, $\{\mathsf{b}_i\}$, are chosen,  but this does not give any information on how best to choose  $\{\sigma_i\}$ and $\{\mathsf{b}_i\}$. 
We discuss issues related to finding effective approximations with respect to the $\mathcal{H}_2$ norm: The $\mathcal{H}_2$ norm of $\cbfG$ is defined as 
$$
\left\| \cbfG\right\|_{\mathcal{H}_2} = \left(\frac{1}{2\pi}\int_{-\infty}^{\infty}
  \left\| \cbfG(\imath \omega )  \right\|_{\rm F}^2 d\omega\right)^{1/2}.
$$
Let $\cbfG_r(s)$ minimize the $\mathcal{H}_2$ error $\left\| \cbfG - \cbfG_r\right\|_{\mathcal{H}_2} $ 
over all possible degree-$r$ rational functions and suppose that $\cbfG_r(s)$ has  a partial fraction expansion
 $\cbfG_r(s)=\sum_{k=1}^r\frac{\mathsf{c}_k\mathsf{b}_k^T}{s-\widehat{\lambda}_k}$. Then, as shown in \cite{gugercin2008hmr}, the interpolation conditions 
\begin{equation} 
\label{eqn:h2cond}
\cbfG(-\widehat{\lambda}_k)\mathsf{b}_k = \cbfG_r(-\widehat{\lambda}_k)\mathsf{b}_k,~~~{\rm for}~k=1,\ldots,r
\end{equation}
are necessary conditions for $\mathcal{H}_2$ optimality (there are additional conditions that must also be satisfied in general).  In other words, 
the optimal $\mathcal{H}_2$ approximant $\cbfG_r$ is a tangential interpolant to $\cbfG$ at the mirror images of the reduced-order poles. For the full set of necessary conditions required for optimality, see \cite{gugercin2008hmr}.

A method was introduced in \cite{gugercin2011spt} that produces 
an interpolatory reduced-order port-Hamiltonian system satisfying the conditions given in
(\ref{eqn:h2cond}). Since the interpolation points $-\widehat{\lambda}_k$ and the tangential directions 
$\mathsf{b}_k$ depend on the reduced-model to be computed, an iterative process is used  
to correct the interpolation points and tangential directions until the desired conditions in (\ref{eqn:h2cond})
are obtained.  These constitute only a subset of the necessary conditions required for $\mathcal{H}_2$-optimality. 
The remaining degrees of freedom are used in maintaining port-Hamiltonian structure.  
An algorithm that accomplishes was introduced in  \cite{gugercin2011spt}. Algorithm \ref{alg:qh2ph} below 
uses this methodology to construct the model reduction bases for the second structure-preserving 
model reduction of \textsc{nlph} systems.
\begin{algorithm}[h!tbp]                 
\caption{: Structure-preserving \HtwoEps~ reduction of \textsc{nlph} systems 
 (\HtwoEpsph)}
\label{alg:qh2ph}                        
\vspace{2mm}
\begin{algorithmic} [1]                
\STATE Linearize the \textsc{nlph} system (\ref{PHdef}) to obtain a linear port-Hamiltonian system of the form in (\ref{eq:lphfom}).
    \STATE  
    Make an initial selection of shifts $\{\sigma_i\}_1^r$, and tangent directions $\{\mathsf{b}_i\}_1^r$.
    \vspace{2mm}
    \WHILE{(not converged)}
        \STATE 
            $\widehat{\bfV}_r = [(\sigma_1 \bfI - (\bfJ-\bfR)\bfQ)^{-1}\bfB\mathsf{b}_1,\ldots,$
             $ (\sigma_r \bfI - (\bfJ-\bfR)\bfQ)^{-1} \bfB\mathsf{b}_r]$
		\vspace{2mm}             
        \STATE 
		    Set $\bfV_r = \widehat{\bfV}_r\bfL^{-1}$ with $\widehat{\bfV}_r^T\bfQ\widehat{\bfV}_r = \bfL^T \bfL$ (so 	$\bfQ_r = \bfV_r ^T\bfQ \bfV_r =  \bfI_r$).
		  \vspace{2mm}
		  \STATE 
		  Set $\bfW_r = \bfQ\bfV_r$. {(so $\bfV_r^T\bfW_r =  \bfI_r$).}
		  \vspace{2mm}
		  \STATE 
		  Set $\bfJ_r = \bfW_r^T\bfJ \bfW_r $, $\bfR_r = \bfW_r^T\bfR \bfW_r $,  and $\bfB_r = \bfW_r^T\bfB$.
		  \vspace{2mm}
		  \STATE 
		   Calculate left eigenvectors: $\bfz_i^T (\bfJ_r-\bfR_r)= \lambda_i\bfz_i^T$.
		   \vspace{2mm}
		  \STATE 
		  Set $\sigma_i \longleftarrow -\lambda_i$ and $\mathsf{b}_i \longleftarrow \bfB_r^T\bfz_i$ for $i=1,\ldots,r$
		      \vspace{2mm}
     \ENDWHILE
     
     \vspace{2mm}
    \COMMENT{    Calculate final \HtwoEpsph~bases: }
    \vspace{2mm}
    	\STATE  Find $\widehat{\bfV}_r = [(\sigma_1 \bfI - (\bfJ-\bfR)\bfQ)^{-1}\bfB\mathsf{b}_1,\ldots,$
$ (\sigma_r \bfI - (\bfJ-\bfR)\bfQ)^{-1} \bfB\mathsf{b}_r]$
		\vspace{2mm}
   	 	\STATE 
    	Set $\bfV_r = \widehat{\bfV}_r\bfL^{-1}$ with $\widehat{\bfV}_r^T\bfQ\widehat{\bfV}_r = \bfL^T \bfL$. 
    	\vspace{2mm}
      	\STATE 
         Set $\bfW_r = \bfQ\bfV_r$. 
         \vspace{2mm}
        \STATE 
		With $\bfV_r$ and $\bfW_r$ determined in this way, construct the reduced nonlinear PH system
		using (\ref{redPHdef}).
\end{algorithmic}
\end{algorithm}

Note that we use the linearized port-Hamiltonian model only to obtain the \HtwoEpsph\ model reduction subspaces. Once
$\bfV_r$ and $\bfW_r$ are obtained using the \HtwoEps-approach outlined in
Algorithm \ref{alg:qh2ph}, we use these subspaces to reduce the original nonlinear system as  
shown in (\ref{redPHdef}).   See \cite{phillips2000pfm,rewienski2003tpl}
for other approaches that derive useful information from linearized systems 
in order to reduce nonlinear systems.
\subsection{A hybrid POD\,-\HtwoEps~approach}
\label{sec:PODH2}
The model reduction subspaces that \textsc{pod} provides are effective in capturing 
the dynamics that are represented in the original snapshot data; but naturally will miss features that are absent in this data. 
To resolve this issue in part, we have proposed to use $\varepsilon$-optimal  subspaces that were accurate for input profiles that are sufficiently 
small (``$\varepsilon$-optimal").   These subspaces are generated from a near-optimal reduction of a linearized port-Hamiltonian model using an approach proposed in .  As the trajectory moves away from the linearization point, the efficiency of these subspaces in capturing the true dynamics might degrade. Therefore, we propose to combine the subspaces resulting from \textsc{pod} (i.e., directly obtained from a simulation of the \textsc{nlph} system) together with the  $\varepsilon$-optimal subspaces resulting from the linearized model.   The structure-preserving reduction is applied as in  (\ref{redPHdef}) with the aggregate subspace.

For a given reduced dimension $r$, let 
$\widehat{\bV}_{\widehat{r}} = [\widehat{\bv}_1, \widehat{\bv}_2, \dots, \widehat{\bv}_{\widehat{r}} ] $ be the \textsc{pod-ph} basis of dimension $\widehat{r}$ obtained from Algorithm \ref{alg:podph} and 
let $\bar{\bV}_{\bar{r}} = [\bar{\bv}_1, \bar{\bv}_2, \dots, \bar{\bv}_{\bar{r}} ] $ be the \HtwoEpsph  \ basis of dimension $\bar{r}$ obtained from Algorithm \ref{alg:qh2ph}, where $\widehat{r}$ and $\bar{r}$ are positive integers with $ \widehat{r} + \bar{r}=r$. 
Then, the hybrid basis $\bV_r$ of dimension $r$ can be obtained from the orthonormal basis of concatenated matrix $[\widehat{\bV}_{\widehat{r}} \ \ \bar{\bV}_{\bar{r}}] \in \mathbb{R}^{n \times r}$
The other projection basis $\bW_r$ can be constructed similarly by combining 
\podph \ and \HtwoEpsph \ basis vectors. This leads to the third algorithm for structure-preserving model reduction of \textsc{nlph} systems.

\begin{algorithm}[h!tbp]
\caption{: Structure-preserving \podHtwoEps~ reduction of \textsc{nlph}  (\podHtwoEpsph)}
 \label{alg:podh2ph}
\vspace{2mm}

\begin{algorithmic}[1]
\STATE  Pick $ \widehat{r}$ and $\bar{r}=r$ so that $\widehat{r} + \bar{r}=r$.
\vspace{2mm}
\STATE   Obtain the \podph\ bases $\widehat{\bV}_{\widehat{r}}$ and $\widehat{\bW}_{\widehat{r}}$
of dimension $\widehat{r}$ using  Algorithm \ref{alg:podph}.
\vspace{2mm}
\STATE  Obtain the \HtwoEpsph\ bases $\bar{\bV}_{\bar{r}}$ and $\bar{\bW}_{\bar{r}}$
of dimension $\bar{r}$ using  Algorithm \ref{alg:qh2ph}.
\vspace{2mm}
\STATE    
Construct $\widetilde{\bV}_r$ of dimension $n\times r$ as the orthonormal basis of the concatenated matrix $[\widehat{\bV}_{\widehat{r}} \ \ \bar{\bV}_{\bar{r}}] \in \mathbb{R}^{n \times r}$\vspace{2mm}
\STATE    
Construct $\widetilde{\bW}_r$ of dimension $n\times r$ as the orthonormal basis of the concatenated matrix $[\widehat{\bW}_{\widehat{r}} \ \ \bar{\bW}_{\bar{r}}] \in \mathbb{R}^{n \times r}$\vspace{2mm}
\STATE   Change bases $\widetilde{\bfW}_r \mapsto \bfW_r$ and $\widetilde{\bfV}_r\mapsto \bfV_r$ such that~$\bfW_r^T\bfV_r=\bfI$.
\vspace{2mm}
\STATE
With $\bfV_r$ and $\bfW_r$ determined in this way, the \textsc{pod}-\HtwoEps~ reduced port-Hamiltonian system is then specified by (\ref{redPHdef}).    
\end{algorithmic}
\end{algorithm}

\subsection{An illustrative example}
\label{sec:MOR_PH_exLadder}
To illustrate the structure-preserving model reduction techniques described in Section~\ref{sec:MOR_Proj_NPH}, we consider an $N$-stage nonlinear ladder network (see Figure \ref{laddnetgraphic}) producing reduced models for three cases of bases:~(i) \textsc{pod} bases (Algorithm \podph), ~(ii) \HtwoEps~bases (Algorithm \HtwoEpsph), and (iii) hybrid \textsc{pod}-\HtwoEps~bases (Algorithm \podHtwoEpsph). 
The system has two inputs and two outputs. The two inputs are a voltage signal applied to the left-hand
terminal pair and a current injection across the right-hand
terminal pair. The symmetrically paired outputs are the
induced current across the left-hand terminal pair and the
induced voltage signal across the right-hand terminal pair.
For simplicity, we assume that each stage of the ladder network
is built from identical components and that the current
injection from the 
right is zero. Resistors and inductors are
assumed to behave linearly. The capacitors 
have a nonlinear C-V characteristic of the form
$
C_k(V)=\frac{C_0 \, V_0}{V_0+V}.
$

\begin{figure}[hhh]
\caption{Ladder network circuit topology (capacitors are nonlinear)}
  \centerline{
   {\includegraphics[width=5.0cm]{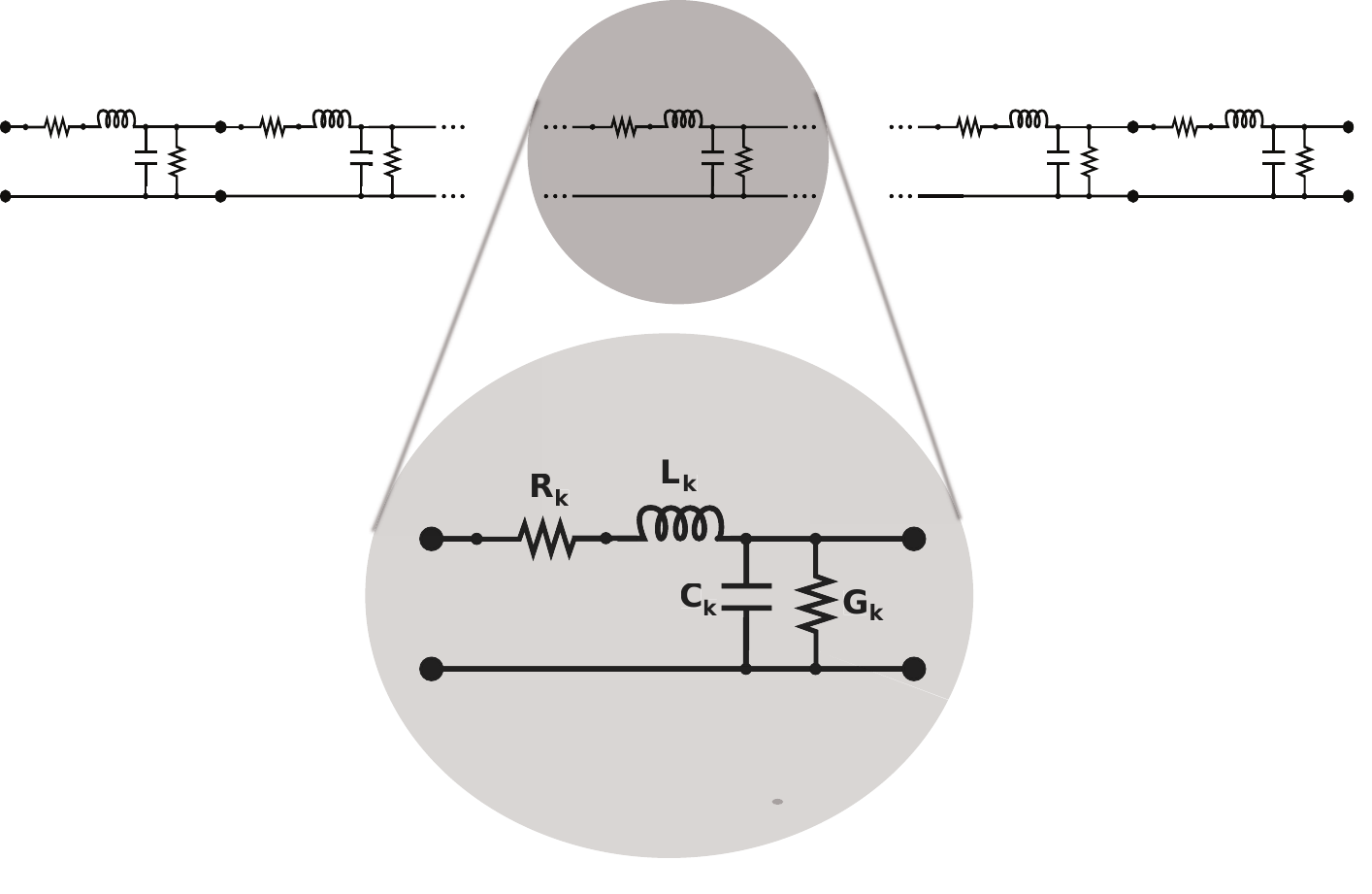}}
   } \label{laddnetgraphic}
 \end{figure}
Inductors and capacitors are evidently the energy storage elements of the circuit, so we take as state variables the magnetic fluxes in the inductors, $\{\phi_k(t)\}_{k=1}^N$,  and the charges on the capacitors, $\{Q_k\}_{k=1}^N$, with labels referring to the Stages $k=1,\ldots, N$, respectively, where they occur.   The energy stored in the Stage $k$ (linear) inductor may be expressed in terms of its magnetic flux as $\frac{1}{2\,L_0}\phi_k^2$.   To determine the energy stored in the nonlinear capacitors, note first that the charge on a capacitor may be expressed as a function of the voltage, $V$, held across the capacitor:  
$$
Q_k(V)=\int_0^{V} C(v)\,dv = C_0\,V_0\, \log\left(1+\frac{V}{V_0}  \right), 
$$
which may be inverted to find 
$$
V_k(Q_k)= V_0\, \left[\exp\left(\frac{Q_k}{C_0V_0}\right)-1\right].
$$
The energy stored in the capacitor at Stage $k$ of the circuit, is then given by 
$$
\int_0^{Q_k} V_k(q)\,dq=C_0V_0^2 \left[\exp\left(\frac{Q_k}{C_0V_0}\right)-1\right]-Q_kV_0,
$$
and the total energy stored in Stage $k$ is then 
$$H^{[k]}(\phi_k,Q_k)=C_0V_0^2 \left[\exp\left(\frac{Q_k}{C_0V_0}\right)-1\right]-Q_kV_0+ \frac{1}{2\,L_0}\phi_k^2.$$  
The Hamiltonian for this system is 
$$H(Q_1,\ldots,Q_N,\phi_1,\ldots,\phi_N)=\sum_{k=1}^N H^{[k]}(\phi_k,Q_k).$$
We order the state variables so that $\bfx=[Q_1,\ldots,Q_N,\phi_1,\ldots,\phi_N]^T$ 
and therefore 
$\displaystyle \bfJ=\left[\begin{array}{cc} \mathbf{0} & \mathsf{S} \\ -\mathsf{S}^T & \mathbf{0} \end{array}\right]$ where 
$\mathsf{S}$ is an upper bidiagonal matrix with $1$ on the diagonal and $-1$  on the superdiagonal; 
$\displaystyle \bfR=\left[\begin{array}{cc} G_0\mathbf{I} & \mathsf{0} \\ \mathsf{0} & R_0\mathbf{I} \end{array}\right]$; and 
$\displaystyle \bfB=\left[\bfe_{N+1},\bfe_{N}\right]$ where $\bfe_k$ denotes the $k^{th}$ column of the identity.
Consider the particular case of a 50-stage ($N= 50$)
circuit with  parameters:
$
L_0=2\mu\mathsf{H}, \quad
V_0=1\mathsf{V}, \quad
R_0=1\Omega, \quad
G_0=10\mu\mho. 
$

We applied a voltage pulse to the left port of the network (Gaussian pulse windowed to 3$\mu$sec with a magnitude of 3V, $\sigma$ of $0.5$)  and observed the output voltage at the right port. 
The output is displayed as a solid green trace in Figure~\ref{fig:out_lin_pod_h2}.  
The induced response of the \emph{linearized} full-order network is also displayed  (green dashed line) for comparison.  Notice that nonlinearity sharpens the peak of the response and significantly reduces dispersion. 
The \textsc{pod} basis sets are generated from 
uniformly sampled snapshots $\bx(t)$ and  $\nabla_\bx H(\bx(t))$
of this full-order system with Gaussian impulse training input. 
These basis sets are then used to construct the \podph~reduced system. In the cases of 
\HtwoEpsph
 bases, the procedure described in Algorithm~\ref{alg:qh2ph} is used.  We also use Algorithm \ref{alg:podh2ph} to generate the \podHtwoEpsph
bases. All three reduced models are then simulated for the same Gaussian impulse training input, which was used for generating the \textsc{pod} snapshot  as well as a different one, a  sinusoidal input.
First we investigate the accuracy of \podph~and \HtwoEpsph~for  $r=6$. 
 Figure~\ref{fig:out_lin_pod_h2} illustrates the two structure-preserving nonlinear reduced models  capture the output of the original nonlinear system very accurately for both types of excitations. 

\begin{figure}[h!]
\caption{\scriptsize 
Ladder network: Time responses of the reduced-order systems from Gaussian pulse (left) and from sinusoidal input(right).
  }
  \centerline{
 \includegraphics[scale=0.32]{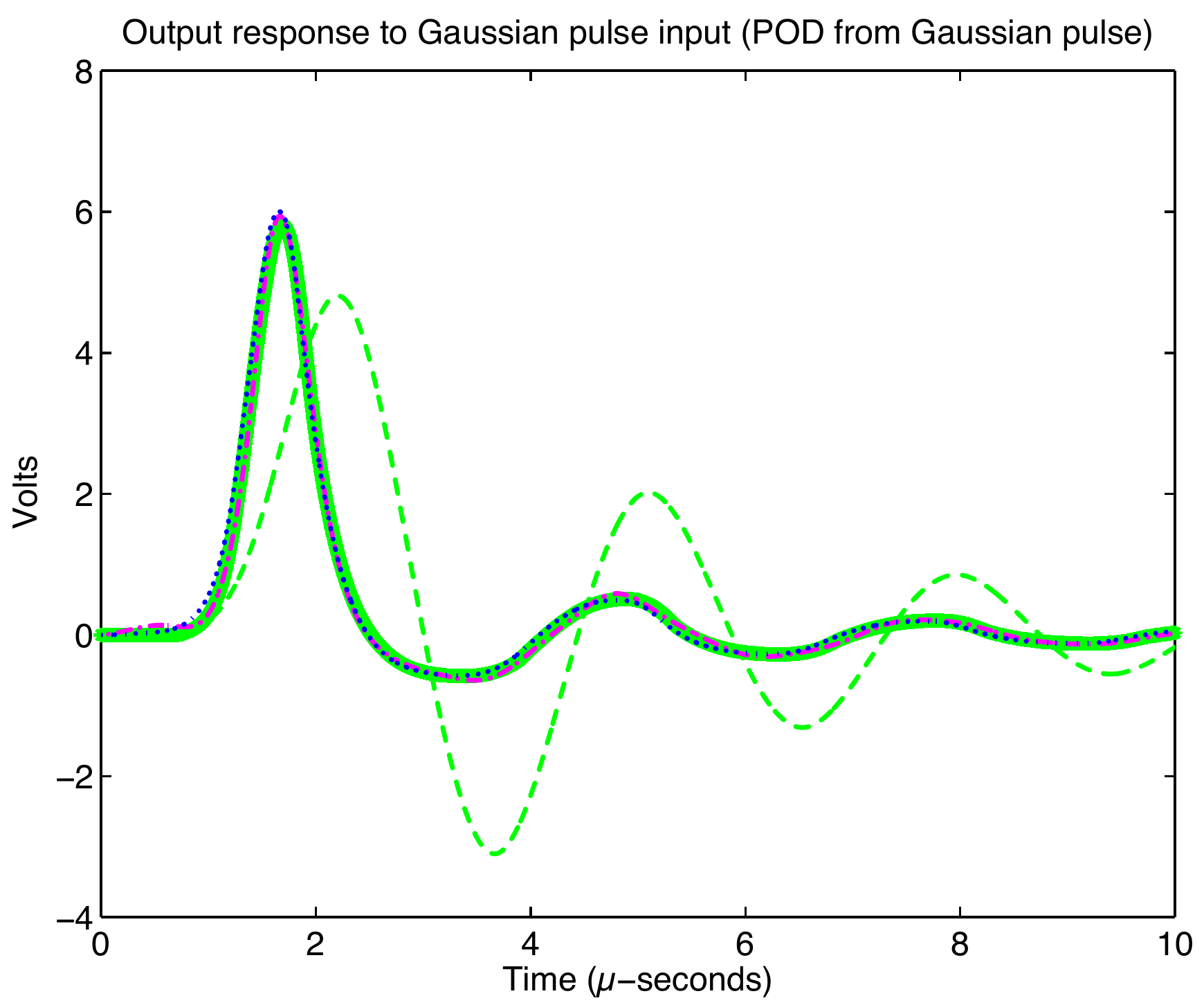}
  \includegraphics[scale=0.321]{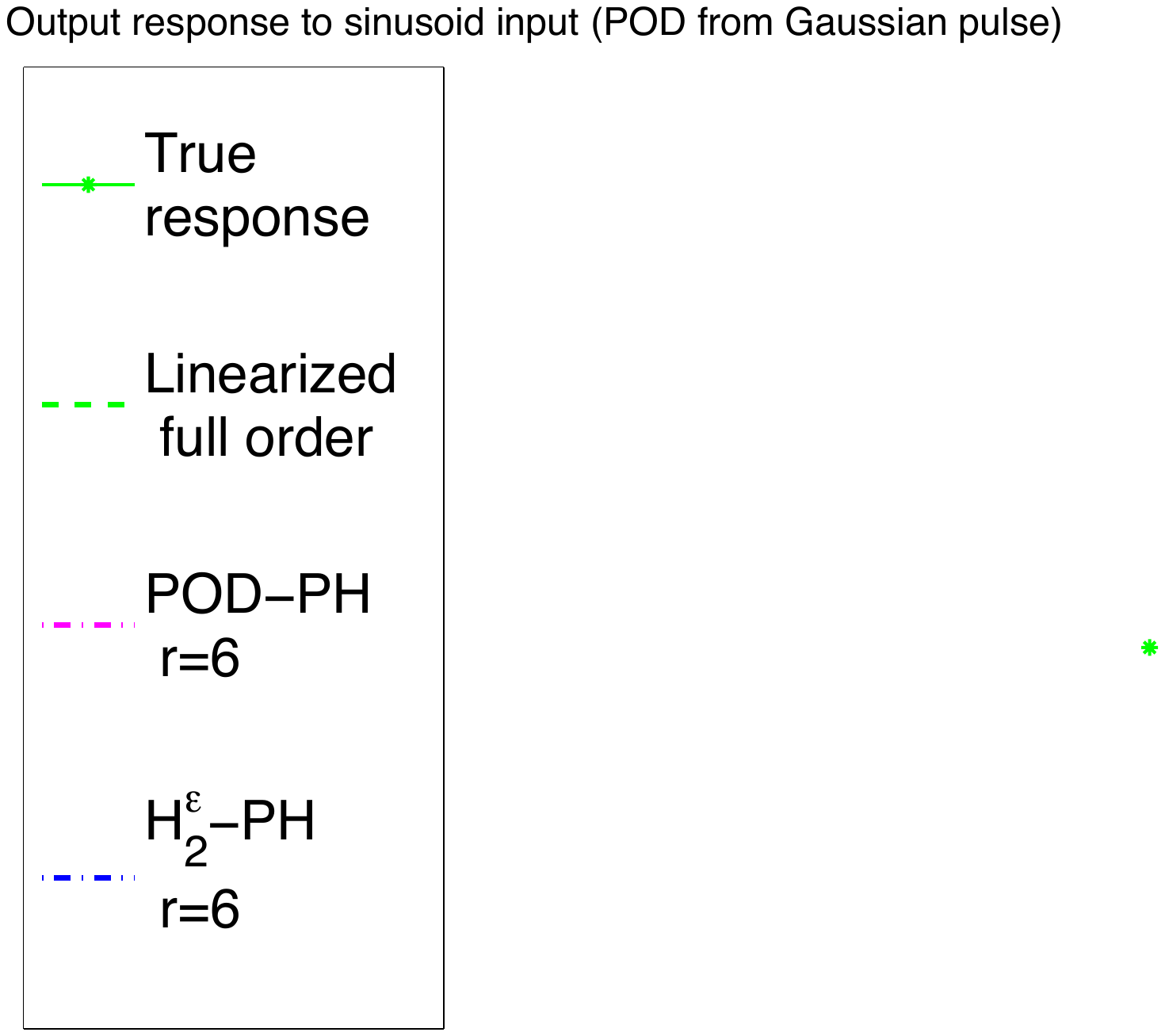}
 \includegraphics[scale=0.32]{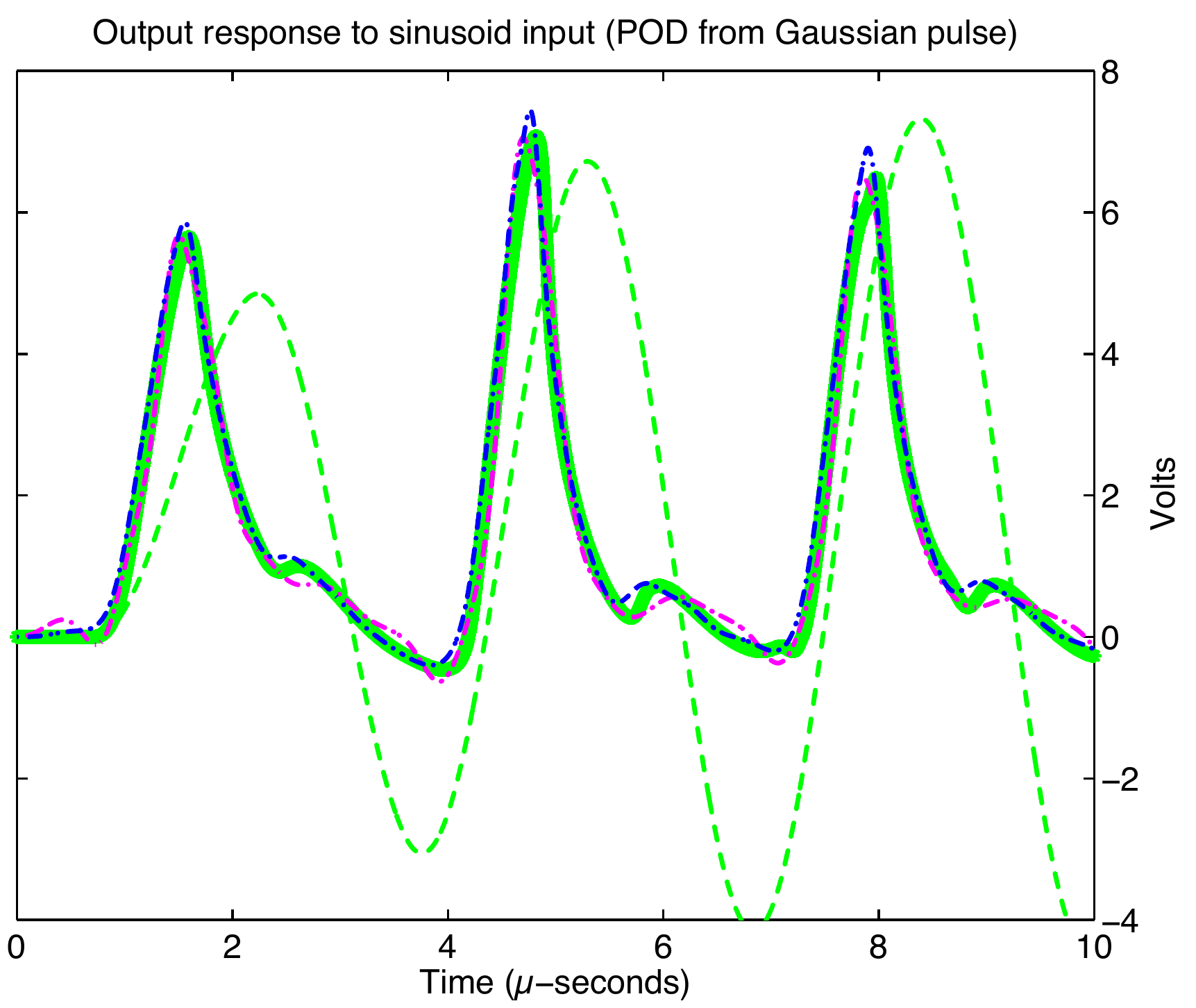}\hspace{-2ex}
 }
\label{fig:out_lin_pod_h2}
\end{figure} 
\begin{figure}[ht!]
\caption{
 \scriptsize 
  Ladder Network:
  Average relative errors of outputs and state variables of structure-preserving reduced systems (\ref{redPHdef})
using hybrid bases with different numbers of POD and \HtwoEps vectors. 
  } 
  \centerline{
 \includegraphics[scale=0.34]{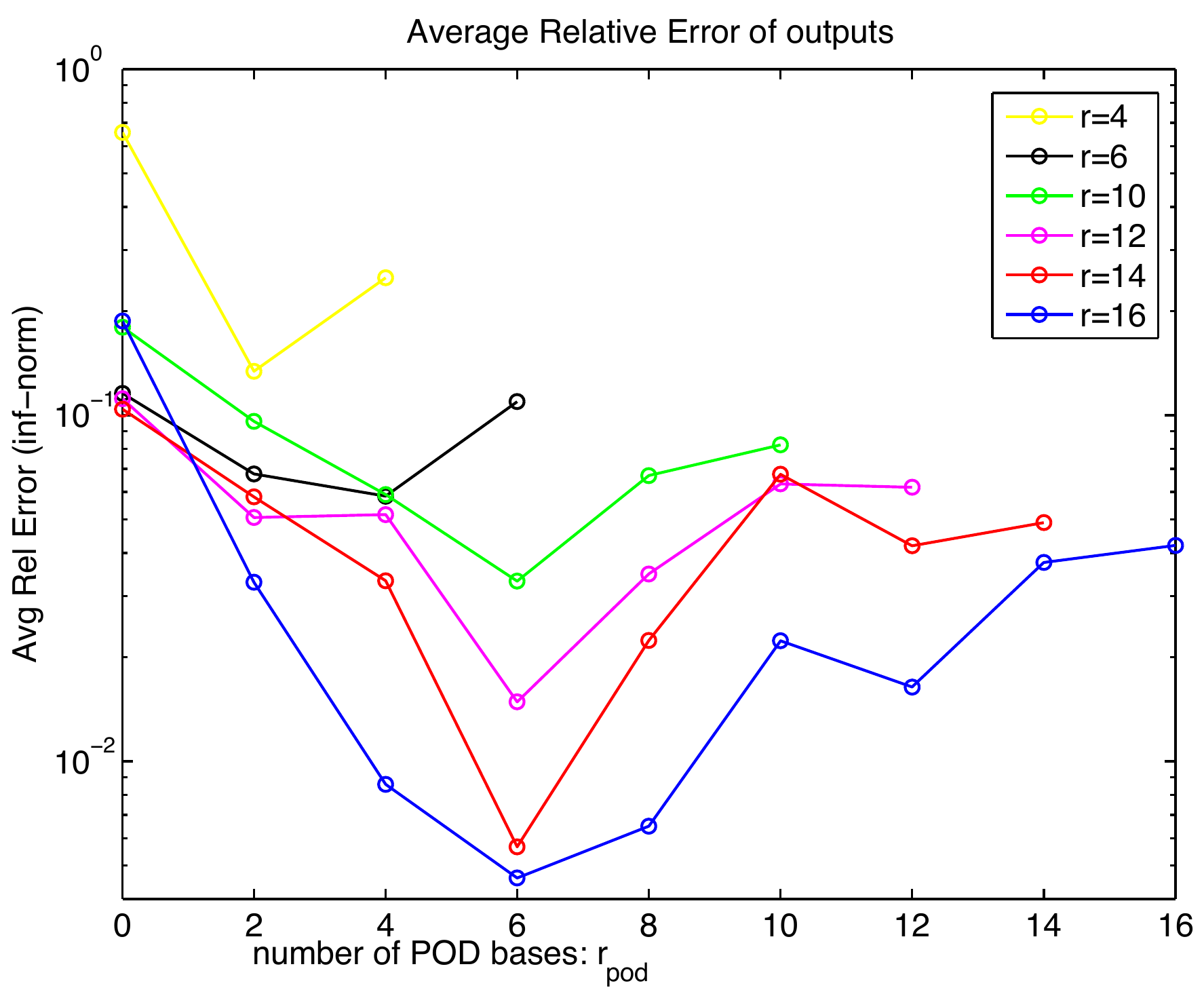}\hspace{1ex}
 \includegraphics[scale=0.34]{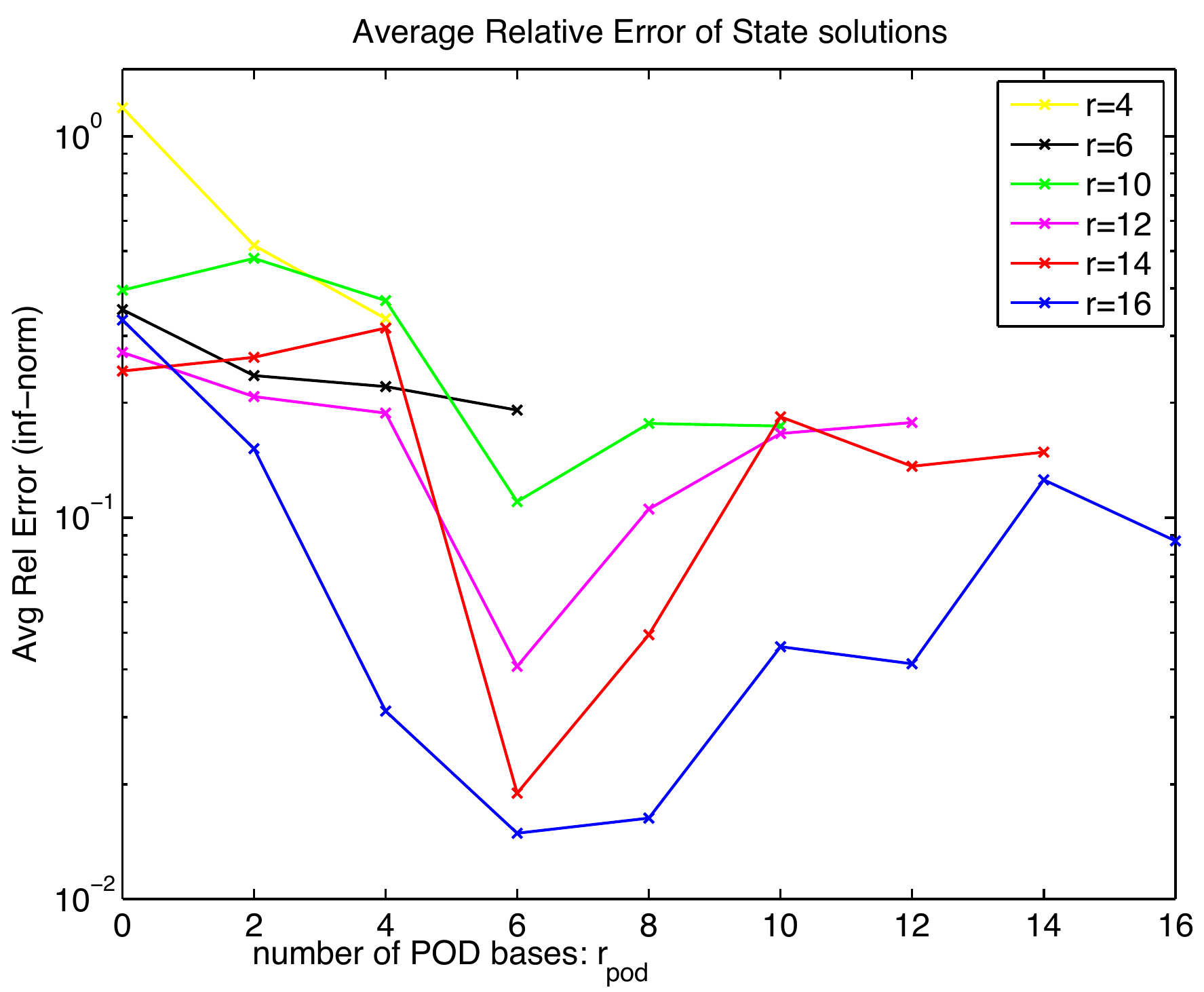}
 }
\label{fig:err_combinePODH2}
\end{figure}
\begin{figure}[h!]
  \caption{
  \scriptsize 
  Ladder Network: 
  Average relative errors of outputs and state variables of reduced 
  systems using  bases from
  (i) \HtwoEps, (ii)  POD  (iii) combination of
  POD and \HtwoEps\ (with same number of basis vectors for both POD and \HtwoEps). 
  The  \podph\ reduced systems are constructed using
   Gaussian impulse training input.
  The top and the bottom plots, respectively, use the  impulse input (same as the training input for POD) and with sinusoidal input (different from the training input for POD). 
   }
  \centerline{
 \includegraphics[scale=0.34]{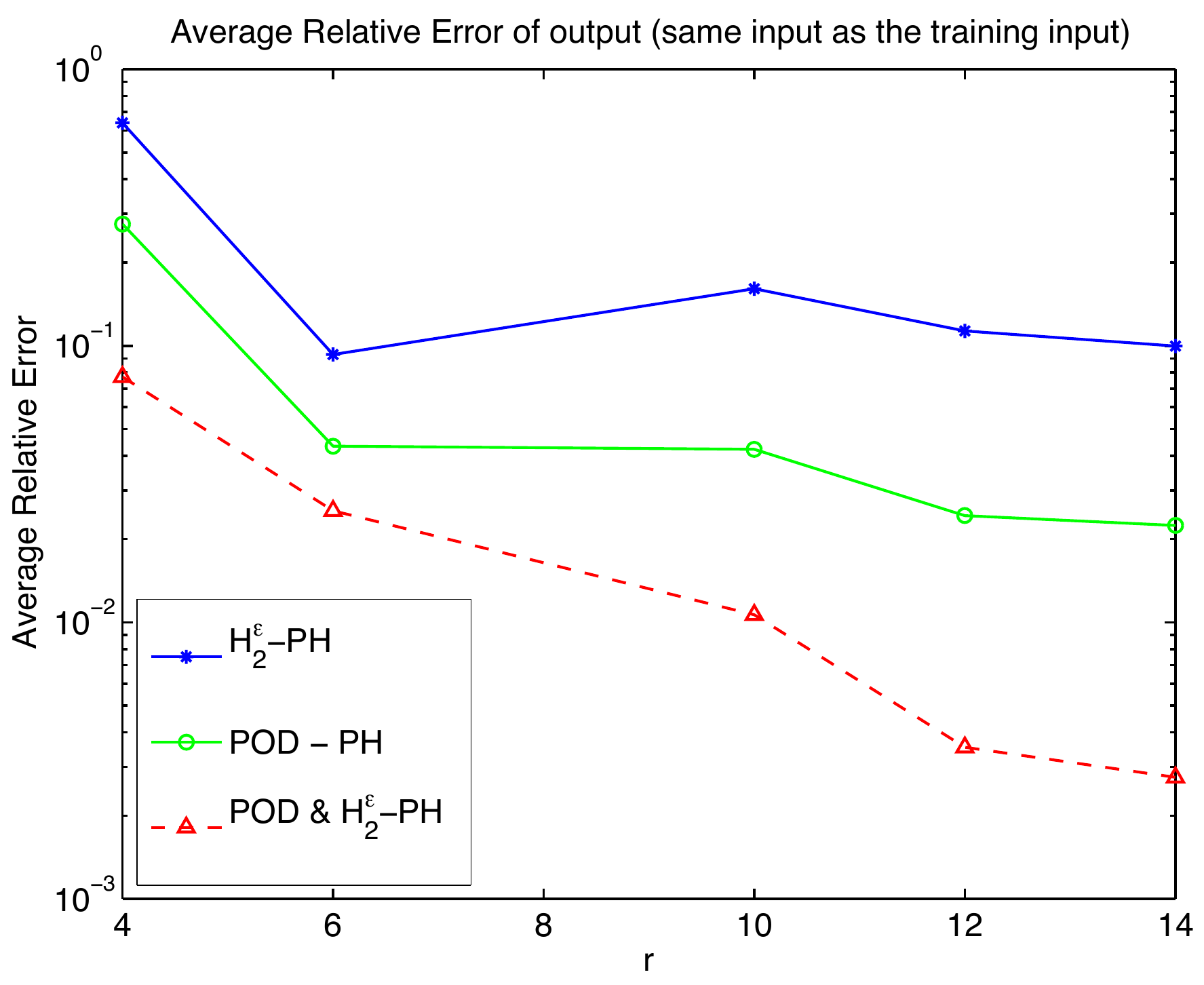}\hspace{1ex}
 \includegraphics[scale=0.34]{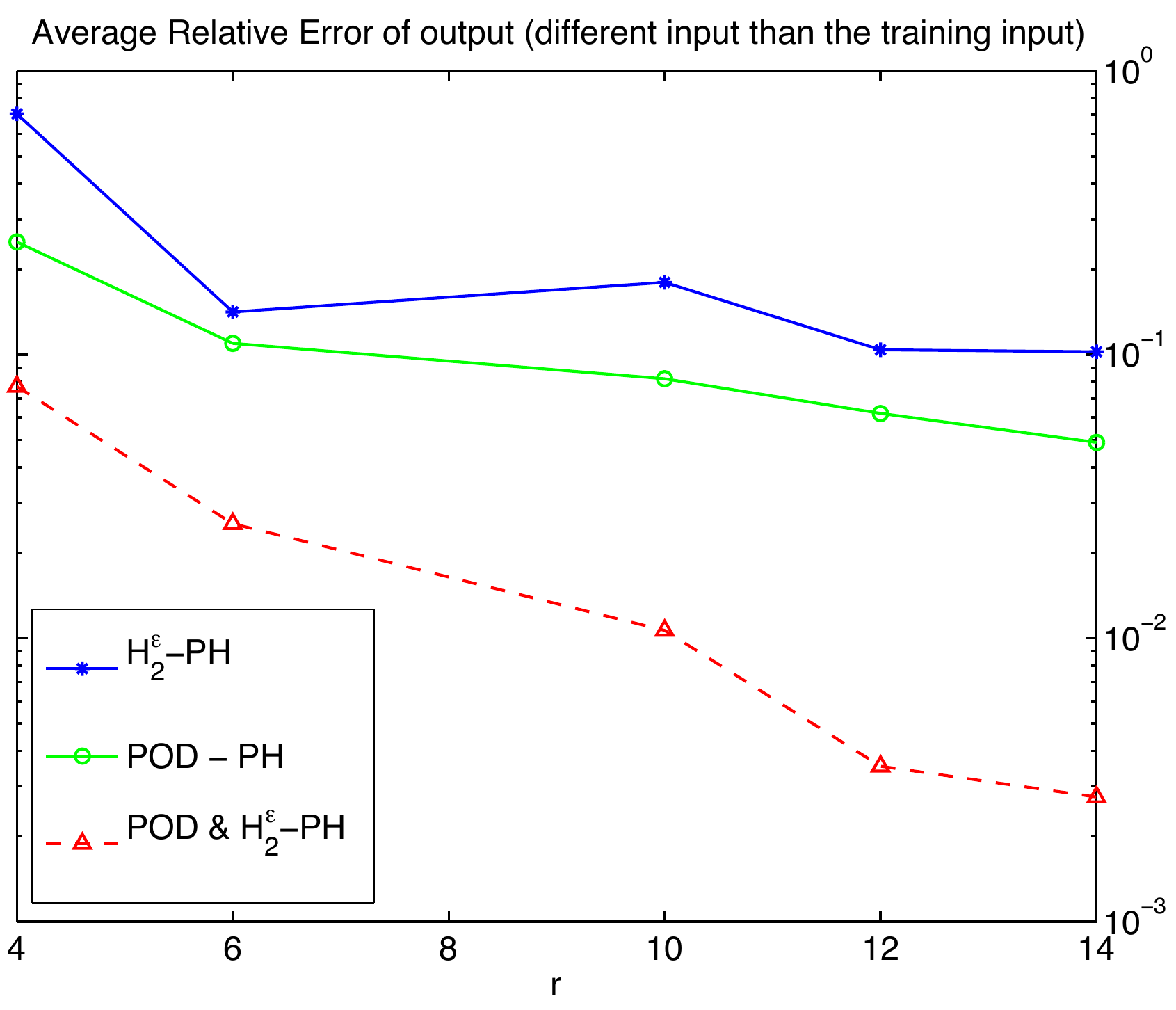}
 }\centerline{
 \includegraphics[scale=0.34]{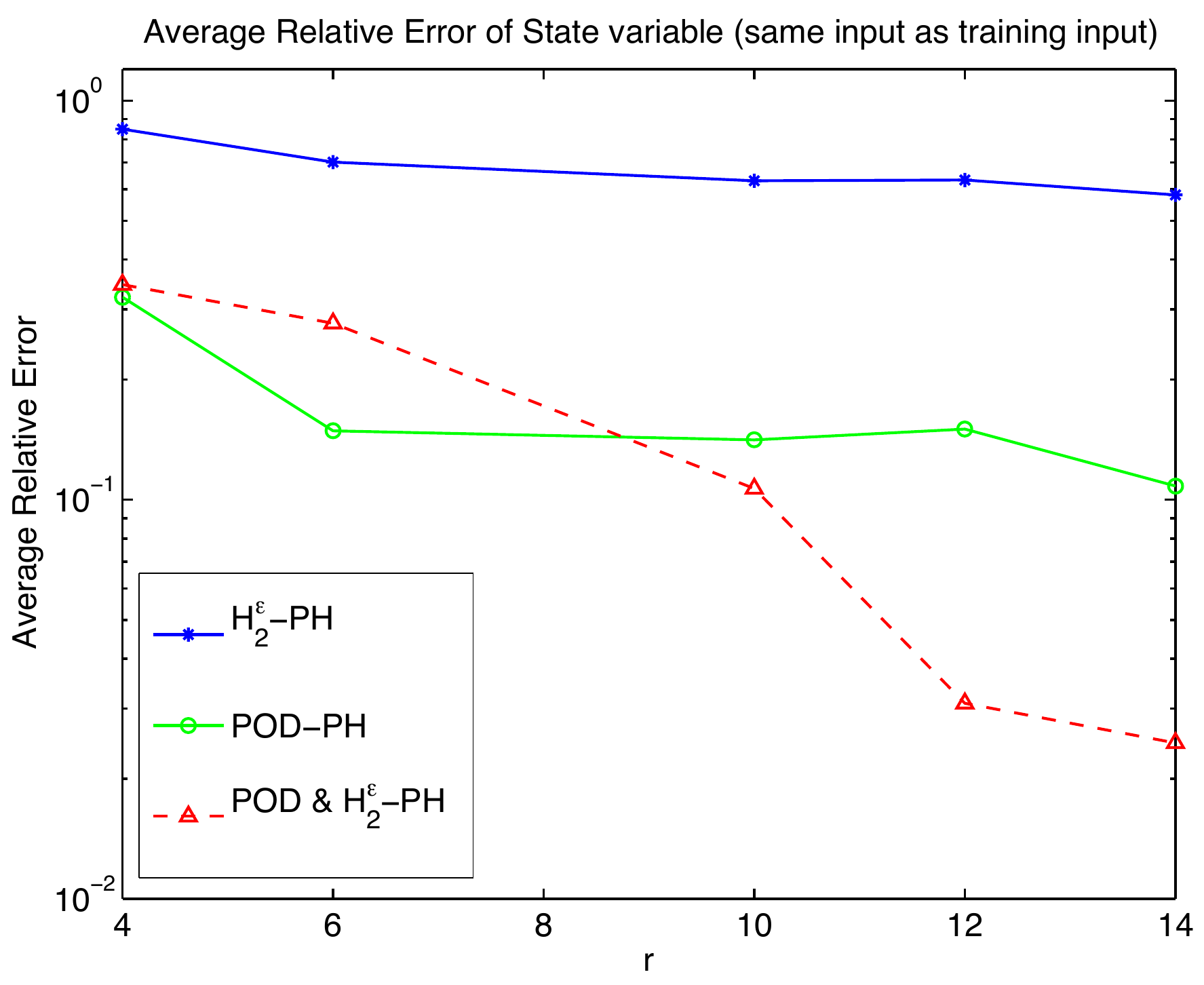}\hspace{1ex}
 \includegraphics[scale=0.34]{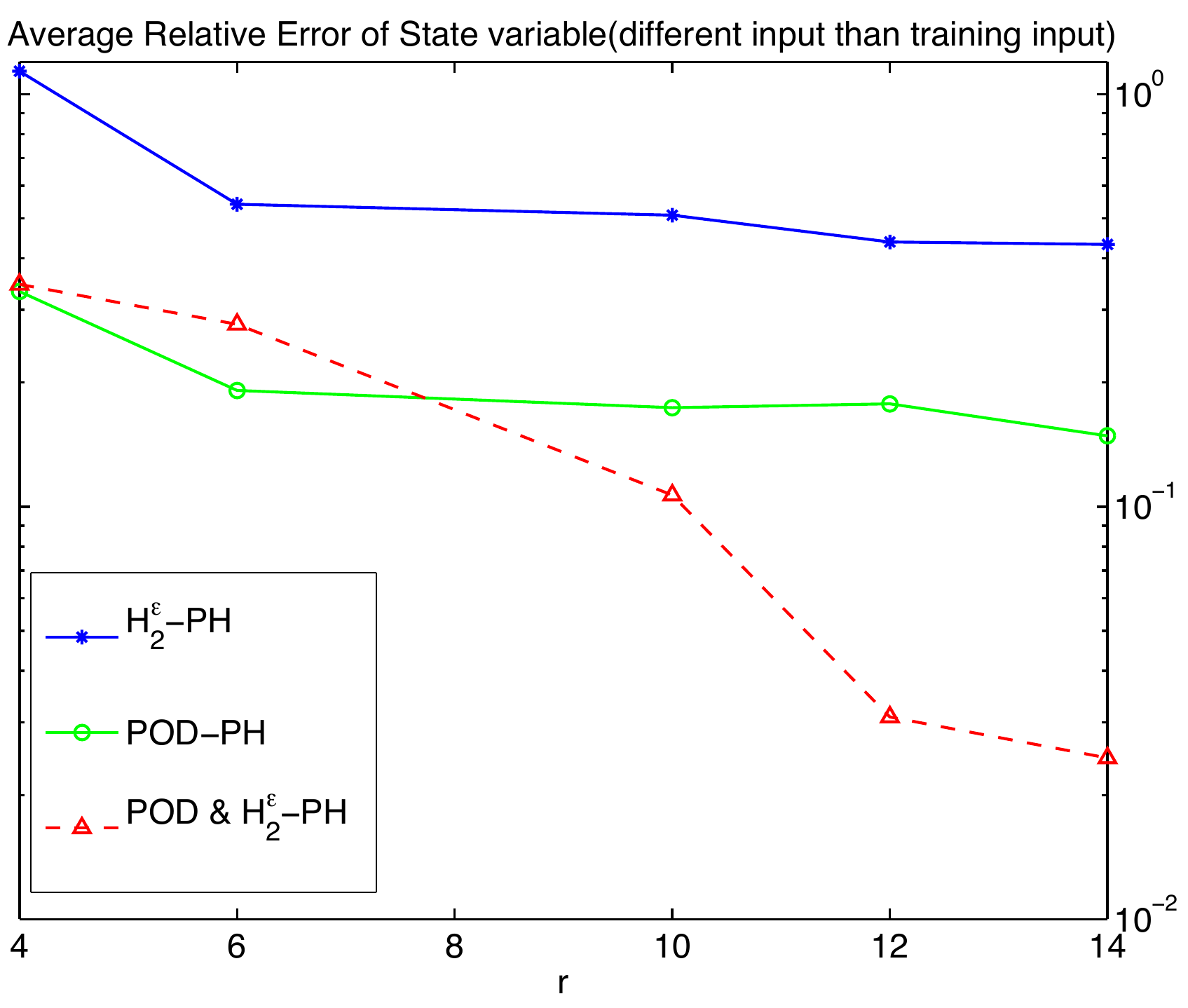}
 }
 \label{fig:errPODnH2ratio1}
\end{figure}
Next, we consider the effect on the accuracy of the state space solutions of using different proportions of \podph~and \HtwoEpsph~basis vectors in the hybrid \podHtwoEpsph~basis.  Results for a wide range of 
$r$ values are  illustrated in Figure~\ref{fig:err_combinePODH2}, depicting the relative error  in the output. In these figures, the colors correspond to a fixed order $r$. The $x$-axis is the number of \podph~basis vectors for that given order, so e.g., the blue line corresponds to relative error for $r=16$. For that line, the value corresponding  to $x=12$ means that for the  $r=16$ model, $\hat{r} =12$ \podph~basis vectors are combined with $\bar{r} = r-\hat{r} = 4$ \HtwoEpsph~basis vectors.
The most accurate approximation resulted from reduced systems
with bases that combined roughly equal numbers of \podph~and \HtwoEpsph~basis vectors.
 Figure~\ref{fig:errPODnH2ratio1} shows that reduced systems constructed from these hybrid bases (using equal numbers of  \podph~and \HtwoEpsph~vectors) can give much more accurate approximations than 
those constructed with  \podph~alone or   
 \HtwoEpsph~alone. 
For $r=12$, say, a reduced system with a hybrid basis using $\hat{r} = \bar{r} = 6$   (red dashed line) 
produced roughly $10$ times smaller error 
for both state variables and outputs 
than those using only \podph~(green solid lines) or \HtwoEpsph~bases  (blue solid lines). 
%
\subsection{An \emph{a priori} error bound for NLPH-reduced models}
\label{sec:MOR_Proj_NPHerror}

Fix a reduction order, $r$, and let $\cV_r = \mathsf{Ran}(\bV_r)$ and  $\cW_r = \mathsf{Ran}(\bW_r)$ denote $r$-dimensional reduction subspaces used in creating reduced \textsc{nlph} systems as in (\ref{redPHdef}).     
We provide an error analysis here that bounds the deviation between the true state trajectory of an \textsc{nlph} system and that provided by a reduced \textsc{nlph} system of the form given  in  (\ref{redPHdef}).  This leads in turn to a  bound on the error between the true system output and the reduced \textsc{nlph} system output.  
Typically, the reduction subspaces $\cV_r$ and $\cW_r$ will be chosen consistently with the heuristics of (\ref{GoodSpace}) but the bounds we derive apply more generally. 

Let $\bfQ\in\real^{n\times n}$ be a symmetric positive-definite matrix and define a weighted inner product on $\real^n$ as
$\langle \bx , \bz \rangle_{\bfQ} = \bx^T \bfQ \bz$, with a related norm, $\|\bx\|_ {\bfQ} =\sqrt{\langle \bx , \bx \rangle_{\bfQ}}$.
We leave the choice of $\bfQ$ open for the time being, however the choices $\bfQ=\bfI$ and $\bfQ=\nabla^2 H (\bx_0)$ at a locally stable equilibrium point $\bx_0$ will have particular merit. 

For a mapping $\bF: \real^{n} \to \real^{n}$, we define the associated \emph{Lipschitz constant} and \emph{logarithmic Lipschitz constant}
of $\bF$ relative to $\bfQ$ (see \cite{Soderlind2006} ) as
\begin{equation} \label{def:LipClogLipC}
L_{\bfQ} [\bF] =  \sup_{\bu \ne \bv} \frac{\| \bF(\bu) - \bF(\bv)\|_{\bfQ} }{ \| \bu - \bv \|_{\bfQ} },
\quad\mbox{and}\quad
\cL_{\bfQ} [\bF] =  \sup_{\bu \ne \bv} \frac{\langle \bu - \bv, \bF(\bu) - \bF(\bv)\rangle_{\bfQ} }{ \| \bu - \bv \|^2_{\bfQ} }
\end{equation}
respectively.   Note that $\cL_{\bfQ} [\bF]$ could be negative and $-L_{\bfQ} [\bF]\leq \cL_{\bfQ} [\bF] \leq L_{\bfQ} [\bF]$.

 Suppose $\bfQ$-orthogonal bases for $\cV_r$ and $\cW_r$ are chosen:  
 $\widetilde{\bV}_r, \widetilde{\bW}_r \in \real^{n \times r}$ such that
 $\cV_r = \mathsf{Ran}(\widetilde{\bV}_r)$ and $\cW_r  = \mathsf{Ran}(\widetilde{\bW}_r)$, 
 with
 $\widetilde{\bV}_r^T{\bfQ}\widetilde{\bV}_r = \bI$ and
 $\widetilde{\bW}_r^T{\bfQ}\widetilde{\bW}_r = \bI$.
Consider $\bfQ$-orthogonal projectors 
$\bPi_\cV: \real^{n} \to\cV_r$, 
$\bPi_\cW: \real^{n} \to \cW_r$ 
defined by 
$\bPi_\cV=  \widetilde{\bV}_r \widetilde{\bV}_r^T{\bfQ}$ and
$\bPi_\cW= \widetilde{\bW}_r \widetilde{\bW}_r^T{\bfQ}$. 
Define an ancillary state space projection as $\cbfP_r= \bV_r \bW_r^T$.
For a given (true) system trajectory, $\bx(t)$, the best approximation (relative to the $\bfQ$-norm) that is available by a path in 
$\cV_r$ is given by $\bPi_\cV \bx(t)$, and so the state space error $\bx(t) - \bV_r\bx_r(t)$ is bounded pointwise below as
$$
\|(\bI -\bPi_\cV) \bx(t)\|_{\bfQ}\leq \|\bx(t) - \bV_r \bx_r(t)\|_{\bfQ} 
$$
for all $t\geq 0$.  The error associated with the reduced internal force, $\nabla_{\bx_r} H_r$, is bounded similarly
$$
\|(\bI -\bPi_\cW) \nabla_\bx H(\bx(t)) \|_{\bfQ}\leq \|\nabla_\bx H(\bx(t)) - \bW_r \nabla_{\bx_r} H_r(\bx_r(t))\|_{\bfQ}.
$$
Let the optimal state-space and internal force residual vectors be defined as 
$$
\bfeps_\bx(t) = (\bI -\bPi_\cV) \bx(t)\quad\mbox{and}\quad \bfeps_\bF(t) =(\bI -\bPi_\cW) \nabla_\bx H(\bx(t)).
$$ 
The squared residual state-space and internal force errors integrated over $[0, T]$ will be denoted as  
 $$
\cE_\bx = \int_{0}^{T} \|\bfeps_\bx(t)\|^2_{\bfQ}\, dt = \int_{0}^{T} \|(\bI -\bPi_\cV) \bx(t)\|^2_{\bfQ}\, dt
$$ 
and 
$$
\cE_\bF = \int_{0}^{T} \|   \bfeps_\bF(t) \|^2_{\bfQ}\, dt = \int_{0}^{T} \|  (\bI - \bPi_\cW) \nabla_\bx H(\bx(t)) \|^2_{\bfQ}\, dt.
$$

We seek to bound the state space error, $\bx - \bV_r \bx_r$, and output error, $\by-\by_r$, in terms of  
$\cE_\bx $, $\cE_\bF$, and the deviation between the projected and reduced initial condition.  
Note that when the reduction spaces, $\cV_r$ and $\cW_r$, are generated via a \textsc{pod} approach 
(e.g., Algorithm \ref{alg:podph}) 
 then the  aggregate errors $\cE_\bx $, $\cE_\bF$ are approximately minimized and
can be expressed directly as the sum of neglected singular values of the
snapshot matrix (either of  $\bx(t)$ or of $\nabla_\bx H(\bx(t))$.  
As we have seen in \S \ref{sec:MOR_PH_exLadder}, the actual state space error or output error 
might not be minimized with this choice. 
\begin{theorem}
\label{thm:errBd_MORstr}
Suppose $\bfQ\in \real^{n \times n}$ is symmetric positive definite and that a reduced port-Hamiltonian system as in (\ref{redPHdef})
is constructed to approximate the full order \textsc{nlph} system (\ref{PHdef}) 
using reduction bases $\bV_r$, $\bW_r \in \real^{n \times r}$ that are defined so that 
$\bV_r^T{\bfQ}\bV_r = \bI_r$ and $\bV_r^T\bW_r = \bI_r$. (Note that it may or may not be the case that $\bW_r={\bfQ}\bV_r$.)
Suppose further that  $\bfF(\bx)=\nabla_\bx H(\bx)$  in (\ref{PHdef}) is Lipschitz continuous.   
Denote $\cbfA=\bJ-\bR$ and $\cbfP_r=\bV_r\bW_r^T $.  Then
\begin{eqnarray}
\int_{0}^{T}\left\|\bx(t) - \bV_r\bx_r(t)\right\|^2_{\bfQ} dt
& \le &  
        C_\bx \, \cE_\bx
    +   C_\bF\,  \cE_\bF
    + C_0 \mbox{\small $\| \bW_r^T\bx(0)-\bx_r(0)\|^2$ }
\label{eqn:errBd_x1}
\\
  \int_{0}^{T} \left\| \by(t) - \by_r(t) \right\|^2 dt
    & \le &
     \widehat{C}_\bx  \cE_\bx
    +  \widehat{C}_\bF \cE_\bF
        + \widehat{C}_0 \mbox{\small $\| \bW_r^T\bx(0)-\bx_r(0)\|^2$ }
\label{eqn:errBd_y1}
\end{eqnarray}
where    
$
\begin{array}{ccc}
\alpha  = \cL_{\bfQ}[\cbfP_r\cbfA \cbfP_r^T\bF], &
\beta=   \|\cbfP_r \cbfA\|_{\bfQ}\|\cbfP_r^T\|_{\bfQ}, &
 \gamma =  L_{\bfQ} [\bF]\  \|\cbfP_r\|_{\bfQ},
 \end{array}
 $
 $$
\begin{array}{ccc}
\delta = 2 \|\bB^T{\bfQ}^{-1}\bB\| \, \|\cbfP_r^T\|_{\bfQ}^2,  &
c_\alpha(t) =\int_0^t e^{2\alpha \tau}\, d\tau, & 
     C_\alpha(t) = \int_0^t c_\alpha(\tau)\, d\tau,
             \end{array}  
 $$
{\small
$$
\begin{array}{ccc}   
C_\bx =  (2\beta\gamma)^2\,C_\alpha(T)  + 2\left\|\cbfP_r \right\|^2_{\bfQ},  &
 C_\bF =  (2\beta)^2\,C_\alpha(T), &
 C_{0} = 2\,c_\alpha(T), \mbox{ and} 
 \end{array}
 $$
\[
\begin{array}{ccc}
 \widehat{C}_\bx  = \delta \cdot L_{\bfQ} [\bF]^2 C_\bx,  &
\widehat{C}_\bF  =   \delta  \cdot (1+ L_{\bfQ} [\bF]^2 C_\bF), &
 \widehat{C}_{0} =  \delta \cdot L_{\bfQ} [\bF]^2 C_0 . 
\end{array}
\] 
}
\end{theorem}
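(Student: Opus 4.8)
The plan is to recast the reduced system as a dynamical system living in the ambient space $\real^{n}$, compare the lifted reduced trajectory against a carefully chosen projection of the true trajectory, and then control the in-subspace discrepancy by an energy estimate driven by the \emph{logarithmic} Lipschitz constant. First I would lift the reduced model: using $\nabla_{\bfx_r}H_r(\bfx_r)=\bfV_r^T\bfF(\bfV_r\bfx_r)$ together with $\bfJ_r-\bfR_r=\bfW_r^T\cbfA\bfW_r$ and $\bfB_r=\bfW_r^T\bfB$, multiply the reduced state equation by $\bfV_r$ to get $\tfrac{d}{dt}(\bfV_r\bfx_r)=\cbfP_r\cbfA\cbfP_r^T\bfF(\bfV_r\bfx_r)+\cbfP_r\bfB\,\bfu$, where $\cbfP_r=\bfV_r\bfW_r^T$ is idempotent because $\bfW_r^T\bfV_r=\bfI_r$. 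Writing $\widehat{\bfx}=\bfV_r\bfx_r$ and $\bfG(\bfz)=\cbfP_r\cbfA\cbfP_r^T\bfF(\bfz)$, the map $\bfG$ is exactly the one whose $\bfQ$-logarithmic Lipschitz constant is $\alpha$.

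The crucial modeling choice is the reference trajectory. I would take $\bfxi=\cbfP_r\bfx$, the \emph{oblique} projection of the true state, precisely so that the forcing cancels: the true trajectory satisfies $\dot{\bfxi}=\cbfP_r\cbfA\bfF(\bfx)+\cbfP_r\bfB\,\bfu$, matching the input term $\cbfP_r\bfB\,\bfu$ in the lifted reduced equation. Then $\bfe=\widehat{\bfx}-\bfxi$ obeys $\dot{\bfe}=\bfG(\widehat{\bfx})-\bfG(\bfxi)-\bfr$, with residual $\bfr=\cbfP_r\cbfA\bigl[\bfF(\bfx)-\cbfP_r^T\bfF(\bfxi)\bigr]$. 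Differentiating $\|\bfe\|_{\bfQ}$ and applying the definition of $\cL_{\bfQ}$ gives the one-sided inequality $\tfrac{d}{dt}\|\bfe\|_{\bfQ}\le\alpha\|\bfe\|_{\bfQ}+\|\bfr\|_{\bfQ}$; Gr\"onwall, squaring with $(a+b)^2\le 2a^2+2b^2$, and Cauchy--Schwarz on the convolution then yield $\int_0^T\|\bfe\|_{\bfQ}^2\,dt\le 2c_\alpha(T)\,\|\bfe(0)\|_{\bfQ}^2+2C_\alpha(T)\int_0^T\|\bfr\|_{\bfQ}^2\,dt$, where one checks $\|\bfe(0)\|_{\bfQ}^2=\|\bfW_r^T\bfx(0)-\bfx_r(0)\|^2$ using $\bfV_r^T\bfQ\bfV_r=\bfI_r$.

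Next I would bound the residual in terms of the computable quantities $\cE_\bx$ and $\cE_\bF$. Split $\bfF(\bfx)-\cbfP_r^T\bfF(\bfxi)=(\bfI-\cbfP_r^T)\bfeps_\bF+\cbfP_r^T\bigl(\bfF(\bfx)-\bfF(\bfxi)\bigr)$, exploiting that both $\cbfP_r^T=\bfW_r\bfV_r^T$ and the $\bfQ$-orthogonal projector $\bPi_\cW$ have range $\cW_r$, so $(\bfI-\cbfP_r^T)\bfF(\bfx)=(\bfI-\cbfP_r^T)\bfeps_\bF$; similarly $\bfx-\bfxi=(\bfI-\cbfP_r)\bfeps_\bx$ since both $\cbfP_r$ and $\bPi_\cV$ project onto $\cV_r$. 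Invoking Lipschitz continuity of $\bfF$ and the identity $\|\bfI-\cbfP_r\|_{\bfQ}=\|\cbfP_r\|_{\bfQ}$ (equal induced norms of a nontrivial idempotent and its complement, via the similarity $\bfQ^{1/2}\cbfP_r\bfQ^{-1/2}$) produces $\|\bfr\|_{\bfQ}\le\beta\|\bfeps_\bF\|_{\bfQ}+\beta\gamma\|\bfeps_\bx\|_{\bfQ}$, hence $\int_0^T\|\bfr\|_{\bfQ}^2\,dt\le 2\beta^2\cE_\bF+2\beta^2\gamma^2\cE_\bx$. Feeding this into the Gr\"onwall estimate gives the $(2\beta)^2C_\alpha\cE_\bF$, $(2\beta\gamma)^2C_\alpha\cE_\bx$ and $2c_\alpha\,\|\bfW_r^T\bfx(0)-\bfx_r(0)\|^2$ terms; the remaining $2\|\cbfP_r\|^2_{\bfQ}\cE_\bx$ comes from the orthogonal decomposition $\bfx-\widehat{\bfx}=\bfeps_\bx+(\bPi_\cV\bfx-\widehat{\bfx})$ (Pythagoras in the $\bfQ$-norm), whose in-subspace part $\bPi_\cV\bfx-\widehat{\bfx}=-\cbfP_r\bfeps_\bx-\bfe$ is controlled by $\bfe$ and $\cbfP_r\bfeps_\bx$. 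Assembling these yields the state bound (\ref{eqn:errBd_x1}).

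For the output bound (\ref{eqn:errBd_y1}), write $\by-\by_r=\bfB^T\bigl[\bfF(\bfx)-\cbfP_r^T\bfF(\widehat{\bfx})\bigr]$, pass from the Euclidean to the $\bfQ$-norm through $\|\bfB^T\bfw\|^2\le\|\bfB^T\bfQ^{-1}\bfB\|\,\|\bfw\|^2_{\bfQ}$, decompose $\bfw$ as in the residual step, and substitute the already-established state bound; this produces $\delta=2\|\bfB^T\bfQ^{-1}\bfB\|\,\|\cbfP_r^T\|^2_{\bfQ}$ and the hatted constants $\widehat{C}_\bx,\widehat{C}_\bF,\widehat{C}_0$ directly. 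I expect the main obstacle to be conceptual rather than computational: one must resist bounding the error with the ordinary Lipschitz constant $L_{\bfQ}[\bfG]$ and instead use the logarithmic Lipschitz constant $\alpha=\cL_{\bfQ}[\cbfP_r\cbfA\cbfP_r^T\bfF]$, which can be negative and thereby encodes the dissipative structure that keeps $c_\alpha(T),C_\alpha(T)$ from blowing up; and one must choose the oblique reference $\cbfP_r\bfx$ (not the $\bfQ$-orthogonal projection) so the input cancels, while still using the $\bfQ$-orthogonal projection for the final Pythagorean split. The two projector-norm identities are what let the otherwise oblique geometry collapse into the clean computable residual errors $\cE_\bx,\cE_\bF$.
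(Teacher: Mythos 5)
Your proposal is correct and follows essentially the same route as the paper's proof: your lifted error $\bfe=\bV_r\bx_r-\cbfP_r\bx$ is exactly $-\bV_r\btheta$ where $\btheta=\bW_r^T\bx-\bx_r$ is the paper's reduced-coordinate error (with $\|\bfe\|_{\bfQ}=\|\btheta\|$ since $\bV_r^T\bfQ\bV_r=\bI_r$), your residual $\bfr$ is the lift $\bV_r\bfeta$ of the paper's perturbation term $\bfeta$, and the logarithmic-Lipschitz differential inequality, the residual bound via the identities $(\bI-\cbfP_r)\bx=(\bI-\cbfP_r)\bfeps_\bx$, $(\bI-\cbfP_r^T)\bF=(\bI-\cbfP_r^T)\bfeps_\bF$ and $\|\bI-\cbfP_r\|_{\bfQ}=\|\cbfP_r\|_{\bfQ}$, the Gr\"onwall/Cauchy--Schwarz step producing $c_\alpha$ and $C_\alpha$, and the output argument all coincide with the paper's (indeed you make explicit the Gr\"onwall bookkeeping that the paper leaves implicit). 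One small discrepancy: your final Pythagorean assembly $\bx-\bV_r\bx_r=\bfeps_\bx+(\bPi_\cV\bx-\bV_r\bx_r)$ with $\bPi_\cV\bx-\bV_r\bx_r=-\cbfP_r\bfeps_\bx-\bfe$ yields the coefficient $1+2\|\cbfP_r\|_{\bfQ}^2$ on $\cE_\bx$, slightly larger than the stated $2\|\cbfP_r\|_{\bfQ}^2$. To recover the theorem's constant exactly, use instead the direct split $\bx-\bV_r\bx_r=(\bx-\cbfP_r\bx)-\bfe=(\bI-\cbfP_r)\bfeps_\bx-\bfe$ (the paper's $\bfrho+\bV_r\btheta$), which gives $\int_0^T\|\bx-\bV_r\bx_r\|_{\bfQ}^2\,dt\le 2\|\cbfP_r\|_{\bfQ}^2\,\cE_\bx+2\int_0^T\|\bfe(t)\|_{\bfQ}^2\,dt$ at once.
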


\begin{proof}
First note that  
\begin{eqnarray}  \label{note:LS2err_x}   
 (\bI - \cbfP_r) \bx(t) 
 & =  &  (\bI - \cbfP_r) (\bI -\bPi_\cV) \bx(t)  
   = (\bI - \cbfP_r) \bfeps_\bx(t) \quad \mbox{and}
   \\[2mm]
 (\bI - \cbfP_r^T)\bF(t) 
 & = &  (\bI - \ \cbfP_r^T) (\bI - \bPi_\cW)\bF(t)
   =  (\bI - \cbfP_r^T)\bfeps_\bF(t).
\label{note:LS2err_f}
\end{eqnarray}

The state space error can be separated into the sum of a component in $\mathsf{Ker}(\cbfP_r)$ (i.e.,
orthogonal to $\cW_r$) and a component contained in $\mathsf{Ran}(\cbfP_r)=\cV_r$:
\[
\bx(t) - \bV_r \bx_r(t) = \bfrho(t) + \bV_r\btheta(t),
\]
where
$ \bfrho(t)=  \bx(t) -  \bV_r\bW_r^T\bx(t)= (\bI - \cbfP_r)\bfeps_\bx(t)\in\cW_r^\perp$ 
and
$\btheta(t) = \bW_r^T\bx(t)-\bx_r(t)$.
Then 
\begin{equation} \label{baseEstimate}
\int_0^T \|\bx(t) - \bV_r \bx_r(t)\|_{\bfQ}^2\, dt \leq 
2\int_0^T \| \bfrho(t)\|_{\bfQ}^2\, dt + 2\int_0^T \|\btheta(t)\|^2\, dt.
\end{equation}

For the first term, we may estimate immediately
$$
\int_0^T \| \bfrho(t)\|_{\bfQ}^2\, dt  \leq 
\int_0^T \| (\bI - \cbfP_r)\bfeps_\bx(t) \|_{\bfQ}^2\, dt \leq \|\cbfP_r \|_{\bfQ}^2 \ \cE_\bx
$$
where we have made use of the identity, $ \| \bI - \cbfP_r \|_{\bfQ} = \| \cbfP_r \|_{\bfQ}$ (see \cite{Szyld.06}).

To bound the second term of (\ref{baseEstimate}), note that
\begin{eqnarray*}
  \dot{\btheta}(t)
  & = & \bW_r^T\dot{\bx}(t) - \dot{\bx}_r(t)  
  =  \bA_r \bV_r^T\left[\bF(\bV_r\bW_r^T\bx(t)) -  \bF(\bV_r\bx_r(t))\right]  + \bfeta(t),
\end{eqnarray*}
where
$\bA_r =\bJ_r -\bR_r$ (see (\ref{redJQR})) and
{\small  $\bfeta(t) =\bW_r^T(\bJ -\bR ) \left[ \bF(\bx(t)) - \bW_r\bV_r^T\bF(\bV_r\bW_r^T\bx(t))\right] $. }
Note that  $\|\btheta(t)\|\,\frac{d}{dt} \|\btheta(t)\| =\frac{1}{2} \frac{d}{dt} \|\btheta(t)\|^2   =  \left\langle \btheta(t), \dot{\btheta}(t) \right\rangle$,
so we have
{\small 
\begin{equation*}
   \frac{d}{dt} \|\btheta(t)\|    =  \left\langle \frac{\btheta(t)}{\|\btheta(t)\|}, \dot{\btheta}(t) \right\rangle
    =  \left\langle \frac{\btheta(t)}{\|\btheta(t)\|}, \bA_r \bV_r^T\left[\bF(\bV_r\bW_r^T\bx(t)) -  \bF(\bV_r\bx_r(t))\right]  + \bfeta(t) \right\rangle.
  \end{equation*}
  }
  Observe that 
 {\small 
\begin{align*}
   |\left\langle \btheta(t), \bfeta(t) \right\rangle| & =  
   \left|\left\langle \bV_r\btheta(t), \bV_r \bW_r^T(\bJ -\bR ) \left[ \bF(\bx(t)) - \bW_r\bV_r^T\bF(\bV_r\bW_r^T\bx(t))\right] \right\rangle_{\bfQ}\right|\\
   & \leq
   \left\|\bV_r\btheta(t) \right\|_{\bfQ} \ \|\cbfP_r \cbfA  \left[ \bF(\bx(t)) -\cbfP_r^T\bF(\cbfP_r\bx(t))\right]\|_{\bfQ} \\
   & \leq
   \left\|\btheta(t) \right\|  \|\cbfP_r\cbfA\|_{\bfQ}
   \|(\bfI-\cbfP_r^T)\bF(\bx(t)) + \cbfP_r^T(\bF(\bx(t))-\bF(\cbfP_r\bx(t)) \|_{\bfQ} \\
   & \leq
   \left\|\btheta(t) \right\| \|\cbfP_r\cbfA\|_{\bfQ}  \cdot \left(
   \|(\bI - \cbfP_r^T)\bfeps_\bF(t)\|_{\bfQ} + \|\cbfP_r^T\|_{\bfQ} \|\bF(\bx(t))-\bF(\cbfP_r\bx(t)) \|_{\bfQ} \right) \\
   & \leq
   \left\|\btheta(t) \right\|  \|\cbfP_r\cbfA\|_{\bfQ}  \cdot \left(
   \|\bI - \cbfP_r^T\|_{\bfQ}\|\bfeps_\bF(t)\|_{\bfQ} + \|\cbfP_r^T\|_{\bfQ} \ 
   L_{\bfQ} [\bF] \|(\bfI-\cbfP_r)\bx(t) \|_{\bfQ} \right)   \\
   & \leq
   \left\|\btheta(t) \right\| \left( \|\cbfP_r \cbfA\|_{\bfQ}\|\cbfP_r^T\|_{\bfQ}\right) 
    \cdot \left(\|\bfeps_\bF(t)\|_{\bfQ} +  L_{\bfQ} [\bF]\  \|\cbfP_r\|_{\bfQ} \|\bfeps_\bx(t)\|_{\bfQ} \right)  \\
    & \leq
   \left\|\btheta(t) \right\| \, \cdot\,   \beta \left(\|\bfeps_\bF(t)\|_{\bfQ} +  \gamma \|\bfeps_\bx(t)\|_{\bfQ} \right)  
  \end{align*}
  }
  and
{\small 
\begin{align*}
&\left\langle \btheta(t), \bA_r \bV_r^T\left[\bF(\bV_r\bW_r^T\bx(t)) -  \bF(\bV_r\bx_r(t))\right] \right\rangle  \\
& \qquad \qquad =
\left\langle \bV_r\btheta(t), \cbfP_r\cbfA \cbfP_r^T\left[\bF(\bV_r\bW_r^T\bx(t)) -  \bF(\bV_r\bx_r(t))\right] \right\rangle_{\bfQ} \\
& \qquad \qquad \qquad \leq  \cL_{\bfQ} [\cbfP_r\cbfA \cbfP_r^T\bF]\,\cdot\, \left\| \bV_r\btheta(t) \right\|_{\bfQ}^2 
=  \alpha\, \left\| \btheta(t) \right\|^2,
  \end{align*}
  }
  where we make use of the fact that 
  {\small $\|\bfI-\cbfP_r\|_{\bfQ}=\|\cbfP_r\|_{\bfQ}$} and {\small $\|\bfI-\cbfP_r^T\|_{\bfQ}=\|\cbfP_r^T\|_{\bfQ}$}.

For (\ref{eqn:errBd_y1}), we find 
\begin{align*}
    \|\by - \by_r\| 
 & =  \|\bB^T\bF(\bx) - \bB^T \bW_r \bV_r^T\bF(\bV_r\bx_r)\|\\
 & =  \|\bB^T{\bfQ}^{-1/2}{\bfQ}^{1/2}\left(\bF(\bx) -  \cbfP_r^T\bF(\bV_r\bx_r)\right)\|\\
 & \leq \|\bB^T{\bfQ}^{-1/2}\| \, \|(\bI -\cbfP_r^T)\bfeps_\bF + \cbfP_r^T\left(\bF(\bx) - \bF(\bV_r\bx_r)\right)\|_{\bfQ}\\
 & \leq  \|\bB^T{\bfQ}^{-1}\bB\|^{1/2} \left(\|(\bI -\cbfP_r^T)\bfeps_\bF\|_{\bfQ} 
 + \|\cbfP_r^T\left(\bF(\bx) - \bF(\bV_r\bx_r)\right)\|_{\bfQ}\right)\\
 & \leq  \|\bB^T{\bfQ}^{-1}\bB\|^{1/2} \, \|\cbfP_r^T\|_{\bfQ}
  \left(\|\bfeps_\bF\|_{\bfQ} + \|\left(\bF(\bx) - \bF(\bV_r\bx_r)\right)\|_{\bfQ}\right)\\
  & \leq  \|\bB^T{\bfQ}^{-1}\bB\|^{1/2} \, \|\cbfP_r^T\|_{\bfQ}
  \left(\|\bfeps_\bF\|_{\bfQ} + L_{\bfQ} [\bF] \|\bx - \bV_r\bx_r\|_{\bfQ}\right)
 \end{align*} 
 Thus,
 {\small
\begin{align*}
 \int_0^T  \|\by(t) - \by_r(t)\|^2\, dt &\leq 
2 \|\bB^T{\bfQ}^{-1}\bB\| \, \|\cbfP_r^T\|_{\bfQ}^2
  \int_0^T  \left(\|\bfeps_\bF(t)\|_{\bfQ}^2 + L_{\bfQ} [\bF]^2 \|\bx - \bV_r\bx_r\|_{\bfQ}^2\right)\, dt\\
  &\leq 
2 \|\bB^T{\bfQ}^{-1}\bB\| \, \|\cbfP_r^T\|_{\bfQ}^2 \ \cdot \\
    & \qquad \qquad  \left(\cE_\bF + L_{\bfQ} [\bF]^2 \left(C_\bx \, \cE_\bx
    +   C_\bF\,  \cE_\bF + C_0 \mbox{\small $\| \bW_r^T\bx(0)-\bx_r(0)\|^2$ }\right)\right)
\end{align*} 
}
and (\ref{eqn:errBd_y1}) follows. 
\end{proof}

 
\section{Structure-preserving model reduction with DEIM}
\label{sec:MOR_DEIMstr}

%
%
The performance of projection-based reduels of nonlinear systems can be degraded by the 
need to lift the reduced state to the full state dimension in order to evaluate the nonlinear term.  
For example,  consider the evaluation of 
$\bfV_r^T\nabla_{\!\bfx}{H}(\bfV_r\bfx_r)$  in (\ref{reduced_f}).  
If $\nabla_{\!\bfx}{H}(\bfx)$ is nonlinear in $\bfx$, it is likely that $\bfV_r^T\nabla_{\!\bfx}{H}(\bfV_r\bfx_r)$
cannot be precomputed explicitly as a map from $\mathbb{R}^r$ to $\mathbb{R}^r$ without an intermediate lifting to 
$\mathbb{R}^n$. The order of complexity required to evolve the reduced system will then remain at least $\mathcal{O}(n)$ 
and there may be little benefit seen in the use of a reduced model. 
Several approaches have been proposed to address this difficulty; see, e.g.,  \cite{Astrid2008, Barrault2004,Carlberg2013,ChatSorDEIM_siam2010, Everson1995}. 
We resolve the complexity issue by developing a variant of the \emph{discrete empirical interpolation method} (\textsc{deim}) \cite{ChatSorDEIM_siam2010}, which is itself a discrete variant of the Empirical Interpolation Method introduced in \cite{Barrault2004}. 
 Since \textsc{deim} (as presented in \cite{ChatSorDEIM_siam2010}) does not typically preserve port-Hamiltonian structure,  
 we develop here a structure-preserving variant of \textsc{deim} together with associated error estimates.   
  There have been other structure-preserving methods developed recently that employ methods resolving the lifting bottleneck, e.g.,
 see \cite{carlberg2015} for an approach that preserves Lagrangian structure in structural dynamics using gappy POD.
\subsection{The Discrete Empirical Interpolation Method}
\label{sec:DEIM}

The `lifting bottleneck' in the reduction of large scale nonlinear models as described above is resolved by \textsc{deim} 
through the approximation of the nonlinear system function via interpolation. This approximation is done in such a way as to
not require a prolongation of the reduced state variables (`lifting') back
to the original high dimensional state space.  
Only a few selected entries of the original nonlinear term need be evaluated at each time step.

In particular, let $\mathbf{f}:\mathscr{D} \mapsto \mathbb{R}^{n}$ be a nonlinear vector-valued function
defined on a domain $\mathscr{D} \subseteq \mathbb{R}^{n}$.
Let $\mathbf{U}_m = [\bu_1, \dots, \bu_m] \in \real^{n \times m}$  have rank  $m$ and define
$\mathbf{E}_m = [\mathbf{e}_{\wp_1}, \dots, \mathbf{e}_{\wp_m}] \in \mathbb{R}^{n \times m}$
with the index set $\{\wp_1, \dots, \wp_{m}\}$  output from Algorithm~\ref{alg:DEIM_Ubasis} using the input
basis $\{\mathbf{u}_i\}_{i = 1}^m$.  $\be_{\wp_j} \in \real^{n}$ 
denotes the $\wp_j$th column of the $n$-by-$n$ identity matrix.

The \textsc{deim} approximation of order $m \le n$ for $\bff$ in ${\rm span}\{\mathbf{U}_m\}$ 
is given by
\begin{equation} \label{def:DEIM_approxfun}
\widehat{\mathbf{f}}(\tau)  := \mathbb{P}\, \mathbf{f}(\tau), \quad \mbox{ where }
\quad \mathbb{P}= \mathbf{U}_m(\mathbf{E}_m^T\mathbf{U}_m)^{-1}\mathbf{E}_m^T.
\end{equation}
Note that in the original work, \cite{ChatSorDEIM_siam2010}, 
$\mathbf{U}_m$ is assumed to have orthonormal columns, 
yet linear independence suffices for $\mathbb{P}$ to be well defined 
and that is all that we require here.  
%
\begin{algorithm}[h!tbp]
\caption{: \textsc{deim} \cite{ChatSorDEIM_siam2010} }
\label{alg:DEIM_Ubasis}
\vspace{2mm}

\hspace{.5mm} {\bf INPUT: $\{\mathbf{u}_\ell\}_{\ell=1}^m \subset \mathbb{R}^{n}$} linearly
independent \\
\vspace{.5mm}
{\bf OUTPUT:} $\vec{\wp} = [\wp_1, \dots, \wp_{m}]^T \in \mathbb{R}^{m}$
\vspace{1mm}
\begin{algorithmic}[1]
\STATE \label{algLine:DEIM_initial}
          $[|\rho|, \hspace{2mm} \wp_{1}] = {\tt max}\{|\mathbf{u}_1|\}$\\
          \vspace{2mm}
\STATE    $\mathbf{U} = [\mathbf{u}_1]$, $\mathbf{E} = [\mathbf{e}_{\wp_{1}}]$, $\vec{\wp}  = [\wp_1]$\\
          \vspace{2mm}
\FOR{$\ell=2$ to $m$}
\vspace{2mm}

\STATE  \label{algLine:DEIM_solve_coeff}
        Solve $(\mathbf{E}^T\mathbf{U})\mathbf{c} = \mathbf{E}^T\mathbf{u}_{\ell}$
for $\mathbf{c}$
        \vspace{2mm}

\STATE \label{algLine:DEIM_set_r}
         $\mathbf{r} = \mathbf{u}_{\ell} - \mathbf{U}\mathbf{c}$
         \vspace{2mm}

\STATE \label{algLine:DEIM_find_index}
         $[|\rho|, \hspace{2mm} \wp_{\ell}] = {\tt max}\{|\mathbf{r}|\}$ \quad  
         $\left(\begin{array}{c} \mbox{ $|\rho| = |v_{{\wp}}| =  \max_{i=1,\dots,n}\{|v_{i}|\}$, with the}\\
                                           \mbox{smallest index taken for $\wp$ in case of ties.} \end{array}\right)$

\STATE \label{algLine:DEIM_update_indexvec}
       $\mathbf{U} \leftarrow [ \mathbf{U} \hspace{2mm} \mathbf{u}_{\ell}]$,
       $\mathbf{E} \leftarrow [\mathbf{E}  \hspace{2mm} \mathbf{e}_{\wp_{\ell}}]$,
                 $\vec{\wp} \leftarrow
                \left[\begin{array}{c}
                \vec{\wp}\\
                \wp_{\ell}
                \end{array} \right]$

\ENDFOR
\end{algorithmic}
\end{algorithm}

%

We adapt an error bound for the \textsc{deim} function approximation derived 
in \cite{ChatSorDEIM_siam2010} for the case that $\bU_m$ has $\bQ$-orthonormal columns.
\begin{lemma}
\label{lemma:DEIMErrBd}
If $\bU_m^T\bQ\bU_m = \bI_m$, then
  $  \|\mathbf{f}(\tau)- \widehat{\mathbf{f}}(\tau)\|_{\bQ}  \le  \|\mathbb{P}\|_{\bQ} \, \hspace{1mm} \mathscr{E}_{\bQ}(\bff(\tau),\bU_m)$,
    where
    $\mathscr{E}_{\bQ}(\bff(\tau),\bU_m)$ is the best $\bQ$-norm approximation error for $\bff(\tau)$ from $\mathsf{Ran}(\bU_m)$. 
\end{lemma}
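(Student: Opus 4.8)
The plan is to recognize $\mathbb{P}$ as an \emph{oblique projector} onto $\mathsf{Ran}(\bU_m)$ and then to run the standard projector-based best-approximation argument, carried out in the $\bQ$-geometry. First I would verify the projector property directly from the definition (\ref{def:DEIM_approxfun}): since the inner factor $\bE_m^T\bU_m$ cancels its inverse,
\[
\mathbb{P}^2 = \bU_m(\bE_m^T\bU_m)^{-1}(\bE_m^T\bU_m)(\bE_m^T\bU_m)^{-1}\bE_m^T = \bU_m(\bE_m^T\bU_m)^{-1}\bE_m^T = \mathbb{P},
\]
so $\mathbb{P}$ is idempotent; moreover $\mathbb{P}\bU_m = \bU_m$, so $\mathbb{P}$ fixes $\mathsf{Ran}(\bU_m)$ and hence has range exactly $\mathsf{Ran}(\bU_m)$. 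For $1\le m<n$ it is neither $\bzero$ nor $\bI$, so it is a nontrivial projector.

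Second, I would reduce the approximation error to the best-approximation error. Let $\bg^\star \in \mathsf{Ran}(\bU_m)$ be the (unique) $\bQ$-orthogonal projection of $\bff(\tau)$ onto $\mathsf{Ran}(\bU_m)$, i.e.\ the best $\bQ$-norm approximant, so that $\|\bff(\tau)-\bg^\star\|_{\bQ} = \mathscr{E}_{\bQ}(\bff(\tau),\bU_m)$. Because $\bg^\star\in\mathsf{Ran}(\bU_m)$ we have $\mathbb{P}\bg^\star=\bg^\star$, whence $(\bI-\mathbb{P})\bg^\star=\bzero$ and
\[
\bff(\tau) - \widehat{\bff}(\tau) = (\bI-\mathbb{P})\bff(\tau) = (\bI-\mathbb{P})\bigl(\bff(\tau)-\bg^\star\bigr).
\]
Taking $\bQ$-norms and using submultiplicativity of the induced operator norm yields $\|\bff(\tau)-\widehat{\bff}(\tau)\|_{\bQ} \le \|\bI-\mathbb{P}\|_{\bQ}\,\mathscr{E}_{\bQ}(\bff(\tau),\bU_m)$. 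The final ingredient is the oblique-projector identity $\|\bI-\mathbb{P}\|_{\bQ}=\|\mathbb{P}\|_{\bQ}$, valid for any nontrivial idempotent in a norm induced by an inner product; this is exactly the identity (from \cite{Szyld.06}) already invoked in the proof of Theorem \ref{thm:errBd_MORstr}. Substituting it gives the claimed bound.

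Third, regarding the $\bQ$-geometry and the role of the hypothesis $\bU_m^T\bQ\bU_m=\bI_m$: the cleanest way to make every step rigorous — and to inherit the original Euclidean estimate of \cite{ChatSorDEIM_siam2010} verbatim — is the change of variables $\widetilde{\bff}=\bQ^{1/2}\bff$, $\widetilde{\mathbb{P}}=\bQ^{1/2}\mathbb{P}\,\bQ^{-1/2}$, with $\bQ^{1/2}$ the symmetric positive-definite square root. Then $\|\cdot\|_{\bQ}$ becomes the Euclidean $2$-norm, $\widetilde{\mathbb{P}}$ is a Euclidean projector (idempotency is preserved under similarity), $\widetilde{\bU}_m:=\bQ^{1/2}\bU_m$ has orthonormal columns precisely because $\bU_m^T\bQ\bU_m=\bI_m$, and the Euclidean best-approximation error of $\widetilde{\bff}$ from $\mathsf{Ran}(\widetilde{\bU}_m)$ coincides with $\mathscr{E}_{\bQ}(\bff,\bU_m)$; since $\|\widetilde{\mathbb{P}}\|_2=\|\mathbb{P}\|_{\bQ}$, the weighted bound is identical to the unweighted one after this isometric transport. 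The only point requiring care — and the one I would single out as the crux — is confirming that the projector identity $\|\bI-\mathbb{P}\|_{\bQ}=\|\mathbb{P}\|_{\bQ}$ survives the passage to the weighted norm; the $\bQ^{1/2}$ substitution settles it immediately, since it turns $\mathbb{P}$ into an honest Euclidean projector for which the identity is classical.
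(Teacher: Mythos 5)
Your proof is correct and follows essentially the same route the paper relies on: the paper gives no explicit proof but cites the original \textsc{deim} bound of \cite{ChatSorDEIM_siam2010}, whose argument is exactly yours — $\mathbb{P}$ is an oblique projector fixing $\mathsf{Ran}(\bU_m)$, so $\bff-\mathbb{P}\bff=(\bI-\mathbb{P})(\bff-\bg^\star)$ for the best approximant $\bg^\star$, followed by the projector identity $\|\bI-\mathbb{P}\|_{\bQ}=\|\mathbb{P}\|_{\bQ}$ (the same \cite{Szyld.06} identity the paper invokes in Theorem \ref{thm:errBd_MORstr}). Your $\bQ^{1/2}$ change of variables is precisely the ``adaptation'' to $\bQ$-orthonormal columns that the paper alludes to, so nothing is missing.
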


The invertibility of $\bfE^T\bU$ at the end of each cycle of the \textsc{deim} procedure 
is verified in \cite{ChatSorDEIM_siam2010} where
it is shown  that each \textsc{deim} interpolation index is selected in order to limit the stepwise
growth of  the factor $ \| (\bE_m^T \bU_m)^{-1} \|= \|\mathbb{P}\|_{\bQ}$ in the error bound. 
This will be used in the next section to assess the accuracy of the 
state variables in the \textsc{deim} reduced system.
\textsc{deim}  shown in Algorithm \ref{alg:DEIM_Ubasis} uses LU with partial pivoting for interpolation indices.
A new selection operator for \textsc{deim} based on the pivoted QR has been recently 
introduced by Drma\v{c} and Gugercin \cite{drmacgugercin2015}.  Our numerical results in this paper are based the original implementation in Algorithm \ref{alg:DEIM_Ubasis}. We also refer the reader to \cite{peherstorfer2014localized} for a recently introduced localized version of  \textsc{deim} and to \cite{peherstorfer2015online} for online adaptivity approach to \textsc{deim} that adjusts the \textsc{deim} subspace and the interpolation indices with online low-rank updates.

Consider the nonlinear \textsc{ph} system in (\ref{PHdef}) where $\bfJ$, $\bfR$, and  $\bfB$  are constant, and 
$\bF(\bx) =\nabla_{\bx} H(\bx)$ is nonlinear.   In the reduced system (\ref{redPHdef}),
we use the approximation (\ref{redHam}):
$\nabla_{\bx} H(\bx) \approx \bW_r \bV_r^T \nabla_{\bx} H( \bx) $, for $\bx\in \mathsf{Ran}(\bV_r)$. 
\textsc{deim} can be applied directly to $\bF$ (using $\bW_r$ instead of $\bU$) to obtain an approximation
in the form of
\[
\bF(\bV_r\bx_r) \approx \bW_r (\bfE^T\bW_r)^{-1}\bfE^T\bF(\bV_r\bx_r),
\]
 allowing us to evaluate the nonlinear term 
with low complexity. However, this approach will not preserve the underlying \textsc{ph} structure; it
generally will not produce a passive system; and indeed, the reduced system might no longer be stable. 
We modify \textsc{deim} to overcome these shortcomings. 

\subsection{The DEIM Hamiltonian} \label{DEIMHamiltonian}
We continue to assume that the source of nonlinearity in the system (\ref{PHdef}) lies in 
the Hamiltonian gradient: $\nabla_{\bx} H(\bx)$, and we
focus on approximating this nonlinear term in a way that is consistent with the \textsc{ph} structure, 
so that the complexity does not depend on the original full-order dimension.  
This restriction comes largely without loss of generality since additional state-space dependence of $\bfJ$, $\bfR$, and $\bfB$ can be accommodated with usual \textsc{deim}-based approaches with
no threat to the underlying port-Hamiltonian structure. 

We first identify a linear component of $\nabla_{\bx} H(\bx)$, or equivalently, 
a quadratic component of $H(\bx)$:
\begin{equation}
H(\bx) =  \mbox{$\frac{1}{2}$}\bx^T\bQ\bx + h(\bx)
 \label{defn:nonlinFull:2terms}
\end{equation}
where $\bQ$ is an $n\times n$ positive-definite, constant matrix.  A typical choice may be $\bQ = \nabla^2_\bx H (\bx_0) $ at 
 an equilibrium point, $\nabla_\bx H(\bx_0) = {\bf 0}$. Similar strategies are considered in \cite{Hochman2011} to maintain high accuracy.   Once $\bQ$ is selected, (\ref{defn:nonlinFull:2terms}) determines $h(\bx)$ and $\nabla_{\bx} h(\bx)$ then captures the remaining nonlinear portion of $\nabla_{\bx} H(\bx)$. 

We select a new modeling basis, $\bfU_m$,  orthogonalized with respect to $\bQ$, so that
$\nabla_{\bx}h(\bx(t)) \approx \bfU_m \bfg_m (t)$ 
and $\bfU_m^T \bfQ\bfU_m =\bfI$.  There will be a variety of choices for $\bfU_m$; 
we consider a couple of them below. 
  Using $\bfU_m$, we calculate \textsc{deim} indices, $\wp_1, \dots, \wp_{m}$ 
  with Algorithm~\ref{alg:DEIM_Ubasis},
  define the associated \textsc{deim} projection, 
$\mathbb{P}= \bfU_m(\bfE_m^T \bfU_m )^{-1}\bfE_m^T$,   
and finally, introduce a ``\textsc{deim} Hamiltonian":
\begin{equation} \label{DEIMHam}
\widehat{H}(\bx)= \frac{1}{2}\,\bx^T\, \bfQ\,\bx\, +\, h(\mathbb{P}^T\bx).
\end{equation}
Observe that $\nabla_{\bx}\widehat{H}(\bx)= \bfQ\bx+\mathbb{P}\nabla_{\bx}h(\mathbb{P}^T\bx)$, so 
the error induced by $\nabla_{\bx}\widehat{H}(\bx)$ is
$$
\nabla_{\bx}H(\bx)-\nabla_{\bx}\widehat{H}(\bx) = \nabla_{\bx} h(\bx)-\mathbb{P}\nabla_{\bx}h(\mathbb{P}^T\bx).
$$
If $\bfQ$ is chosen well then $\nabla_{\bx}\widehat{H}(\bx)$ can exactly recover the ``linear part" of $\nabla_{\bx}H(\bx)$. 
Observe that the evaluation of the remaining nonlinear term,
 $\mathbb{P}\nabla_{\bx}h(\mathbb{P}^T\bx)$, only involves the evaluation of $m$ elements of $\nabla_{\bx}h$  on only $m$ nonzero arguments - the remaining arguments having only $m$ nonzero values.    
  
 Observe that trivially $\nabla_{\bx}H(\bx)=\nabla_{\bx}\widehat{H}(\bx)$ when $m=n$.  However, the approximation  
 $\nabla_{\bx}H(\bx)\approx \nabla_{\bx}\widehat{H}(\bx)$ can be effective even for significantly smaller $m\ll n$.
 Nonetheless, the enforced symmetry in this approximation appears to require some additional considerations.  
 
Let $\Omega\subset \mathbb{R}^n$, be a compact convex set containing 
 each of the trajectories $\bx(t)$ and $\mathbb{P}^T\bx(t)$, for $0\leq t\leq T$ on its interior.   
 Define 
 $$
 \bfK^{\circ}=\int_{\Omega}\nabla_{\bx}h(\bx) \nabla_{\bx}h(\bx)^T \bQ \ d\nu(\bx).
 $$
 $\bfK^{\circ}$ is $\bQ$-selfadjoint and positive-definite with associated eigenpairs 
 $\bfK^{\circ}\,\bfu_k^{\circ}=\sigma_k^{\circ}\,\bfu_k^{\circ}$ 
 for $\sigma_1^{\circ}\geq\sigma_2^{\circ}\geq\ldots \geq\sigma_n^{\circ}\geq 0$ and 
  $\bQ$-orthonormal eigenvectors, $\bfu_i^{\circ T} \bfQ \bfu_j^{\circ} = \delta_{ij}$. 
  For any choice of $0<m\leq n$, designate the dominant modes as $\bfU_m^{\circ}=[ \bfu_1^{\circ} ,\,\ldots,\, \bfu_m^{\circ}]$ and 
  the complementary subdominant modes as $\widetilde{\bfU}_m^{\circ} =[ \bfu_{m+1}^{\circ} ,\,\ldots,\, \bfu_n^{\circ}]$.  Observe that if
  $\Omega$ shrank to $\bfx(t)$, $\bfU_m^{\circ}$ could be considered as the limiting case of a usual POD basis of dimension $m$.  
  
We have in that case,
 $$
 \bfK^{\circ}\, \bfU_m^{\circ} = \bfU_m^{\circ}\mathsf{diag}(\sigma_1^{\circ},\,\sigma_2^{\circ},\,\ldots,\,\sigma_m^{\circ})\quad \mbox{and}\quad
 \bfK^{\circ}\, \widetilde{\bfU}_m^{\circ} = \widetilde{\bfU}_m^{\circ}\mathsf{diag}(\sigma_{m+1}^{\circ},\,\ldots,\,\sigma_n^{\circ}).
 $$ 
 If $\mathscr{E}_{\bQ}(\nabla_{\bx}h(\bfx),\bU_m^{\circ})$ denotes the (local) best $\bQ$-norm approximation error 
to $\nabla_{\bx}h(\bfx)$ out of $\bU_m^{\circ}$ (as defined in Lemma \ref{lemma:DEIMErrBd}), then
 {\small 
\begin{align*}
 \left(\varepsilon_{\bQ}^{\circ(m)}\right)^2=\int_{\Omega} \mathscr{E}_{\bQ}&(\nabla_{\bx}h(\bfx),\bU_m^{\circ})^2 \, d\nu(\bx)=\int_{\Omega} \|(\bfI-\bU_m^{\circ}\bU_m^{\circ T} \bQ)\nabla_{\bx}h(\bx)\|_{\bQ}^2  \ d\nu(\bx) \\
 = \int_{\Omega}& \mathsf{trace}\left((\bfI-\bU_m^{\circ}\bU_m^{\circ T} \bQ)\nabla_{\bx}h(\bx)\nabla_{\bx}h(\bx)^T(\bfI-\bQ\bU_m^{\circ}\bU_m^{\circ T}) \bQ \right) \, d\nu(\bx) \\
 = &\int_{\Omega} \mathsf{trace}\left(\nabla_{\bx}h(\bx)\nabla_{\bx}h(\bx)^T \bQ(\bfI-\bU_m^{\circ}\bU_m^{\circ T} \bQ) \right) \ d\nu(\bx) \\
 & =  \mathsf{trace}\left( \bfK^{\circ}(\bfI-\bU_m^{\circ}\bU_m^{\circ T} \bQ) \right)  = \sum_{k=m+1}^n \sigma_{k}^{\circ}.
\end{align*}
}
\begin{lemma} \label{symmDEIMapprox_h}
Suppose $h(\bx)$ is continuously differentiable in $\Omega$. Construct the DEIM projection using $\bU_m^{\circ}$ with 
$\mathbb{P}_{\circ}=\bU_m^{\circ}(\bfE_m^T\bfU_m^{\circ} )^{-1} \bfE_m^T$.  Then
\begin{equation}  \label{hDEIMbound}
|h(\bfx)-h(\mathbb{P}_{\circ}^T\bx)| \leq \|(\bI-\mathbb{P}_{\circ}^T)\bx(t)\|_{\bQ^{-1}}  \cdot \varepsilon_{\bQ}^{\circ(m)}
\end{equation}
\end{lemma}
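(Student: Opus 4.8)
The plan is to reduce the scalar increment $h(\bx)-h(\mathbb{P}_{\circ}^T\bx)$ to an inner product against a gradient residual, and then to exploit the interpolatory structure of $\mathbb{P}_{\circ}$ together with a weighted Cauchy--Schwarz inequality. First I would invoke the mean value theorem for the scalar $C^1$ map $s\mapsto h(\mathbb{P}_{\circ}^T\bx+s(\bI-\mathbb{P}_{\circ}^T)\bx)$ on $[0,1]$: since $\Omega$ is convex and contains both $\bx$ and $\mathbb{P}_{\circ}^T\bx$, the whole segment lies in $\Omega$, so there is an intermediate point $\bz^{*}\in\Omega$ with
\[
h(\bx)-h(\mathbb{P}_{\circ}^T\bx)=\nabla_{\bx}h(\bz^{*})^{T}(\bI-\mathbb{P}_{\circ}^T)\bx .
\]

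The decisive step is to notice that the component of $\nabla_{\bx}h(\bz^{*})$ lying in $\mathsf{Ran}(\bU_m^{\circ})$ contributes nothing to this inner product. Because $\mathbb{P}_{\circ}=\bU_m^{\circ}(\bfE_m^T\bU_m^{\circ})^{-1}\bfE_m^T$ is idempotent with range $\mathsf{Ran}(\bU_m^{\circ})$, one checks directly that $\bU_m^{\circ T}(\bI-\mathbb{P}_{\circ}^T)=\bzero$. Writing $\nabla_{\bx}h(\bz^{*})$ as its $\bQ$-orthogonal projection onto $\mathsf{Ran}(\bU_m^{\circ})$ (namely $\bU_m^{\circ}\bU_m^{\circ T}\bQ\,\nabla_{\bx}h(\bz^{*})$) plus the complementary residual, the projected part is annihilated and
\[
h(\bx)-h(\mathbb{P}_{\circ}^T\bx)=\left[(\bI-\bU_m^{\circ}\bU_m^{\circ T}\bQ)\nabla_{\bx}h(\bz^{*})\right]^{T}(\bI-\mathbb{P}_{\circ}^T)\bx .
\]

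I would then apply the split Cauchy--Schwarz bound $\ba^T\bb\le\|\ba\|_{\bQ}\,\|\bb\|_{\bQ^{-1}}$ (valid since $\ba^T\bb=(\bQ^{1/2}\ba)^T(\bQ^{-1/2}\bb)$), taking $\ba$ to be the gradient residual and $\bb=(\bI-\mathbb{P}_{\circ}^T)\bx$. The first factor is exactly the best $\bQ$-norm approximation error of $\nabla_{\bx}h(\bz^{*})$ from $\bU_m^{\circ}$, i.e. $\|(\bI-\bU_m^{\circ}\bU_m^{\circ T}\bQ)\nabla_{\bx}h(\bz^{*})\|_{\bQ}=\mathscr{E}_{\bQ}(\nabla_{\bx}h(\bz^{*}),\bU_m^{\circ})$, which yields
\[
|h(\bx)-h(\mathbb{P}_{\circ}^T\bx)|\le \mathscr{E}_{\bQ}(\nabla_{\bx}h(\bz^{*}),\bU_m^{\circ})\cdot\|(\bI-\mathbb{P}_{\circ}^T)\bx\|_{\bQ^{-1}} .
\]
Finally, since $\bz^{*}\in\Omega$, I would bound the pointwise factor $\mathscr{E}_{\bQ}(\nabla_{\bx}h(\bz^{*}),\bU_m^{\circ})$ by the aggregate error $\varepsilon_{\bQ}^{\circ(m)}=\big(\sum_{k=m+1}^{n}\sigma_k^{\circ}\big)^{1/2}$ to obtain (\ref{hDEIMbound}).

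I expect this last step to be the main obstacle, because $\varepsilon_{\bQ}^{\circ(m)}$ is defined as the $\nu$-averaged (root-mean-square) approximation error over $\Omega$, whereas the mean value theorem leaves a single pointwise value $\mathscr{E}_{\bQ}(\nabla_{\bx}h(\bz^{*}),\bU_m^{\circ})$. Reconciling these requires either reading $\varepsilon_{\bQ}^{\circ(m)}$ as controlling the worst-case error over $\Omega$ (so that the pointwise factor is dominated uniformly), or replacing the scalar mean value theorem by the fundamental theorem of calculus $h(\bx)-h(\mathbb{P}_{\circ}^T\bx)=\int_0^1\nabla_{\bx}h(\bz_s)^T(\bI-\mathbb{P}_{\circ}^T)\bx\,ds$ and controlling the line integral $\int_0^1\mathscr{E}_{\bQ}(\nabla_{\bx}h(\bz_s),\bU_m^{\circ})\,ds$ by its maximum over the segment; this is the one place where the identity $\varepsilon_{\bQ}^{\circ(m)}=\sqrt{\sum_{k>m}\sigma_k^{\circ}}$ must be interpreted carefully. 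The remaining ingredients---the annihilation identity $\bU_m^{\circ T}(\bI-\mathbb{P}_{\circ}^T)=\bzero$ and the weighted Cauchy--Schwarz split---are routine.
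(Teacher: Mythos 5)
Your reduction of the increment to the subdominant component of the gradient is exactly the geometry of the paper's own proof: your annihilation identity $\bU_m^{\circ T}(\bI-\mathbb{P}_{\circ}^T)=\mathbf{0}$ is equivalent to the paper's observation that $\mathsf{Ran}(\bI-\mathbb{P}_{\circ}^T)=\mathsf{Ker}(\bU_m^{\circ T})=\mathsf{Ran}(\bQ\widetilde{\bfU}_m^{\circ})$, and your weighted Cauchy--Schwarz split producing the factor $\|(\bI-\mathbb{P}_{\circ}^T)\bx\|_{\bQ^{-1}}$ is the paper's computation $\|\bfzeta_2\|=\|(\bI-\mathbb{P}_{\circ}^T)\bx\|_{\bQ^{-1}}$ in its coordinates $\bx=\bfE_m\bfzeta_1+\bQ\widetilde{\bfU}_m^{\circ}\bfzeta_2$; note also that your residual $\|(\bI-\bU_m^{\circ}\bU_m^{\circ T}\bQ)\nabla_{\bx}h\|_{\bQ}$ coincides with the paper's $\|\widetilde{\bfU}_m^{\circ T}\bQ\nabla_{\bx}h\|$. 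Up to that point your argument is sound and matches the paper.

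The genuine gap is the one you flagged, and in the mean-value-theorem form it is fatal: the MVT leaves the pointwise quantity $\mathscr{E}_{\bQ}(\nabla_{\bx}h(\bz^{*}),\bU_m^{\circ})$ at an uncontrolled point $\bz^{*}$, whereas $\varepsilon_{\bQ}^{\circ(m)}$ is defined as an integrated ($L^2(\nu)$) quantity over $\Omega$, and an integral cannot dominate an arbitrary pointwise value, so no amount of care in reading the definitions closes that step. The paper's proof takes the alternative route you sketch, but with one crucial difference from your version of it: it writes the increment as a line integral along the segment, applies Cauchy--Schwarz in $L^2(0,1)$ to obtain the factor $\|\bfzeta_2\|\bigl(\int_0^1\|\widetilde{\bfU}_m^{\circ T}\bQ\nabla_{\bx}h(\bfE_m\bfzeta_1+\theta\,\bQ\widetilde{\bfU}_m^{\circ}\bfzeta_2)\|^2\,d\theta\bigr)^{1/2}$, and then dominates this \emph{squared path average} directly by the volume integral $\int_{\Omega}\|\widetilde{\bfU}_m^{\circ T}\bQ\nabla_{\bx}h(\bfxi)\|^2\,d\nu(\bfxi)=(\varepsilon_{\bQ}^{\circ(m)})^2$ --- it never passes through a maximum over the segment, which is what your fix (b) proposes and which would reintroduce the same pointwise-versus-average problem. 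Be aware, though, that even the paper's domination of a line integral by the $\nu$-integral over $\Omega$ is only valid under an implicit assumption that $\nu$ dominates averages along such segments; for, say, normalized Lebesgue measure on $\Omega$ the segment has measure zero and the inequality can fail, so the paper shares the looseness you identified rather than fully resolving it. In short: your diagnosis of where the difficulty lives is exactly right, and the repair is to keep the error in averaged (line-integral) form so that it matches the $L^2$ structure of $\varepsilon_{\bQ}^{\circ(m)}$, while making explicit the assumption on $\nu$ that the paper leaves tacit.
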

\begin{proof}  
Define the columns of $\widetilde{\bfE}_m$ to be complementary to those of $\bfE_m$ 
so that $[\bfE_m,\widetilde{\bfE}_m]$ is an $n\times n$ permutation matrix.
 Observe that $\mathsf{Ran}(\mathbb{P}_{\circ}^T)=\mathsf{Ran}(\bfE_m)$ and 
$\mathsf{Ran}(\mathbf{I}-\mathbb{P}_{\circ}^T)=\mathsf{Ker}(\mathbb{P}_{\circ}^T)=\mathsf{Ker}(\bfU_m^{\circ T})=\mathsf{Ran}(\bQ\widetilde{\bfU}_m^{\circ})$. Hence, 
$$
\bx=\mathbb{P}_{\circ}^T\bx+(\bI-\mathbb{P}_{\circ}^T)\bx= \bfE_m \bfzeta_1 + \bQ\widetilde{\bfU}_m^{\circ}\bfzeta_2
$$ 
for continuous functions given as
$$
\bfzeta_1(\bx) = (\bfU_m^{\circ T}\bfE_m )^{-1}\bfU_m^{\circ T}\bx\quad\mbox{ and }\quad 
\bfzeta_2(\bx) = (\widetilde{\bfE}_m^T \bQ\widetilde{\bfU}_m^{\circ} )^{-1}\widetilde{\bfE}_m^T\bx.
$$  
Note that, $\bQ^{\frac12}\widetilde{\bfU}_m^{\circ}\bfzeta_2 =  \bQ^{-\frac12}(\bI-\mathbb{P}_{\circ}^T)\bx$ 
and $\|\bfzeta_2\|=\|(\bI-\mathbb{P}_{\circ}^T)\bx\|_{\bQ^{-1}}$.

For each fixed $\bx\in\Omega$, 
define the path connecting $\bx$ and $\mathbb{P}_{\circ}^T\bx$:
$$
\mathcal{C}(\bx)=\{\theta\bx+(1-\theta)\mathbb{P}_{\circ}^T\bx\left|\ \theta\in[0,1]\right.\}
=\{\bfE_m \bfzeta_1 +\theta\, \bQ\widetilde{\bfU}_m^{\circ}\bfzeta_2 \left|\ \theta\in[0,1]\right. \}.
$$
If we define, 
$f(\bfzeta_1, \bfzeta_2)=h(\bfx)=h(\bfE_m \bfzeta_1 + \widetilde{\bfU}_m^{\circ}\bfzeta_2)$, observe that
$$
\nabla_{\bfzeta_2}f(\bfzeta_1, \bfzeta_2) = \widetilde{\bfU}_m^{\circ T} 
\bQ\nabla_{\bx}h(\bfE_m \bfzeta_1 + \bQ\widetilde{\bfU}_m^{\circ}.\bfzeta_2).
$$
Integrating $\nabla_{\bfzeta_2}f$ along $\mathcal{C}(t)$, we find  
{\small \begin{align*}
|h(\bfx)-h(\mathbb{P}_{\circ}^T\bx)|=&|f(\bfzeta_1, \bfzeta_2)-f(\bfzeta_1, 0)|
=\left|\int_{\mathcal{C}(\bx)} \widetilde{\bfU}_m^{\circ T} \bQ\nabla_{\bx}h(\bfxi) \,\cdot \, d\bfxi\right|\\
= &\left|\int_0^1 \bfzeta_2^T \widetilde{\bfU}_m^{\circ T}\bQ
\nabla_{\bx}h(\bfE_m \bfzeta_1 + \theta\, \bQ\widetilde{\bfU}_m^{\circ} \bfzeta_2) \, d\theta\right| \\
= & \| \bfzeta_2\|\left(\int_0^1 \| \widetilde{\bfU}_m^{\circ T}\bQ
\nabla_{\bx}h(\bfE_m \bfzeta_1 + \theta\, \bQ\widetilde{\bfU}_m^{\circ} \bfzeta_2)\|^2 \, d\theta\right)^{\!\frac12} \\
\leq & \| \bfzeta_2\|  \left(\int_{\Omega} \|\widetilde{\bfU}_m^{\circ T}\bQ \nabla_{\bx}h(\bfxi)\|^2\ d\nu(\bfxi)\right)^{\!\frac12} \\
\leq & \|(\bI-\mathbb{P}_{\circ}^T)\bx(t)\|_{\bQ^{-1}}  \cdot \varepsilon_{\bQ}^{\circ(m)}.
\end{align*}
}
 \end{proof}
 
Thus, a modest extension of the usual POD basis is sufficient to produce DEIM approximations  and associated DEIM Hamiltonians that converge uniformly to the true Hamiltonian with a rate related to the decay rate of $\sigma_{k}^{\circ}$.   
Stronger hypotheses on the approximating modes appear to be necessary in order to assure rapid
convergence of the corresponding gradients.  Towards that end, we assume that 
$h$ is twice continuously differentiable and define 
 $$
 \bfK=\int_{\Omega}\nabla_{\bx}h(\bx) \nabla_{\bx}h(\bx)^T \bQ \ d\nu(\bx)
 +\int_{\Omega}\left(\nabla_{\bx}^2 h(\bx)\right)^2\, \bQ \ d\nu(\bx). $$
 $\bfK$ is $\bQ$-selfadjoint and positive-definite with associated eigenpairs 
 $\bfK\,\bfu_k=\sigma_k\,\bfu_k$ 
 for $\sigma_1\geq\sigma_2\geq\ldots \geq\sigma_n\geq 0$ and 
  $\bQ$-orthonormal eigenvectors, $\bfu_i^{ T} \bfQ \bfu_j = \delta_{ij}$. 
  Analogous to what has gone before, designate dominant modes as $\bfU_m=[ \bfu_1 ,\,\ldots,\, \bfu_m]$ and 
   subdominant modes as $\widetilde{\bfU}_m =[ \bfu_{m+1} ,\,\ldots,\, \bfu_n]$, with
   $$
 \bfK\, \bfU_m = \bfU_m\mathsf{diag}(\sigma_1,\,\sigma_2,\,\ldots,\,\sigma_m),\qquad
 \bfK\, \widetilde{\bfU}_m = \widetilde{\bfU}_m\mathsf{diag}(\sigma_{m+1},\,\ldots,\,\sigma_n).
 $$ 
 and an associated error, $\varepsilon_{\bQ}^{(m)}$, defined similarly
{\small
 $$
 \left(\varepsilon_{\bQ}^{(m)}\right)^2 = \int_{\Omega}  \|(\bfI-\bU_m\bU_m^T \bfQ)\nabla_{\bx}h(\bfxi)\|_{\bfQ}^2 +  
 \|(\bfI-\bU_m\bU_m^T \bfQ)\nabla_{\bx}^2 h(\bfxi)\|_{\bfQ}^2 \ d\nu(\bfxi) =\sum_{k=m+1}^n \sigma_{k}. 
 $$
 }
 \begin{lemma} \label{symmDEIMapprox_gradh}
 Given the modeling basis, $\bU_m$, and the basis completion, $\widetilde{\bfU}_m$, as described above, 
 $$
 \|\nabla_{\bx} h-\mathbb{P} \circ(\nabla_{\bx}h)\circ\mathbb{P}^T\|_{{\mathcal L}_2(\Omega)}
 \leq \sqrt{2} \|\mathbb{P}\|_{\bQ}\ M\ \varepsilon_{\bQ}^{(m)}
$$
where $M=\max\{1,\, \|\mathbb{P}\|_{\bQ}\sqrt{\frac{\|\bE_m^T\bQ\bE_m\|}{n-m+1}} \sup_{\bx\in\Omega}\|\bx\|_{\bQ^{-1}}\}$
\end{lemma}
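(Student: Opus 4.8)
The plan is to split the interpolation error into a pure \textsc{deim} term and an ``argument-perturbation'' term, to dispatch the first with the \textsc{deim} estimate of Lemma~\ref{lemma:DEIMErrBd}, and to control the second with a first-order Taylor expansion that exposes $\nabla_{\bx}^2 h$. Abbreviating $\bff(\bx)=\nabla_{\bx}h(\bx)$, I would add and subtract $\mathbb{P}\bff(\bx)$,
\[ \bff(\bx)-\mathbb{P}\bff(\mathbb{P}^T\bx)=\underbrace{(\bI-\mathbb{P})\bff(\bx)}_{t_1(\bx)}+\underbrace{\mathbb{P}\bigl(\bff(\bx)-\bff(\mathbb{P}^T\bx)\bigr)}_{t_2(\bx)} . \]
For $t_1$, Lemma~\ref{lemma:DEIMErrBd} gives $\|t_1(\bx)\|_{\bQ}\le\|\mathbb{P}\|_{\bQ}\,\mathscr{E}_{\bQ}(\bff(\bx),\bfU_m)=\|\mathbb{P}\|_{\bQ}\,\|(\bI-\bfU_m\bfU_m^T\bQ)\bff(\bx)\|_{\bQ}$; squaring and integrating over $\Omega$ bounds $\|t_1\|_{{\mathcal L}_2(\Omega)}^2$ by $\|\mathbb{P}\|_{\bQ}^2$ times the gradient part of $(\varepsilon_{\bQ}^{(m)})^2$. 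This is the routine half.

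The work is in $t_2$. I would use the fundamental theorem of calculus along the segment joining $\mathbb{P}^T\bx$ to $\bx$: with $\bfd=(\bI-\mathbb{P}^T)\bx$,
\[ \bff(\bx)-\bff(\mathbb{P}^T\bx)=\int_0^1 \nabla_{\bx}^2 h(\mathbb{P}^T\bx+s\,\bfd)\,\bfd\;ds . \]
Two structural facts then make the \emph{Hessian} part of $\varepsilon_{\bQ}^{(m)}$ appear. First, as in the proof of Lemma~\ref{symmDEIMapprox_h}, $\mathsf{Ran}(\bI-\mathbb{P}^T)=\mathsf{Ran}(\bQ\widetilde{\bfU}_m)$, so $\bfd=\bQ\widetilde{\bfU}_m\bfzeta$ with $\|\bfzeta\|=\|\bfd\|_{\bQ^{-1}}$; by symmetry of the Hessian, $\nabla_{\bx}^2 h\,\bfd=(\widetilde{\bfU}_m^T\bQ\,\nabla_{\bx}^2 h)^T\bfzeta$, which is exactly the compressed Hessian $\widetilde{\bfU}_m^T\bQ\,\nabla_{\bx}^2 h$ whose integrated squared Frobenius norm is the Hessian part of $(\varepsilon_{\bQ}^{(m)})^2$. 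Second, since $\mathbb{P}^T$ is a (generally oblique) projector, $\|\bI-\mathbb{P}^T\|_{\bQ^{-1}}=\|\mathbb{P}^T\|_{\bQ^{-1}}=\|\mathbb{P}\|_{\bQ}$ (the first equality from \cite{Szyld.06}, the second because $\bQ^{-1/2}\mathbb{P}^T\bQ^{1/2}$ is the transpose of $\bQ^{1/2}\mathbb{P}\bQ^{-1/2}$), hence $\|\bfd\|_{\bQ^{-1}}\le\|\mathbb{P}\|_{\bQ}\sup_{\bx\in\Omega}\|\bx\|_{\bQ^{-1}}$; this supplies the second factor $\|\mathbb{P}\|_{\bQ}$ and the $\sup_{\bx}\|\bx\|_{\bQ^{-1}}$ visible in $M$.

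Keeping $\mathbb{P}$ in factored form $\mathbb{P}=\bfU_m(\bfE_m^T\bfU_m)^{-1}\bfE_m^T$, only the DEIM-indexed entries $\bfE_m^T\nabla_{\bx}^2 h$ of the Hessian survive in $t_2$, so the estimate reduces to $\int_\Omega\!\int_0^1\|\widetilde{\bfU}_m^T\bQ\,\nabla_{\bx}^2 h(\mathbb{P}^T\bx+s\bfd)\,\bfE_m\|^2\,ds\,d\nu(\bx)$, i.e.\ to the selected columns of the compressed Hessian, weighted by the metric $\bfE_m^T\bQ\bfE_m$ on those coordinates. I expect the \emph{main obstacle} to be precisely here: the Hessian is evaluated at the moving point $\mathbb{P}^T\bx+s\bfd$, not at $\bx$, so the integrand couples the outer variable $\bx$ to the path parameter $s$, and one cannot simply replace the segment integral by $\int_\Omega\|\widetilde{\bfU}_m^T\bQ\,\nabla_{\bx}^2 h\|_F^2\,d\nu=(\varepsilon_{\bQ}^{(m)})^2$. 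Overcoming it requires a Fubini/co-area comparison of the measure swept out by the connecting segments against $\nu$: the Jacobian of that sweep is controlled by $\|\bfE_m^T\bQ\bfE_m\|$, while distributing the Frobenius Hessian mass over the $n-m$ transversal modes (plus the terminal contribution) yields the $1/(n-m+1)$. Finally, combining the $t_1$ and $t_2$ bounds through $(a+b)^2\le 2a^2+2b^2$ and using $M\ge1$ produces $\|\nabla_{\bx}h-\mathbb{P}\circ(\nabla_{\bx}h)\circ\mathbb{P}^T\|_{{\mathcal L}_2(\Omega)}\le\sqrt2\,\|\mathbb{P}\|_{\bQ}\,M\,\varepsilon_{\bQ}^{(m)}$.
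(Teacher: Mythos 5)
Your skeleton coincides with the paper's own proof: the same splitting
$\nabla_{\bx}h(\bx)-\mathbb{P}\nabla_{\bx}h(\mathbb{P}^T\bx)=(\bI-\mathbb{P})\nabla_{\bx}h(\bx)+\mathbb{P}\bigl(\nabla_{\bx}h(\bx)-\nabla_{\bx}h(\mathbb{P}^T\bx)\bigr)$,
the same DEIM estimate for the first term, the same fundamental-theorem-of-calculus expansion of the second term using
$\mathsf{Ran}(\bI-\mathbb{P}^T)=\mathsf{Ran}(\bQ\widetilde{\bfU}_m)$ and $\|\bI-\mathbb{P}^T\|_{\bQ^{-1}}=\|\mathbb{P}\|_{\bQ}$,
and the same observation that $\mathbb{P}$ only sees the DEIM rows, which gives
$\|\mathbb{P}\bv\|_{\bQ}\le\|\mathbb{P}\|_{\bQ}\,\|\bfE_m^T\bQ\bfE_m\|^{1/2}\,\|\bfE_m^T\bv\|$ --- and this compression bound, not any Jacobian, is where the factor $\|\bfE_m^T\bQ\bfE_m\|$ actually enters.

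The genuine gap is at the step you yourself flag as the main obstacle: passing from
$\int_{\Omega}\int_0^1\|\widetilde{\bfU}_m^T\bQ\,\nabla^2_{\bx}h\bigl(\mathbb{P}^T\bx+s(\bI-\mathbb{P}^T)\bx\bigr)\bfE_m\|^2\,ds\,d\nu(\bx)$
to the fixed-argument Hessian integral $\int_{\Omega}\|\widetilde{\bfU}_m^T\bQ\nabla_{\bx}^2h(\bfxi)\|^2\,d\nu(\bfxi)$ that appears in $(\varepsilon_{\bQ}^{(m)})^2$. You only gesture at ``a Fubini/co-area comparison,'' and the mechanisms you conjecture for the constants are not the right ones. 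The paper resolves this with a concrete linear change of variables: for each fixed path parameter $\theta$, set $\bfxi=[\theta\,\bI+(1-\theta)\mathbb{P}^T]\bx$; since $\mathbb{P}^T$ is a rank-$m$ projector, this matrix has eigenvalue $1$ with multiplicity $m$ and eigenvalue $\theta$ with multiplicity $n-m$, hence determinant $\theta^{n-m}$, and integrating over the path parameter produces exactly $\int_0^1\theta^{n-m}\,d\theta=\frac{1}{n-m+1}$. So $1/(n-m+1)$ comes from the determinant of the segment map, not from ``distributing the Frobenius Hessian mass over the $n-m$ transversal modes,'' and $\|\bfE_m^T\bQ\bfE_m\|$ does not control any Jacobian. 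Without carrying out this (or an equivalent) change-of-variables computation, the coupling between the outer variable $\bx$ and the path parameter $s$ remains unresolved, the constant $M$ cannot be assembled, and the proof is incomplete precisely at its decisive quantitative step, even though everything surrounding it matches the paper.
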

\begin{proof}  
Note that 
{\small 
\begin{align*}
&  \|\nabla_{\bx} h-\mathbb{P} \circ(\nabla_{\bx}h)\circ\mathbb{P}^T\|_{{\mathcal L}_2(\Omega)}^2  = \int_{\Omega}\|\nabla_{\bx} h(\bx)-\mathbb{P}\nabla_{\bx}h(\mathbb{P}^T\bx)\|_{\bQ}^2\,d\nu(\bx)\\
&\quad \,\leq 
   2\int_{\Omega} \|\mathbb{P}\left(\nabla_{\bx} h(\bx)-\nabla_{\bx}h(\mathbb{P}^T\bx)\right)\|_{\bQ}^2\, d\nu(\bx)
    + 2 \int_{\Omega} \|\left( \mathbf{I}-\mathbb{P}\right)\nabla_{\bx}h(\bx)\|_{\bQ}^2\, d\nu(\bx) \\
  &\quad \leq 
   2 \,\|\mathbb{P}\|_{\bQ}^2\, \|\bfE_m^T\bQ\bfE_m\|\,\int_{\Omega}  \|\bfE_m^T\left(\nabla_{\bx} h(\bx)-\nabla_{\bx}h(\mathbb{P}^T\bx)\right)\|^2\, d\nu(\bx) \\
    & \qquad \qquad \qquad + 2 \|\bI-\mathbb{P}\|_{\bQ}^2 \int_{\Omega} \|\widetilde{\bfU}_m^T\bQ \nabla_{\bx}h(\bx)\|^2\, d\nu(\bx)
 \end{align*}
 }
Consider the function $f(\bfzeta_1, \bfzeta_2)$ as defined in the proof of Lemma \ref{symmDEIMapprox_h}, but now adapted to the new basis, 
$\bfU_m$.  Note that 
$$
\nabla_{\bfzeta_1}f(\bfzeta_1, \bfzeta_2) = \bfE_m^T\nabla_{\bx}h(\bfE_m \bfzeta_1 + \bQ\widetilde{\bfU}_m\bfzeta_2).
$$
Since $\nabla_{\bx}h$ is continuously differentiable in an open neighborhood of $\mathcal{C}(t)$, we have that 
\begin{align*}
 \bfE_m^T(\nabla_{\bx}h(\bx)-\nabla_{\bx}h(\mathbb{P}^T\bx)) &= 
 \nabla_{\bfzeta_1}f(\bfzeta_1, \bfzeta_2)-\nabla_{\bfzeta_1}f(\bfzeta_1, 0) \\
&=\nabla_{\bfzeta_1}\int_0^1 \nabla_{\bx}h(\bfE_m \bfzeta_1 + \theta\, \bQ\widetilde{\bfU}_m \bfzeta_2) \,\cdot \,
\left(\bQ\widetilde{\bfU}_m \bfzeta_2\right)\, d\theta\\
&=\left(\int_0^1 \bfE_m^T\nabla^2_{\bx}h(\bfE_m \bfzeta_1 + \theta\, \bQ\widetilde{\bfU}_m \bfzeta_2) \, \bQ\widetilde{\bfU}_m \, d\theta\right)\, \bfzeta_2
\end{align*}
and so, 
{\small
\begin{align*}
 & \int_{\Omega} \|\bfE_m^T\left(\nabla_{\bx} h(\bx)-\nabla_{\bx}h(\mathbb{P}^T\bx)\right)\|^2\, d\nu(\bx) \\
 & \qquad \leq \sup_{\bx\in\Omega}\| \bfzeta_2(\bx) \|^2 \ \int_0^1 \int_{\Omega} \|\bfE_m^T\nabla^2_{\bx}
  h(\theta\,\bx+(1-\theta)\mathbb{P}^T\bx)\, \bQ\widetilde{\bfU}_m\|^2 \, \, d\nu(\bx)\, d\theta \\
  & \qquad \leq \sup_{\bx\in\Omega}\|(\bI-\mathbb{P}^T)\bx\|_{\bQ^{-1}}^2 \ \int_0^1 \int_{\Omega} \|\bfE_m^T\nabla^2_{\bx}
  h(\bfxi)\, \bQ\widetilde{\bfU}_m\|^2 \,|\det(\theta\,\bI+(1-\theta)\mathbb{P}^T)| \, d\nu(\bfxi)\, d\theta \\
& \qquad  \leq  \|\mathbb{P}\|_{\bQ}^2 \sup_{\bx\in\Omega}\|\bx\|_{\bQ^{-1}}^2
  \left(\int_0^1|\det(\theta\,\bI+(1-\theta)\mathbb{P}^T)|\,d\theta\right)  \ \int_{\Omega} \|\widetilde{\bfU}_m^T \bfQ\nabla_{\bx}^2 h(\bfxi) \bfE_m\|^2 \ d\nu(\bfxi)\\
  & \qquad \qquad \qquad  \leq  \frac{\|\mathbb{P}\|_{\bQ}^2}{n-m+1} \sup_{\bx\in\Omega}\|\bx\|_{\bQ^{-1}}^2
 \ \int_{\Omega} \|\widetilde{\bfU}_m^T \bfQ\nabla_{\bx}^2 h(\bfxi)\|^2 \ d\nu(\bfxi)
\end{align*}
}
The interchange of order of integration is justified in the first inequality since the integrand is uniformly bounded on the joint domain $[0,1]\times \Omega$.  In the second inequality, we have introduced the (linear) change of variable 
$\bfxi= [\theta\,\bI+(1-\theta)\mathbb{P}^T]\bx\in\Omega$ for all $\theta\in[0,1]$.  The third inequality incorporates the observation 
$ \|\bI-\mathbb{P}^T\|_{\bQ^{-1}}=\|\mathbb{P}\|_{\bQ}.$  The last inequality uses the elementary observation that the Jacobian matrix for the change-of-variables has exactly two eigenvalues: $\lambda=1$ with multiplicity $m$ and $\lambda=\theta$ with multiplicity $n-m$. 
\end{proof}

\subsection{Preserving port-Hamiltonian Structure with DEIM} \label{SectPreserveportHamStruct}
Equipped with the \textsc{deim} Hamiltonian, $\widehat{H}(\bfx)$,  we apply our previously described 
structure-preserving approach: 
we define a \emph{reduced} \textsc{deim} Hamiltonian, $\widehat{H}_r(\bfx_r)=\widehat{H}(\bfV_r\bfx_r)$. 
 As before, we expect that
$\nabla_{\!\bfx}\widehat{H}(\bfV_r\bfx_r(t))\approx \bfW_r\nabla_{\!\bfx_r}\widehat{H}_r(\bfx_r(t)) $,
and the \textsc{deim}-reduced port-Hamiltonian approximation becomes
\begin{equation} \label{redPHDEIMdef}
\begin{array}{l}
 \dot{\mathbf{x}}_r =(\bfJ_r-\bfR_r)\nabla_{\!\bfx_r}\widehat{H}_r(\mathbf{x}_r) +\bfB_r\bfu(t),\\[.1in]
   \quad  \mathbf{y}_r(t) = \mathbf{B}_r^T\, \nabla_{\!\bfx_r}\widehat{H}_r(\mathbf{x}_r) 
\end{array}
\end{equation} 
with $\bfJ_r= \bfW_r^{T} \bfJ \bfW_r$,  $\bfR_r= \bfW_r^{T} \bfR \bfW_r$, $\bfB_r= \bfW_r^{T} \bfB$,
and 
$$
\nabla_{\!\bfx_r}\widehat{H}_r(\mathbf{x}_r) = \bfV_r^T\nabla_{\bx}\widehat{H}(\bfV_r\bx_r)= \bx_r+ \bfV_r^T\mathbb{P}\nabla_{\bx}h(\mathbb{P}^T \bfV_r\bx_r).
$$   
In the evaluation of 
 $\bfV_r^T\mathbb{P}\nabla_{\bx}h(\mathbb{P}^T \bfV_r\bx_r)$, the $r\times m$ matrix, $\bfV_r^T\bfU_m$, may be precomputed and
 $\nabla_{\bx}h$ is evaluated only on arguments having $m$ nonzero values and 
 only $m$ values need to be evaluated (that is, only at the \textsc{deim} indices). 
%
Algorithm~\ref{alg:poddeimph} summarizes the steps for constructing 
a structure-preserving \textsc{pod-deim} reduced system.
{\small 
\begin{algorithm}[h!tbp]
\caption{: \textsc{pod-deim} Structure-preserving \textsc{deim} reduction of NLPH systems}
 \label{alg:poddeimph}
\vspace{2mm}

\begin{algorithmic}[1]
\STATE 
Select a positive-definite matrix $\bQ$ (nominally approximating $\nabla^2_\bx H (\bx_0) $) \\
and define $ h(\bx)= H(\bx) -  \frac{1}{2}\bx^T\bQ\bx$.

\STATE
Generate the trajectory $\bfx(t)$, (or ensemble of trajectories) and collect snapshots:\\
\quad $\mathbb{X}=\left[\bfx(t_0),\bfx(t_1),\bfx(t_2),\ldots,\bfx(t_N)\right]$.\\
\quad $\mathbb{F}=\left[\nabla_{\!\bfx}H(\bfx(t_0)),\nabla_{\!\bfx}H(\bfx(t_1)),\ldots,\nabla_{\!\bfx}H(\bfx(t_N))\right]$.\\
\quad $\mathbb{G}=\left[\nabla_{\!\bfx}h(\bfx(t_0)),\nabla_{\!\bfx}h(\bfx(t_1)),\ldots,\nabla_{\!\bfx}h(\bfx(t_N))\right]$.\\

\vspace{2mm}
\STATE    
Truncate  the SVDs of the snapshot matrices, $\mathbb{X}$, $\mathbb{F}$, 
and $\mathbb{G}$ to obtain, respectively, the \textsc{pod} basis matrices
 $ \widetilde{\bfV}_r$, $ \widetilde{\bfW}_r $, and $\bU_m$ used to approximate\\
 $\bx(t)\approx \bfV_r\bx_r(t)$;  $\nabla_{\bx}H(\bx)\approx \bfW_r\bff_r(t)$;  and
  $\nabla_{\bx}h(\bx(t)) \approx \bfU_m \bfg_m (t)$. \\
Choose bases so that $\bfV_r^T\bfQ\bfV_r =\bfI$, $\bfV_r^T\bfW_r =\bfI$, 
and $\bfU_m^T \bfQ\bfU_m =\bfI$.

\vspace{2mm}
\STATE    
From Algorithm~\ref{alg:DEIM_Ubasis}, calculate \textsc{deim} indices, $\wp_1, \dots, \wp_{m}$ and the \textsc{deim} projection, 
\quad $\mathbb{P}= \bfU_m(\bfE^T \bfU_m )^{-1}\bfE^T$. 

\vspace{2mm}
\STATE
The \textsc{deim}-reduced port-Hamiltonian approximation becomes
\begin{equation*}
\begin{array}{l}
 \bhatxdot_r = (\bJ_r -\bR_r ) \nabla_{\bhatx_r}\widehat{H}_r(\bhatx_r)  +\bB_r \bu(t) \\[.1in]
 \bhaty_r    \ = \ \bB_r^T \nabla_{\bhatx_r}\widehat{H}_r(\bhatx_r) 
\end{array}
\end{equation*} 
with 
$\bfJ_r= \bfW_r^{T} \bfJ \bfW_r$,  $\bfR_r= \bfW_r^{T} \bfR \bfW_r$, $\bfB_r= \bfW_r^{T} \bfB$,
and 
$$
\nabla_{\! \bhatx_r}\widehat{H}_r(\bhatx_r) = \bfV_r^T\nabla_{\bx}\widehat{H}(\bfV_r \bhatx_r)= \bx_r+ \bfV_r^T\mathbb{P}\nabla_{\bx}h(\mathbb{P}^T \bfV_r \bhatx_r).
$$   
\end{algorithmic}
\end{algorithm}
}

Note that we can apply this structure-preserving \textsc{deim}
approach to bases  $\bV_r$ and $\bW_r$ associated with \HtwoEpsph\  bases obtained 
from Algorithm~\ref{alg:qh2ph}, instead of \podph\ bases obtained from 
Algorithm  \ref{alg:podph}. 
In the numerical examples of \S \ref{sec:Numer_examples}, we consider both strategies, refering to the variant using 
Algorithm~\ref{alg:qh2ph} as  \HtwoEpsdeimph.  

\subsection{Numerical examples}
\label{sec:Numer_examples}

We consider two system models: the nonlinear LC  ladder network from Section~\ref{sec:MOR_PH_exLadder} 
and a Toda lattice model with exponential interactions.  We will illustrate the performance of the structure-preserving \textsc{deim}-based 
model reduction approaches introduced above using the \poddeimph,   \HtwoEpsdeimph \ and the hybrid  \podHtwoEpsph~bases.
\subsubsection{Ladder network}
The $N$-stage nonlinear ladder network studied in Section~\ref{sec:MOR_PH_exLadder} will be considered  again. We will investigate here  the effect of incorporating the structure-preserving \textsc{deim} approximation in reducing the nonlinear term. 
As in Section~\ref{sec:MOR_PH_exLadder}, we use a Gaussian  pulse as the training input and test the reduced models on both the Gaussian pulse and a sinusoidal input.
  Figure~\ref{fig:POD_H2_DEIM_output} shows that   
\poddeimph \ and \HtwoEpsdeimph   \ reduced systems capture the behavior of the original output responses accurately for both inputs, repeating the success of the \podph \ and \HtwoEpsph \ reduced models. Notice from Figure~\ref{fig:POD_H2_DEIM_output} that the linearized system does not give a good approximation to the true output response of the nonlinear system, as pointed out in Section~\ref{sec:MOR_Proj_NPH}. Notice also that, 
the accuracy of the \poddeimph~reduced system ($r=6$, $m=24$) is very close to the accuracy of the \podph~ reduced system ($r=6$); the \podph~ reduced system is slightly more accurate as expected; however the \poddeimph~reduced system has much lower computational complexity as explained in the previous section; 
 thus the structure-preserving \textsc{deim} approximation reduces the computational complexity yet
  does not degrade the accuracy of the reduced model.
 As  the \textsc{deim} dimension increases, the \poddeimph~yields the same accuracy as  \podph~as shown in Figure~\ref{fig:POD_H2_DEIM_change_m} for a fixed \textsc{pod} dimension of $r=6$. 
Figure~\ref{fig:POD_H2_DEIM_errCPU} shows the convergence of the average relative errors. 
Analogous to the previous numerical test in Section~\ref{sec:MOR_PH_exLadder}, we also consider projection bases constructed using the hybrid  \podHtwoEpsph~approach combined with \textsc{deim} approximation. 
As one may see in Figure~\ref{fig:combPOD_H2_DEIM_err}, this hybrid approach is more accurate than the ones constructed using only \textsc{pod}-bases (green lines) or using only  \HtwoEps-bases (blue lines). 

\begin{figure}[h!]
  \caption{
  \scriptsize Ladder Network: ROM  responses to Gaussian pulse (left) and sinusoidal input (right).}
  \centerline{
 \includegraphics[scale=0.31]{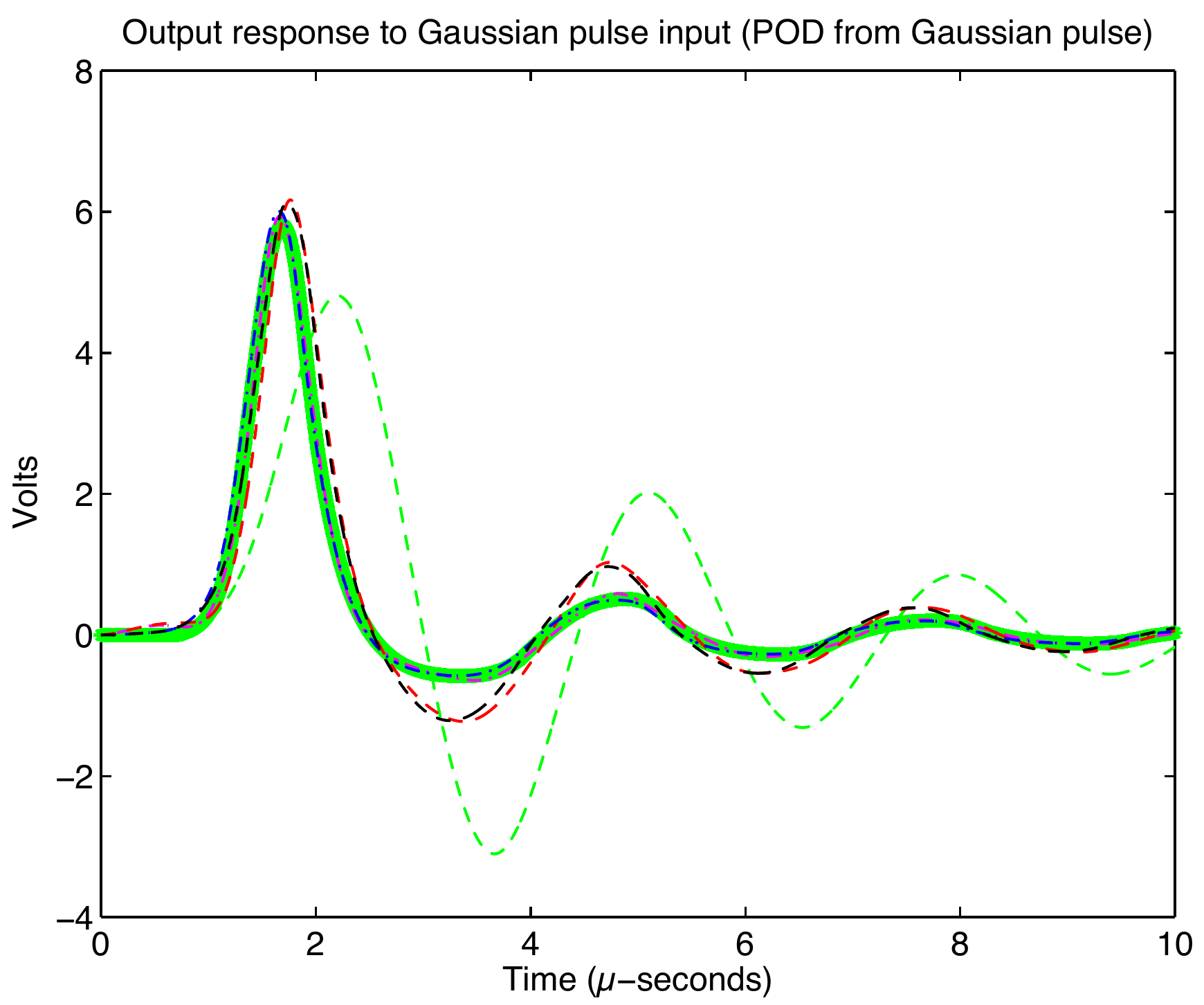}
  \includegraphics[scale=0.31]{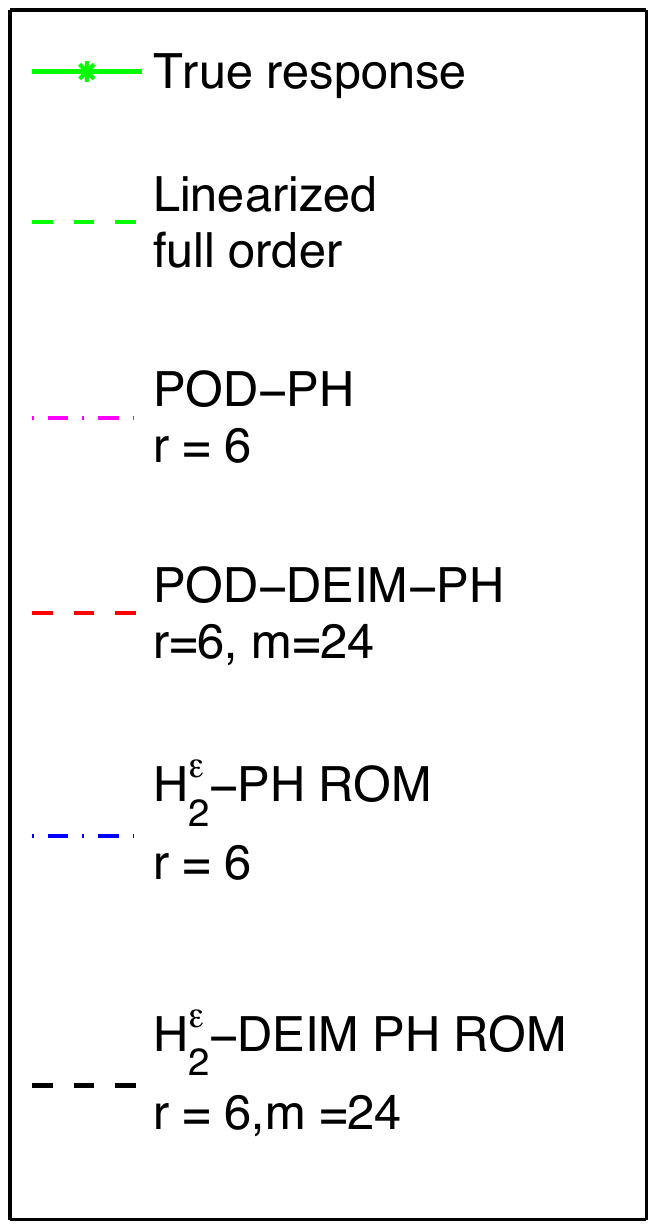}
 \includegraphics[scale=0.31]{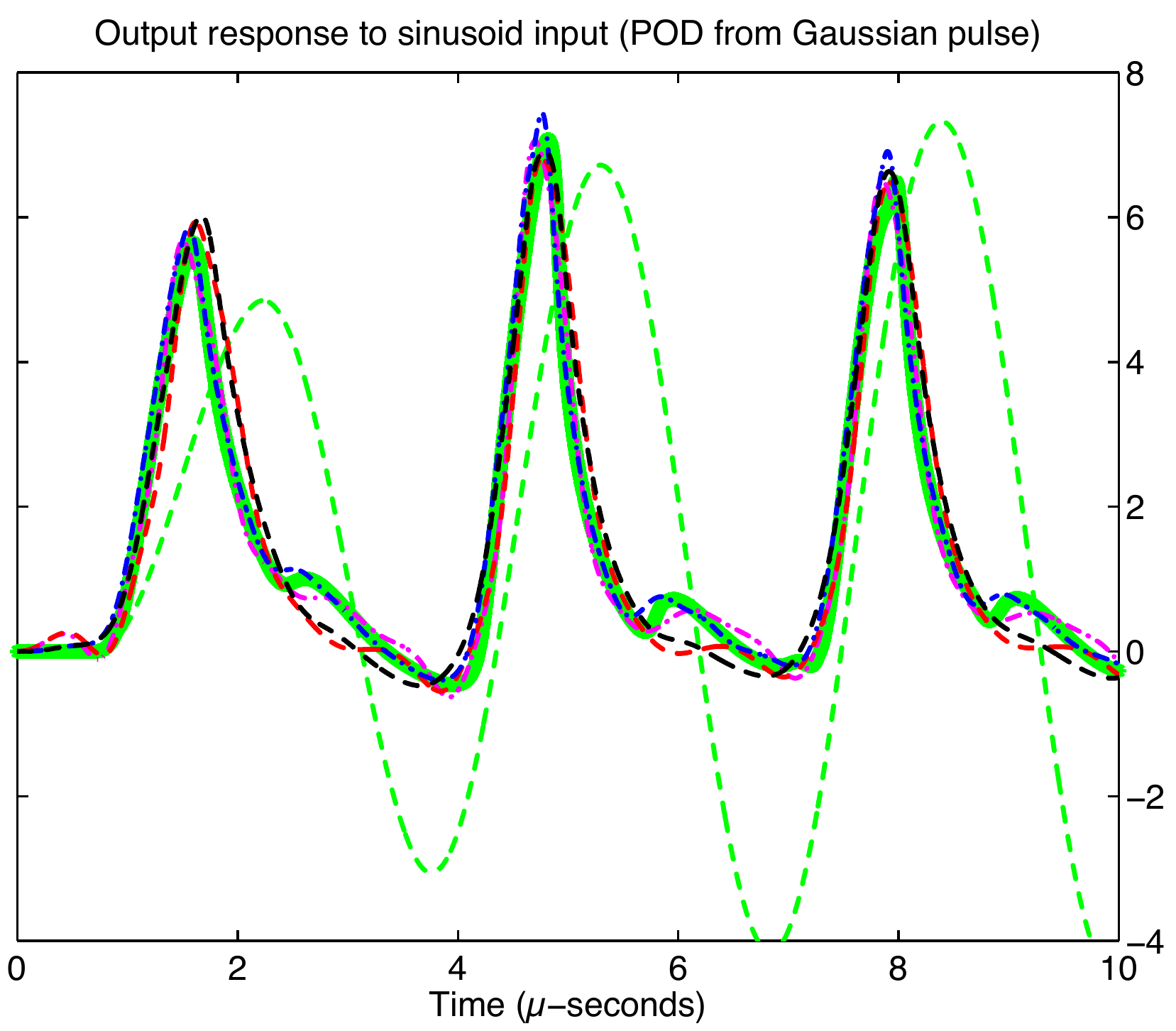}
}
  \label{fig:POD_H2_DEIM_output}
\end{figure}
 
\begin{figure}[h!]
  \caption{
  \scriptsize Ladder Network:
  Outputs from \HtwoEpsph~reduced system (dimension $r = 6$)
   and \HtwoEpsdeimph reduced systems (\HtwoEps dimension $r = 6$ with DIEM dimensions
$m = 6, 12, 14, 48$)--similarly for the plot of \podph and \poddeimph reduced systems. }
  \centerline{
 \includegraphics[scale=0.33]{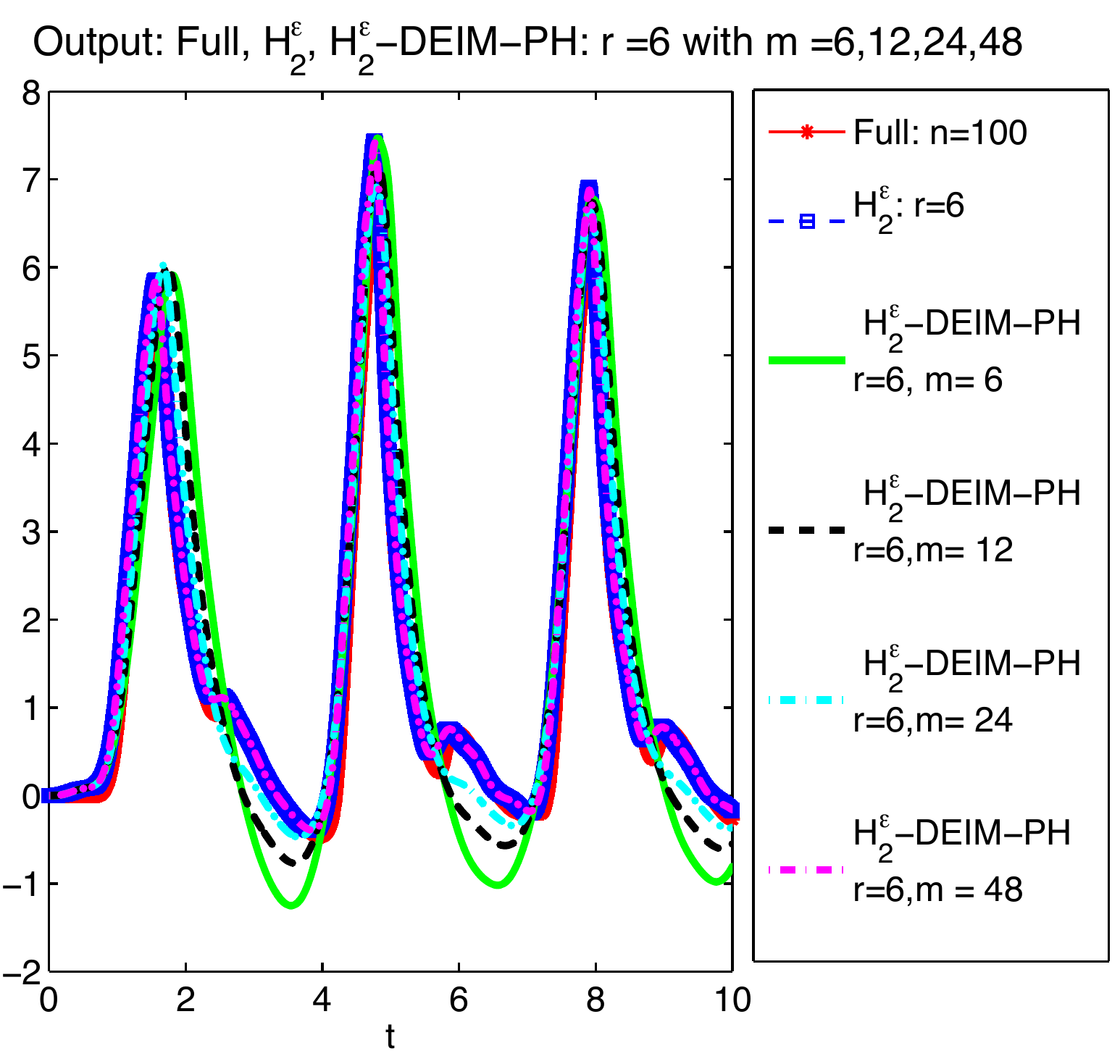}
\hspace{1ex}
 \includegraphics[scale=0.33]{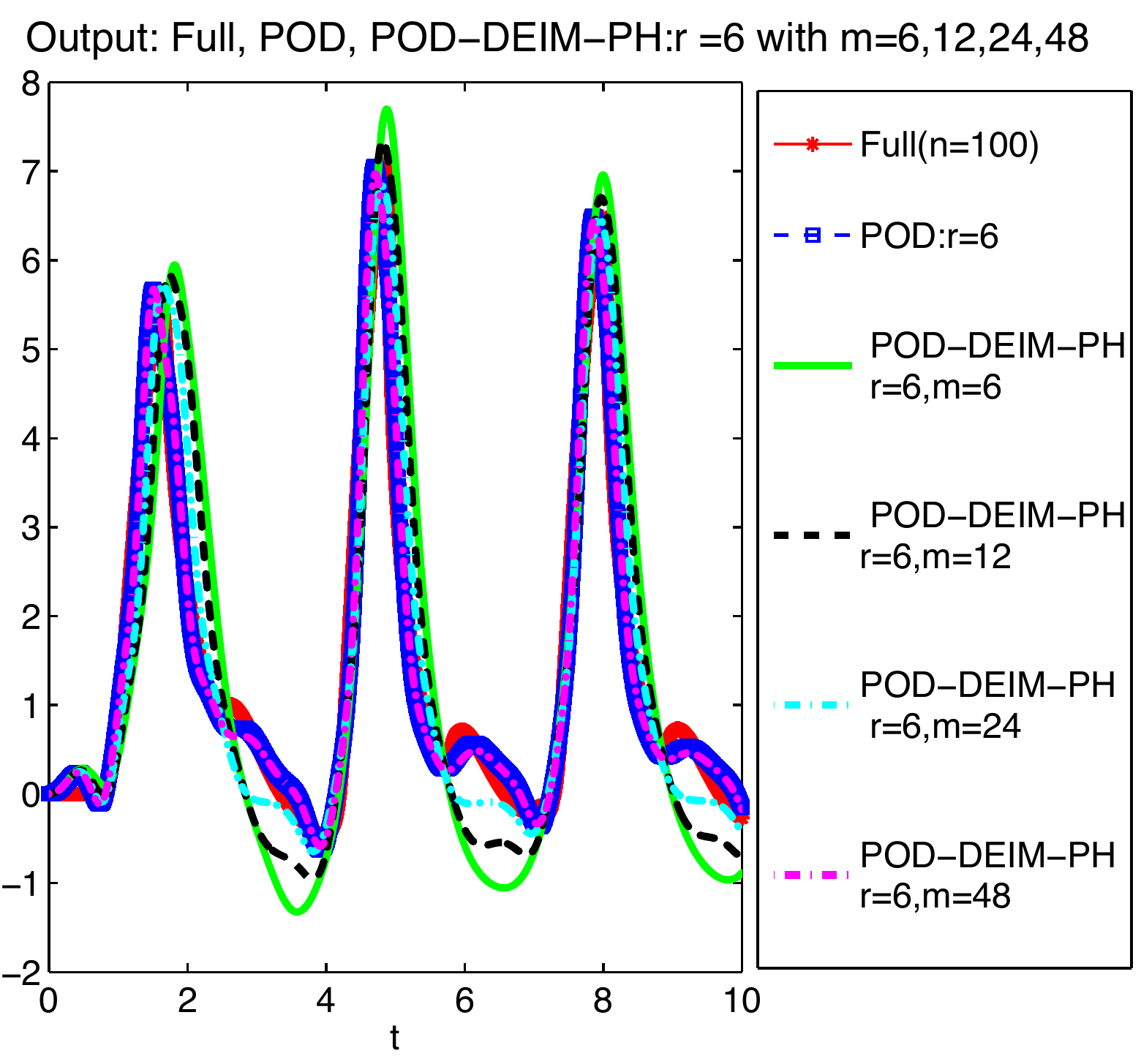}
} 
\label{fig:POD_H2_DEIM_change_m}
\end{figure}
\begin{figure}[h!]
  \caption{
  \scriptsize Ladder Network:
  Average relative errors 
  of the reduced systems constructed from \podph, \poddeimph, \HtwoEpsph, \ and \HtwoEpsdeimph \ bases 
  (with dimensions $r = 2, 4, ...., 18$ and  \textsc{deim} dimension $m=3r$).  
   }
  \centerline{
 \includegraphics[scale=0.35]{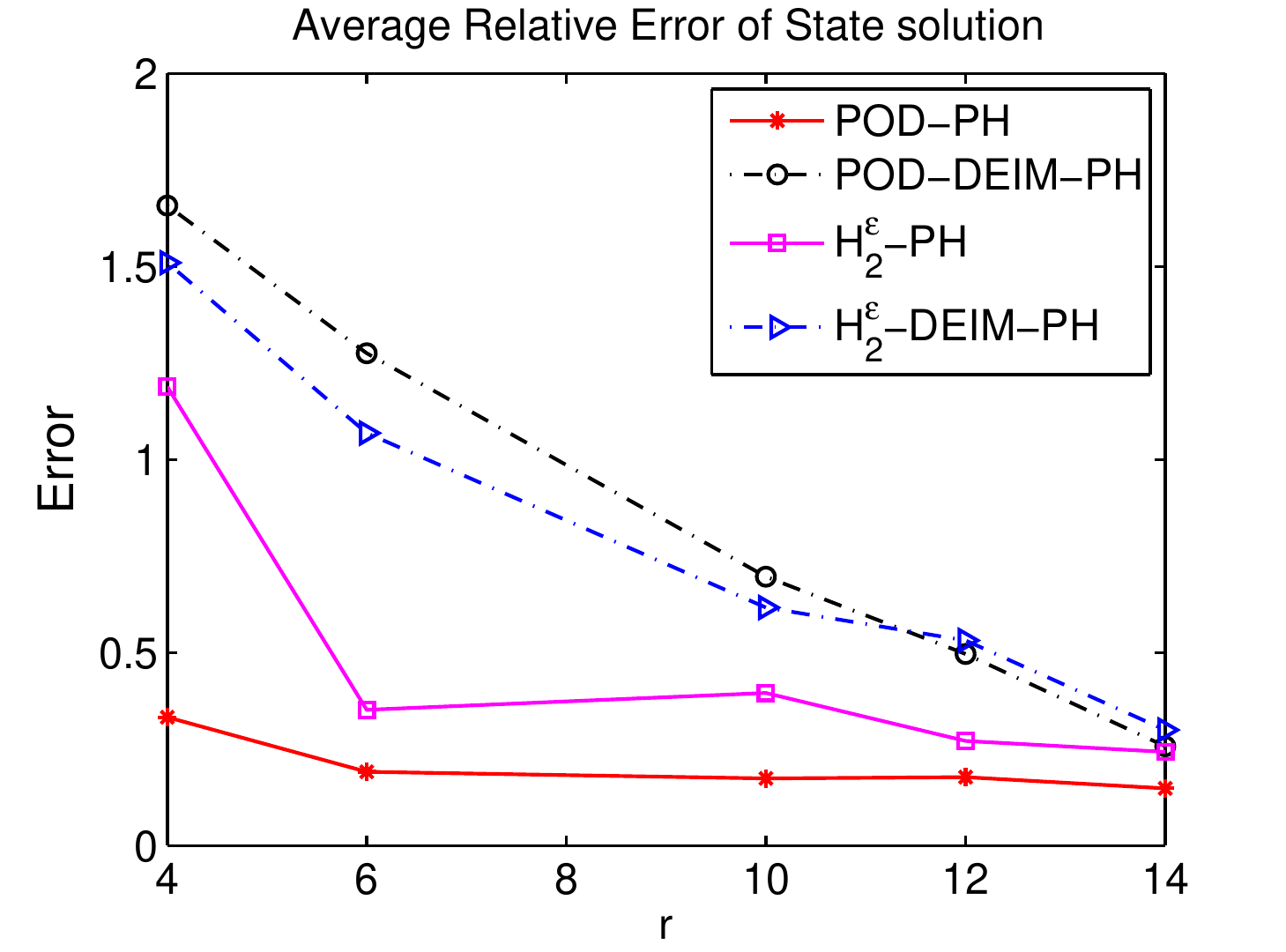}\hspace{-2.5ex}
 \includegraphics[scale=0.35]{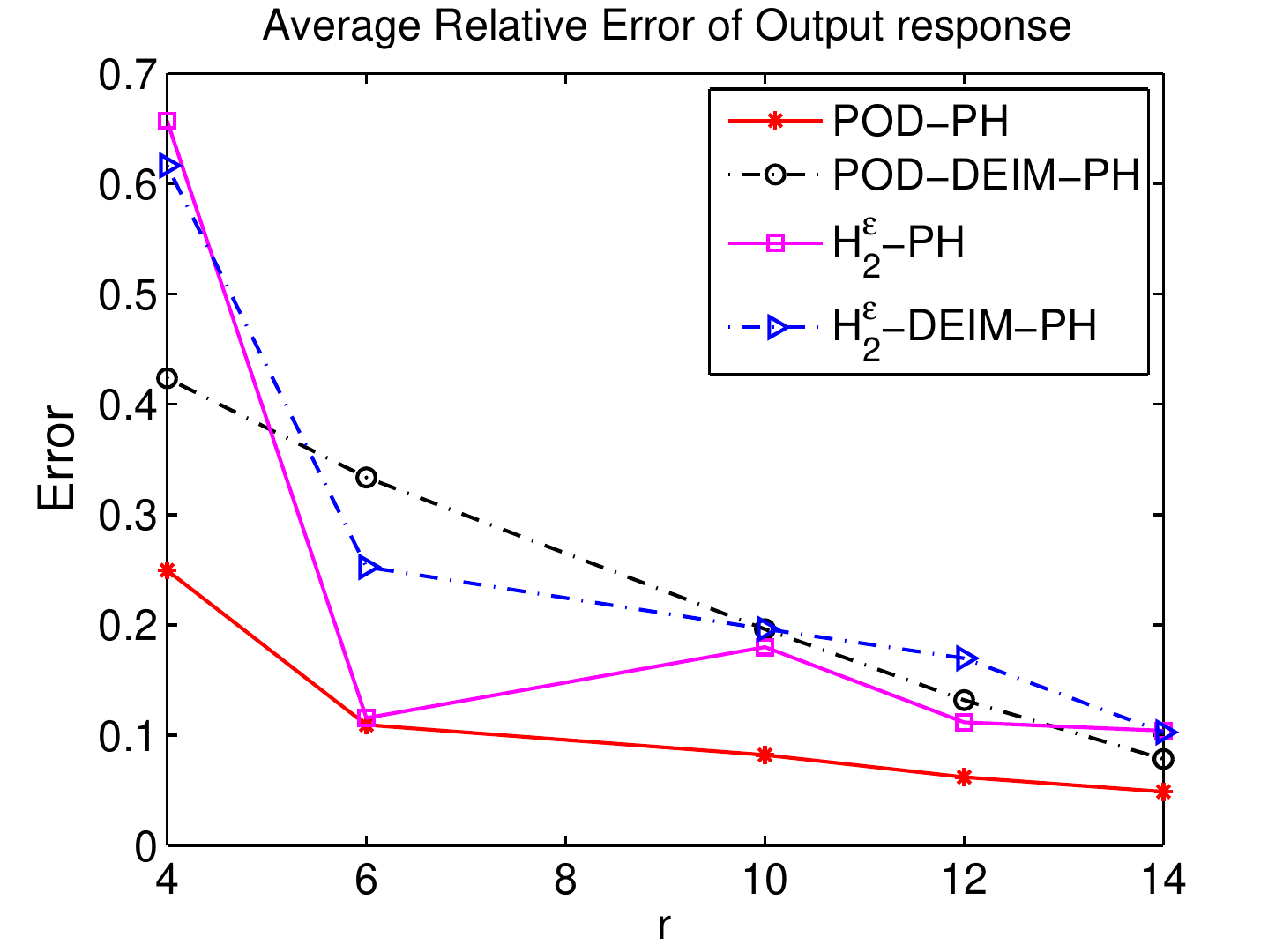}\hspace{-2.5ex}
 }
\label{fig:POD_H2_DEIM_errCPU}
\end{figure}
\begin{figure}[h!]
  \caption{
  \scriptsize Ladder Network:
  Average relative errors using the \podph, \poddeimph, \HtwoEpsph, \HtwoEpsdeimph, and the hybrid 
  \podHtwoEpsph \ and \podHtwoEpsdeimph  \ bases
  (with  $r = 2, 4, ...., 18$ and  \textsc{deim} dimension $m=3r$).  
   }
  \centerline{
\includegraphics[scale=0.5]{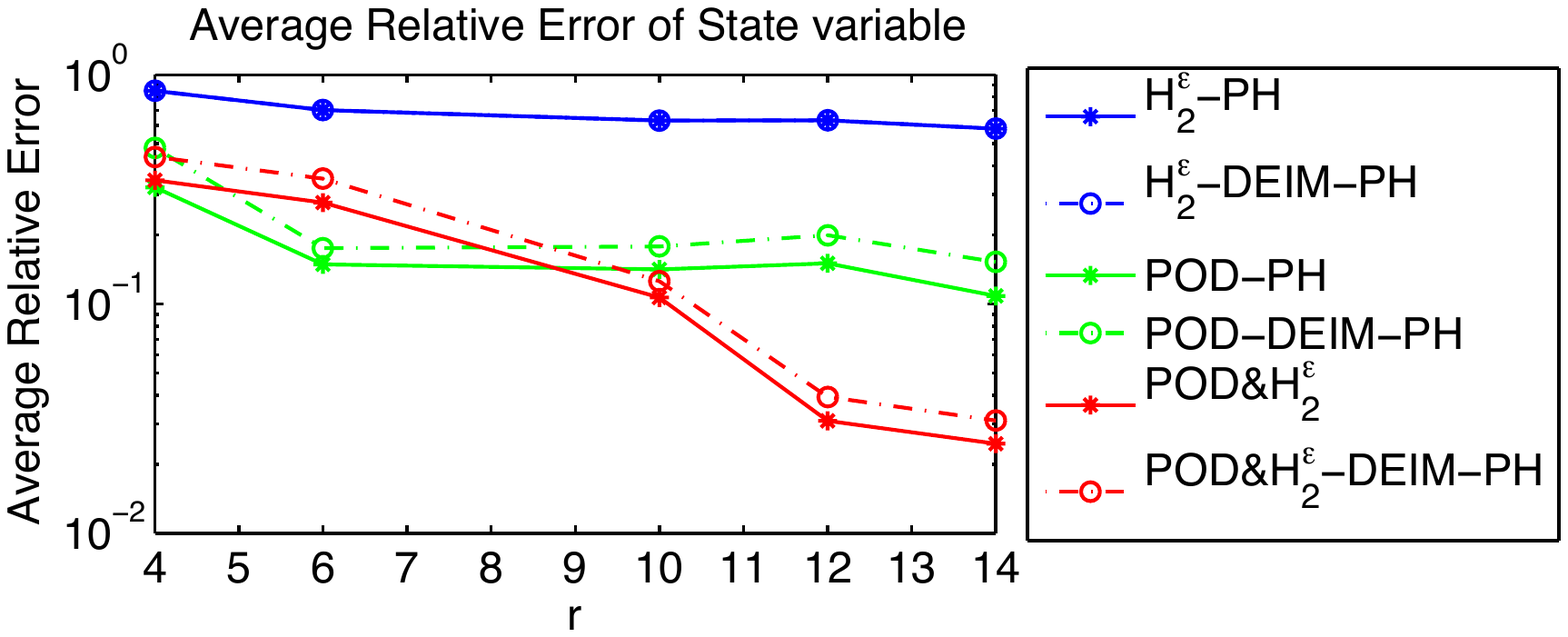}\hspace{-2.5ex}
  }
\label{fig:combPOD_H2_DEIM_err}
\end{figure}

\subsubsection{Toda Lattice}

A Toda lattice model describes the motion of a chain of particles, each one connected to its nearest neighbors with 
'exponential springs'. The equations of motion for the $N$-particle Toda lattice with such exponential interactions
can be written in the form of a  nonlinear port-Hamiltonian system as
in (\ref{PHdef}) 
with
$$\bJ = 
\left[\begin{array}{cc}
\rm{\bf 0} & \bI \\
-\bI & \rm{\bf 0}
\end{array}
\right]\in \real^{n \times n},~
\bR = 
\left[\begin{array}{cc}
\rm{\bf 0} & \rm{\bf 0} \\
\rm{\bf 0} & \rm{diag }(\gamma_1, \dots, \gamma_N)
\end{array}
\right]
\in \real^{n \times n},~ 
\bB = \left[\begin{array}{c}
\rm{\bf 0} \\
\be_1 
\end{array}
\right] \in \real^{n},$$
where  
$n=2N$ and 
$$\bx = \left[\begin{array}{c}
\bq\\
\bp    
\end{array}
\right] \in \real^{n},~~
\bq = [q_1, \dots, q_N]^T,~~\bp = [p_1, \dots, p_N]^T$$
with $q_j$ and $p_j$ being, respectively, the displacement of the $j$-th particle from its equilibrium position and the momentum
of the $j$-th particle for $j= 1, \dots, N$.
In this example; we use 
$N = 1000$ (i.e., the system dimension is $n=2000$), $\bx_0 = {\bf 0}$, and
$\gamma_j   =0.1 $, for $j= 1, \dots, N$.  The corresponding nonlinear Hamiltonian  is given by 
\[
H(\bx) = 
H ([\bq;\bp])= \sum_{k = 1}^{N} \frac{1}{2} p_k^2 + \sum_{k= 1}^{N-1}  \exp(q_k-q_{k+1} )+ \exp(q_N) - q_1 - N.
\]
The decomposition $H(\bx) =  \mbox{$\frac{1}{2}$}\bx^T\bQ\bx + h(\bx)$ is used with 
$$
\bQ=\left[\begin{array}{cc} \bQ_0 & 0 \\ 0 & \bI  \end{array}\right], \quad
\bQ_0=\left[\begin{array}{rrrrrrr} 1 & -1 &  0 & \ldots & 0 & 0\\
-1 & 2 &  -1 & &  & 0\\
 0 & -1 & 2 &  \ddots &  & 0 \\
\vdots &  & \ddots  & \ddots &  & & \\
   &   &            &      & -1 & 0\\
0 &  &  \ldots & -1 & 2 &  -1\\
0 & 0 &  &  0 & -1 & 2  \end{array}\right], 
$$
and $h(\bx)= \sum_{k= 1}^{N-1}(q_k-q_{k+1})^3\varphi(q_k-q_{k+1})$ where 
$\varphi(z)=\frac12\int_0^1\theta^2\,e^{(1-\theta)z}\,d\theta$.
The system was excited with two different inputs: $u(t) = 0.1$ and $u(t) = 0.1 \sin(t)$.
Figure~\ref{fig:Toda_outputs2} shows that the outputs due to both inputs are accurately approximated by the outputs of the \podph~and \poddeimph~reduced systems. The average relative errors and the  simulation times relative to the simulation time of the full model  are illustrated  in Figures~\ref{fig:Toda_POD_H2_DEIM_u01} and~\ref{fig:Toda_POD_H2_DEIM_u01sin}.   
Notice that the accuracy of the  \poddeimph~reduced model captures that of the \podph~model as the \textsc{deim} dimension $m$ increases. Note also that the\poddeimph~reduced model cuts the simulation time by nearly  $96\%$
while retaining  accuracy.
\begin{figure}[th!]
\caption{\scriptsize 
Toda Lattice: The outputs from inputs $u(t) = 0.1$ and $u(t) = 0.1 \sin(t)$. 
}
\centerline{
 \includegraphics[scale=0.34]{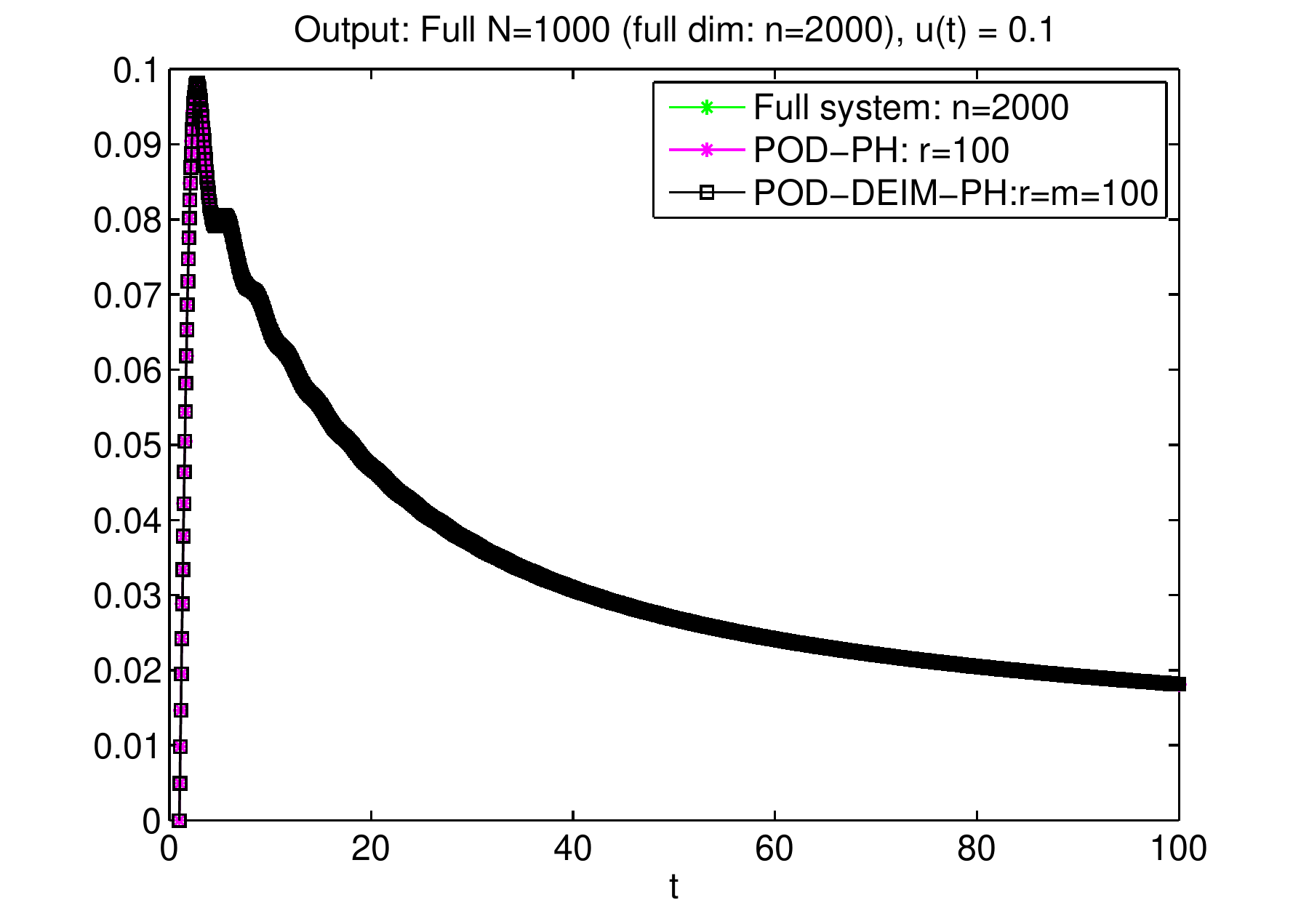}\hspace{-2ex}
 \includegraphics[scale=0.34]{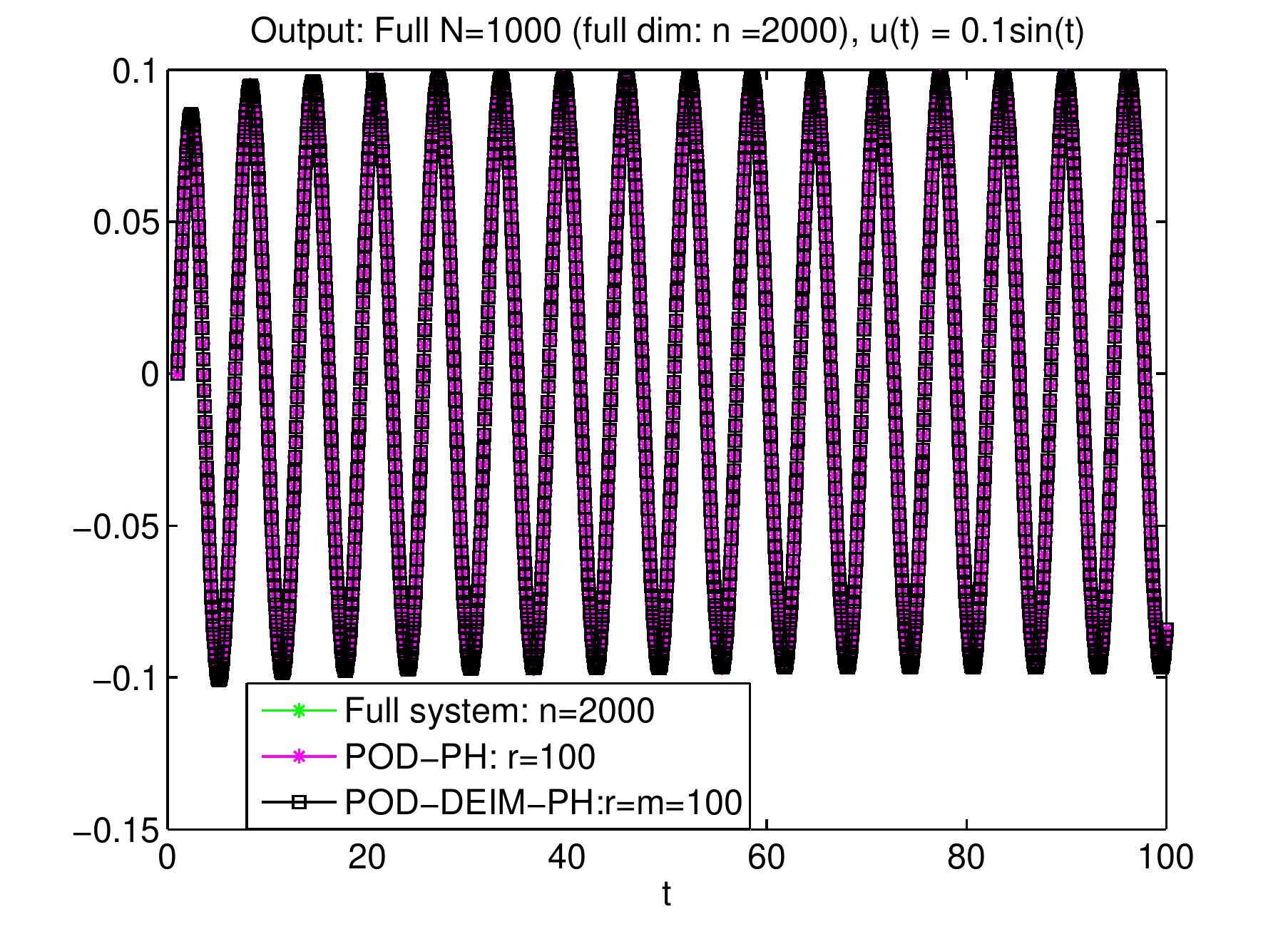}\hspace{-2ex}
} 
\label{fig:Toda_outputs2}
\end{figure}
%
\begin{figure}[th!]
\caption{\scriptsize 
Toda Lattice with input $u(t) = 0.1$: Relative errors of the outputs and the online CPU times for 
\podph~and \poddeimph~reduced systems 
with POD basis dimension $r$ and DEIM dimension $m = r, m_1, m_2$ where  
$m_1 =r +$ {\tt ceil(r/3)},
$m_2 =r +${\tt ceil(2r/3)};
 full-order system: $n = 2000$. 
}
  \centerline{
 \includegraphics[scale=0.32]{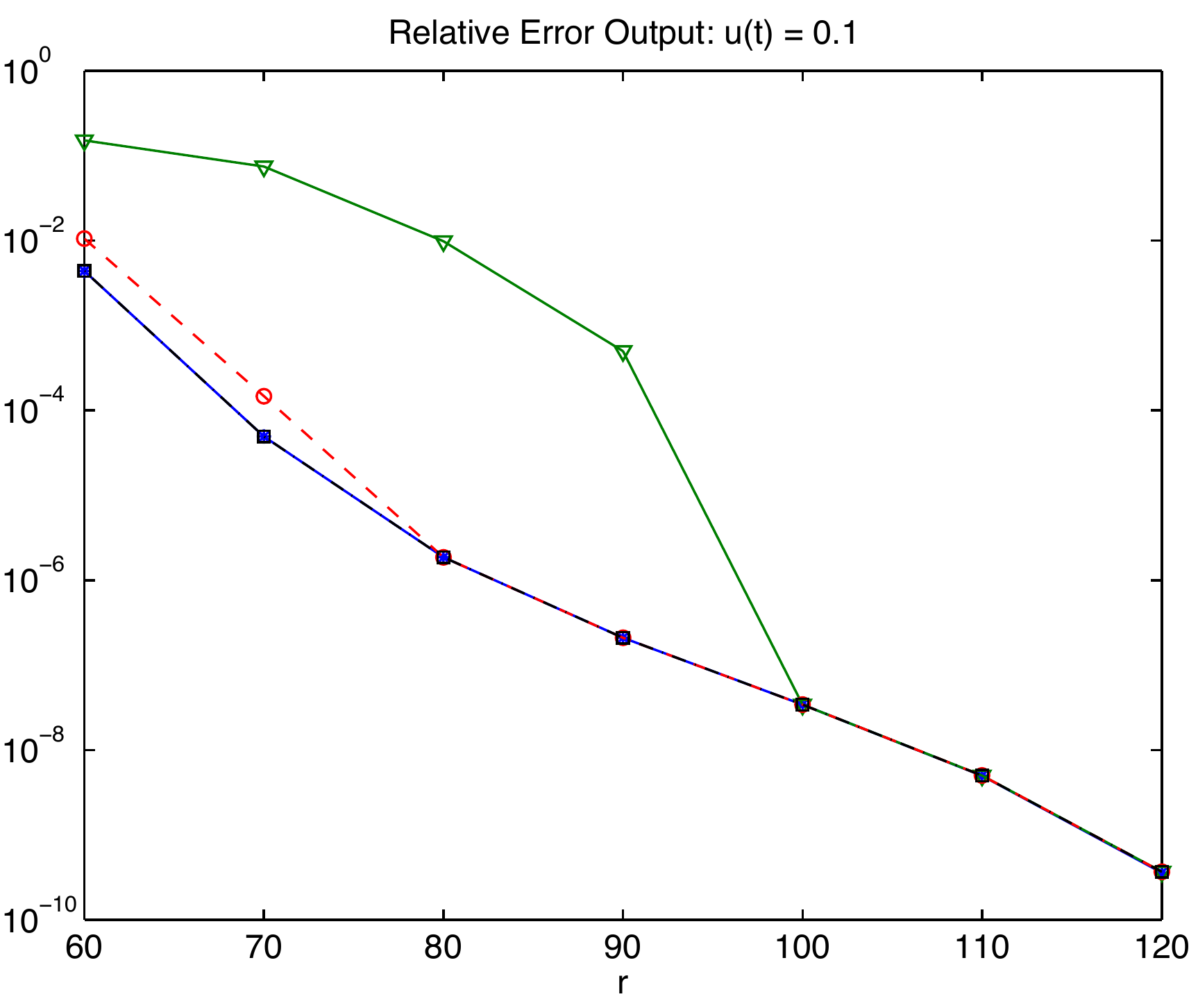}
  \includegraphics[scale=0.25]{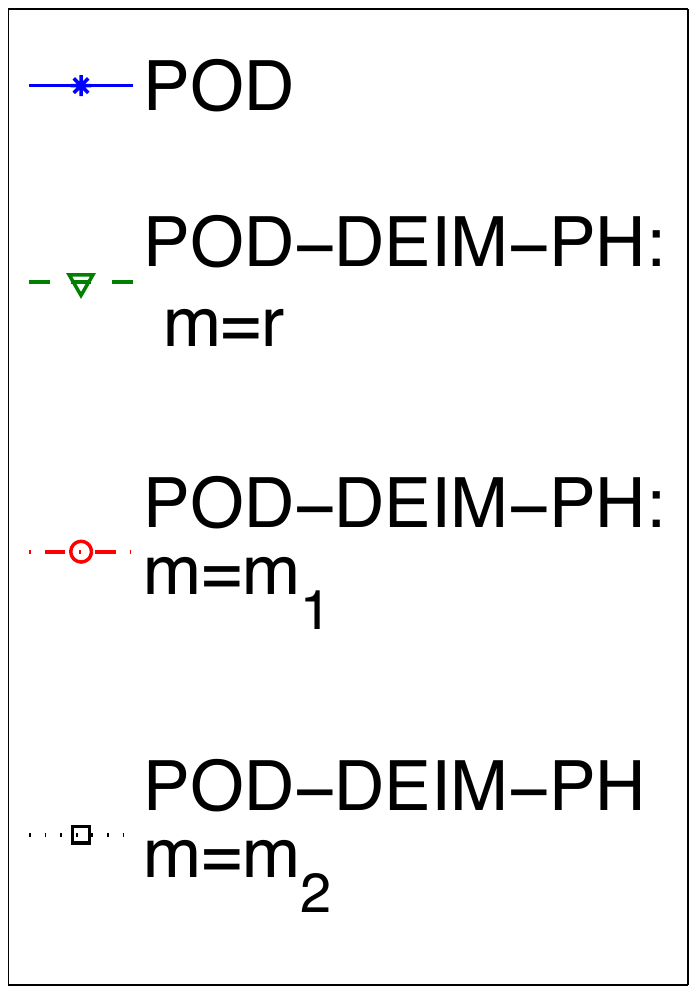}
  \hspace{-0ex}
 \includegraphics[scale=0.32]{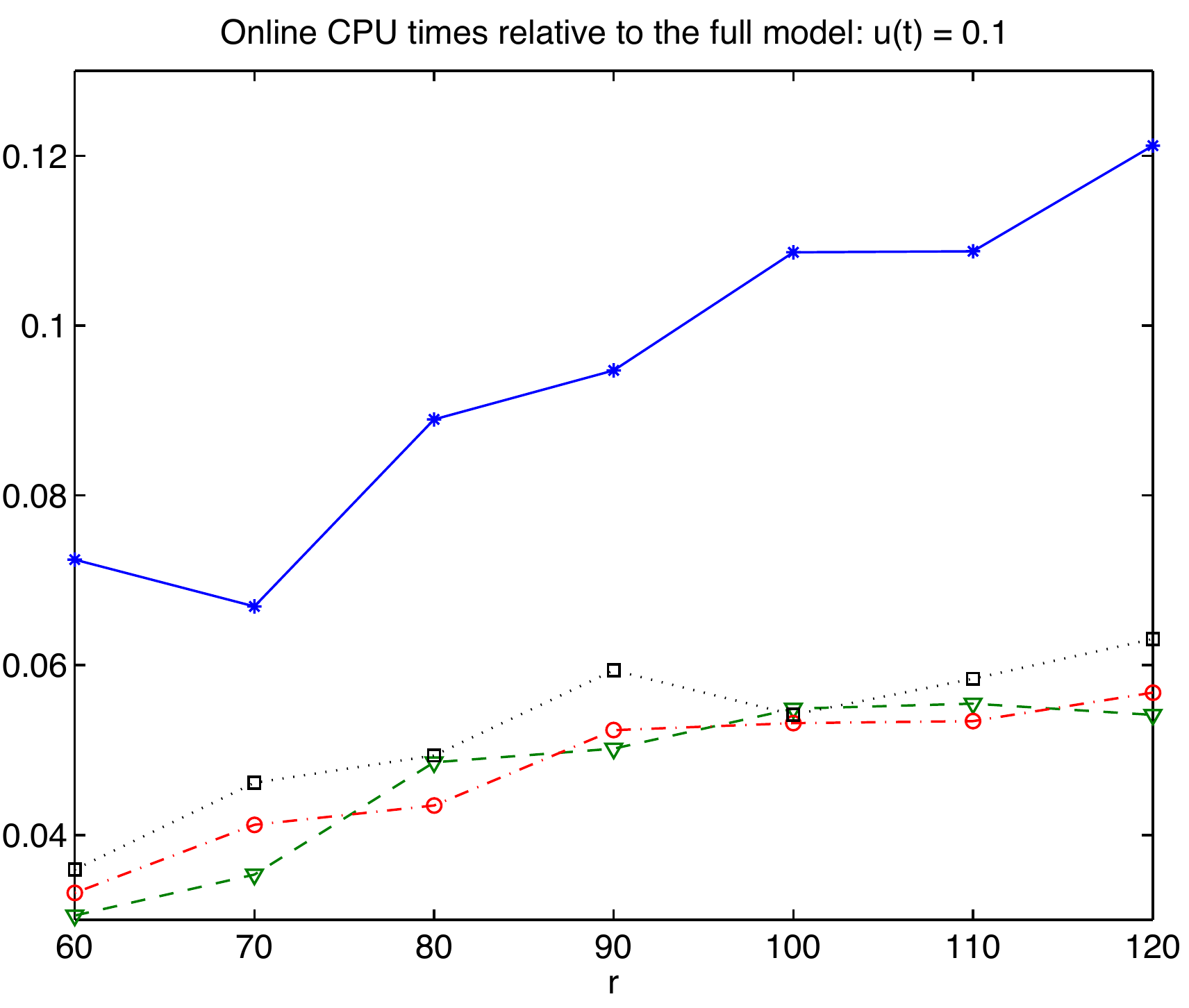}
 }   
\label{fig:Toda_POD_H2_DEIM_u01}
\end{figure}

\begin{figure}
\caption{\scriptsize 
Toda Lattice with input $u(t) = 0.1 \sin(t)$: Relative errors of the outputs and the online CPU times for 
\podph~and \poddeimph~ reduced systems 
with POD basis dimension $r$ and DEIM dimension $m = r, m_1, m_2$ where  
$m_1 =r +$ {\tt ceil(r/3)},
$m_2 =r +${\tt ceil(2r/3)};
 full-order system: $n = 2000$. 
}
 \centerline{
 \includegraphics[scale=0.32]{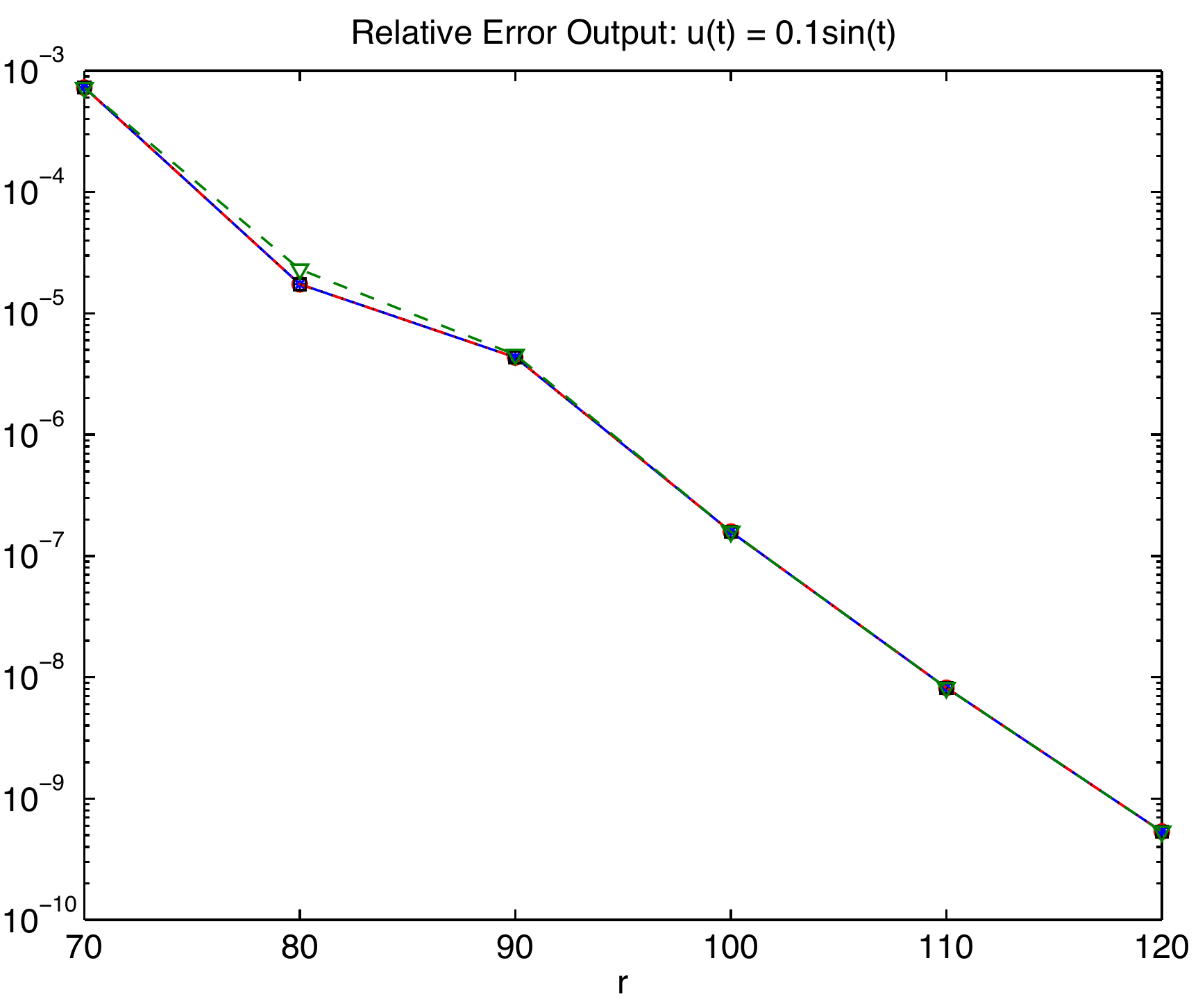}
 \includegraphics[scale=0.25]{errStatePODdeimr60_120_Legend_Dec30}
 \includegraphics[scale=0.32]{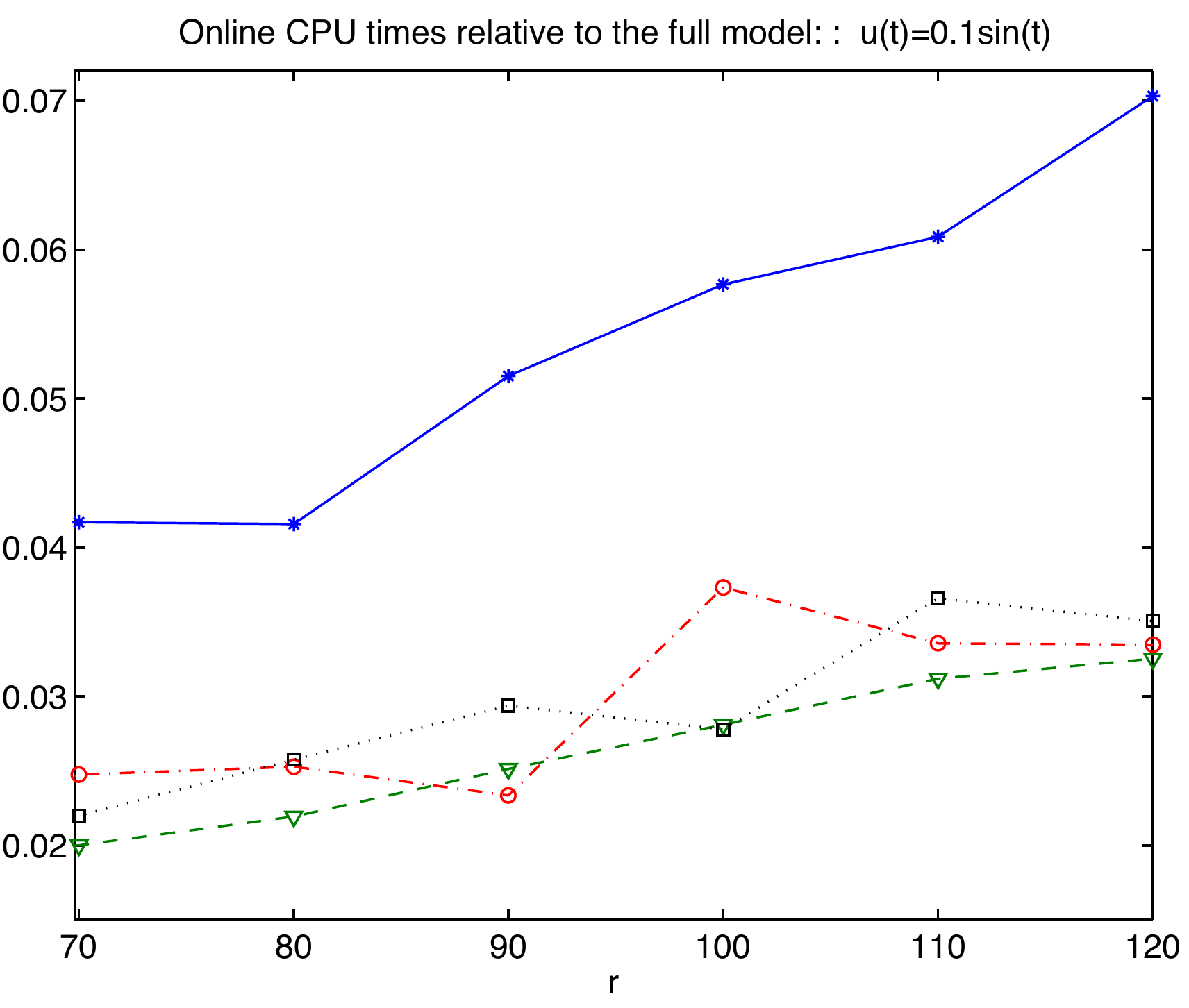}
 }
\label{fig:Toda_POD_H2_DEIM_u01sin}
\end{figure}

\subsection{An \emph{a priori} error bound for PH-preserving DEIM reduction}
\label{sec:MOR_DEIMstr_err}
We derive error bounds for 
a \textsc{deim}-based reduced order model
preserving PH-structure by
%
estimating additional errors that occur by introducing 
the symmetrized-\textsc{deim} approximations into the structure-preserving reduction framework of \S \ref{sec:MOR_Proj_NPH}.     In particular, suppose that reduction 
bases, $\bV_r$ and $\bW_r$,
have been chosen in some manner (e.g., as described as in \S \ref{sec:basis_POD}, 
\S \ref{sec:basis_HtwoEps}, or \S\ref{sec:PODH2}, say), and then are used 
to produce a reduced model that preserves the PH structure of the original system:
\begin{equation} \label{redNPH} 
\begin{array}{ll}
&\dot{\bx}_r  = (\bJ_r -\bR_r )\nabla_{\bx_r}H_r(\bx_r)  +\bB_r \bu(t) \\[2mm]
& \by_r    \ = \ \bB^T_r \nabla_{\bx_r}H_r(\bx_r).
\end{array}
\end{equation}
We then symmetrically "sparsify" the nonlinear interactions in the Hamiltonian gradient evaluation and introduce a further \textsc{deim}-reduction as described in \S \ref{SectPreserveportHamStruct} and Algorithm \ref{alg:poddeimph} in order to produce: 
\begin{equation} \label{DEIMredNPH} 	
\begin{array}{ll}
& \bhatxdot_r = (\bJ_r -\bR_r ) \nabla_{\bhatx_r}\widehat{H}_r(\bhatx_r)  +\bB_r \bu(t) \\[2mm]
& \bhaty_r    \ = \ \bB_r^T \nabla_{\bhatx_r}\widehat{H}_r(\bhatx_r) 
\end{array}
\end{equation}
where $\bx(0)=\bhatx(0)$ and the \textsc{deim} Hamiltonian,  $\widehat{H}$, has been defined in (\ref{DEIMHam}).  
\begin{theorem}
\label{thm:errBd_MORDEIMstr}
  Let $\bx_r(t)$ and $\by_r(t)$ be the state trajectory and output 
 associated with the reduced port-Hamiltonian system (\ref{redNPH}). 
 Suppose a \textsc{deim} basis of order $m$ has been chosen and used to 
 define a \textsc{deim} projection $\mathbb{P}$ and  \textsc{deim} Hamiltonian
$\widehat{H}(\bx)$ as introduced in (\ref{DEIMHam}). 
Recalling the discussion of \S \ref{DEIMHamiltonian}, suppose $\Omega\subset \cV_r $ contains the 
trajectories $\bfV_r\bx_r(t)$ and $\mathbb{P}^T\bfV_r\bx_r$ and  that $\varepsilon_{h}>0$ satisfies
$$
\sup_{\bfxi\in\Omega} \|\nabla_{\bx}h(\bfxi)-\mathbb{P}\nabla_{\bx}h(\mathbb{P}^T\bfxi)\|_{\bQ} \ \leq \ \varepsilon_{h}
$$
 Let $\bhatx_r(t)$ and $\bhaty_r(t)$ be the state trajectory and output 
 associated with the  \textsc{deim}-reduced port-Hamiltonian system (\ref{DEIMredNPH}).

Then with 
$$
\begin{array}{ccc}
\alpha  = \cL_{\bfQ}[\bG]- \rho_{min}, & \bG(\bfxi) = \cbfP_r\cbfA \cbfP_r^T \mathbb{P}\nabla_{\bx}h(\mathbb{P}^T\bfxi),&  \mbox{and}\quad \rho_{min} = \min \lambda(\bfR_r)\\[2mm]
 \beta=\|\cbfP_r\cbfA \cbfP_r^T\|_{\bQ}, & \gamma = 1+L_{\bQ}[\mathbb{P}\,\nabla_{\bx}h\circ \mathbb{P}^T], & \delta= \| \bB\| \, \|\cbfP_r\| 
\end{array}
$$
we have, for $\alpha\neq 0$,
\begin{equation}  \label{DEIMerrBnd}
\begin{array}{c}
\| \bx_r(t) - \bhatx_r(t)\| \leq \frac{\beta}{\alpha} \left(e^{\alpha\,t}-1\right)\, \varepsilon_{h} \\[2mm]
\| \by_r(t)  - \bhaty_r(t) \| \leq  \delta\left(1+\frac{\beta\, \gamma}{\alpha} \left(e^{\alpha\,t}-1\right)\right)\,\varepsilon_{h},
\end{array}
\end{equation}
whereas for $\alpha=0$,
$$
\begin{array}{c}
\| \bx_r(t) - \bhatx_r(t)\| \leq \beta\,t\, \varepsilon_{h} \\[2mm]
\| \by_r(t)  - \bhaty_r(t) \| \leq  \delta\left(1+\beta\,t\,\right)\,\varepsilon_{h}.
\end{array}
$$
\end{theorem}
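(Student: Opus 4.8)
The plan is to treat the \textsc{deim}-reduced system (\ref{DEIMredNPH}) as a perturbation of the exact reduced system (\ref{redNPH}) and to control the resulting state deviation by a one-sided (logarithmic-Lipschitz) differential inequality whose integration produces exactly the factors $(e^{\alpha t}-1)/\alpha$. First I would set $\be_r(t)=\bx_r(t)-\bhatx_r(t)$ and subtract (\ref{DEIMredNPH}) from (\ref{redNPH}). Because both systems carry the identical input term $\bB_r\bu(t)$, the forcing cancels and
$$
\dot{\be}_r = \bA_r\left[\nabla_{\bx_r}H_r(\bx_r) - \nabla_{\bhatx_r}\widehat{H}_r(\bhatx_r)\right],\qquad \bA_r:=\bJ_r-\bR_r.
$$
Using $\nabla_{\bx_r}H_r(\bx_r)=\bx_r+\bV_r^T\nabla_{\bx}h(\bV_r\bx_r)$ and $\nabla_{\bhatx_r}\widehat{H}_r(\bhatx_r)=\bhatx_r+\bV_r^T\mathbb{P}\nabla_{\bx}h(\mathbb{P}^T\bV_r\bhatx_r)$, which both follow from (\ref{DEIMHam}) and $\bV_r^T\bQ\bV_r=\bI_r$, I would insert $\pm\,\bV_r^T\mathbb{P}\nabla_{\bx}h(\mathbb{P}^T\bV_r\bx_r)$ to split $\dot{\be}_r$ into a \emph{vector-field increment} $\bA_r\be_r+\bA_r\bV_r^T[\mathbb{P}\nabla_{\bx}h(\mathbb{P}^T\bV_r\bx_r)-\mathbb{P}\nabla_{\bx}h(\mathbb{P}^T\bV_r\bhatx_r)]$ and a \emph{consistency forcing} $\bg(t):=\bA_r\bV_r^T[\nabla_{\bx}h(\bV_r\bx_r)-\mathbb{P}\nabla_{\bx}h(\mathbb{P}^T\bV_r\bx_r)]$.

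Next I would lift to the $\bQ$-geometry. Setting $\bE_r=\bV_r\be_r$, the identity $\bV_r^T\bQ\bV_r=\bI_r$ gives the isometry $\|\bE_r\|_{\bQ}=\|\be_r\|$, while multiplying through by $\bV_r$ turns $\bV_r\bA_r\bV_r^T$ into $\cbfP_r\cbfA\cbfP_r^T$ (since $\cbfP_r=\bV_r\bW_r^T$ and $\bA_r=\bW_r^T\cbfA\bW_r$). Differentiating $\tfrac12\|\bE_r\|_{\bQ}^2$ produces three contributions. The linear piece gives $\langle\be_r,\bA_r\be_r\rangle=-\langle\be_r,\bR_r\be_r\rangle\le-\rho_{min}\|\bE_r\|_{\bQ}^2$, the skew part $\bJ_r$ dropping out. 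The nonlinear increment equals $\langle\bV_r\bx_r-\bV_r\bhatx_r,\bG(\bV_r\bx_r)-\bG(\bV_r\bhatx_r)\rangle_{\bQ}$, hence is at most $\cL_{\bQ}[\bG]\,\|\bE_r\|_{\bQ}^2$ by the definition (\ref{def:LipClogLipC}) of the logarithmic Lipschitz constant. The forcing contributes at most $\|\bE_r\|_{\bQ}\,\|\cbfP_r\cbfA\cbfP_r^T\|_{\bQ}\,\varepsilon_{h}=\beta\varepsilon_h\|\bE_r\|_{\bQ}$, using $\bV_r\bx_r\in\Omega$ and the hypothesis on $\varepsilon_h$. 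Dividing out $\|\bE_r\|_{\bQ}$ yields
$$
\frac{d}{dt}\|\bE_r\|_{\bQ}\le \alpha\,\|\bE_r\|_{\bQ}+\beta\,\varepsilon_h,\qquad \|\bE_r(0)\|_{\bQ}=0,
$$
the initial condition being zero because $\bx(0)=\bhatx(0)$. Integrating with the factor $e^{-\alpha t}$ gives $\|\be_r(t)\|=\|\bE_r(t)\|_{\bQ}\le\frac{\beta}{\alpha}(e^{\alpha t}-1)\varepsilon_h$ for $\alpha\neq0$, and the $\alpha=0$ case $\beta t\,\varepsilon_h$ follows by the same computation (or by letting $\alpha\to0$).

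For the output I would write $\by_r-\bhaty_r=\bB^T\cbfP_r^T[\nabla_{\bx}H(\bV_r\bx_r)-\nabla_{\bx}\widehat{H}(\bV_r\bhatx_r)]$, using $\bB_r^T\bV_r^T=\bB^T\cbfP_r^T$, and split the bracket at the common argument $\bV_r\bx_r$ into the \textsc{deim} consistency term $\nabla_{\bx}h(\bV_r\bx_r)-\mathbb{P}\nabla_{\bx}h(\mathbb{P}^T\bV_r\bx_r)$, bounded by $\varepsilon_h$ and producing the summand $\delta\varepsilon_h$ (the ``$1$''), and the increment $\nabla_{\bx}\widehat{H}(\bV_r\bx_r)-\nabla_{\bx}\widehat{H}(\bV_r\bhatx_r)$. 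The latter separates into its linear part, which collapses through $\cbfP_r^T\bQ\bV_r=\bW_r$ to contribute a factor $1$, and the sparsified nonlinear part $\mathbb{P}\nabla_{\bx}h\circ\mathbb{P}^T$, whose increment is at most $L_{\bQ}[\mathbb{P}\nabla_{\bx}h\circ\mathbb{P}^T]\,\|\be_r\|$; together these give $\gamma=1+L_{\bQ}[\mathbb{P}\nabla_{\bx}h\circ\mathbb{P}^T]$. Bounding the prefactor by $\|\bB^T\cbfP_r^T\|\le\|\bB\|\,\|\cbfP_r\|=\delta$ and substituting the state bound already obtained gives $\|\by_r-\bhaty_r\|\le\delta(\varepsilon_h+\gamma\|\be_r(t)\|)$, i.e. precisely $\delta(1+\tfrac{\beta\gamma}{\alpha}(e^{\alpha t}-1))\varepsilon_h$, and $\delta(1+\beta t)\varepsilon_h$ when $\alpha=0$.

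The main obstacle is assembling the one-sided differential inequality with the \emph{right} constants: one must see that the conservative part $\bJ_r$ contributes nothing to $\langle\be_r,\bA_r\be_r\rangle$, so only the dissipation $-\rho_{min}$ survives, and one must pass consistently between the $\real^r$ dynamics and the $\bQ$-geometry on $\mathsf{Ran}(\bV_r)$ through the isometry $\|\bV_r\be_r\|_{\bQ}=\|\be_r\|$ in order to identify $\alpha=\cL_{\bQ}[\bG]-\rho_{min}$, $\beta$, and $\gamma$ with the stated $\bQ$-weighted quantities; the output estimate in particular requires care in tracking the $\bQ$-weighting through $\cbfP_r^T$. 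Secondary technical points are the non-differentiability of $t\mapsto\|\bE_r(t)\|_{\bQ}$ at its zeros (handled by the regularization $\sqrt{\|\bE_r\|_{\bQ}^2+\epsilon}$ with $\epsilon\downarrow0$, or by a Dini-derivative comparison) and the $\alpha=0$ limiting case, both routine.
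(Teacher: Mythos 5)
Your proposal is correct and follows essentially the same route as the paper's own proof: the same error variable $\be_r=\bx_r-\bhatx_r$, the same $\pm\,\mathbb{P}\nabla_{\bx}h(\mathbb{P}^T\bV_r\bx_r)$ splitting into a logarithmic-Lipschitz increment plus a consistency forcing bounded by $\beta\varepsilon_h$, the same use of skew-symmetry of $\bJ_r$ to retain only $-\rho_{min}$, Gronwall integration with the $\alpha=0$ case treated separately, and the identical three-term split for the output bound yielding $\delta(\varepsilon_h+\gamma\|\be_r\|)$. The only differences are cosmetic (differentiating $\tfrac12\|\bV_r\be_r\|_{\bQ}^2$ rather than $\|\be_r\|$ directly, and your explicit remarks on non-differentiability of the norm), which do not change the argument.
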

\begin{proof}
Let
$\be_r(t): = \bx_r(t) - \bhatx_r(t)$ be the difference between state trajectories generated 
by (\ref{redNPH}) and (\ref{DEIMredNPH}). 
Define $\bff(\bx) =\nabla_{\bx}h(\bx)$ and observe that 
\begin{align*}
   \frac{d}{dt} \| \be_r(t)\|   & =  \left\langle \frac{\be_r(t)}{\| \be_r(t)\|}, \dot{\be}_r(t) \right\rangle 
     =  \left\langle \frac{\be_r(t)}{\| \be_r(t)\|}, \bA_r \left[\nabla_{\bx_r}H_r(\bx_r)  -  \nabla_{\bhatx_r}\widehat{H}_r(\bhatx_r)  \right]  \right\rangle \\
    &  =  \left\langle \frac{\be_r(t)}{\| \be_r(t)\|}, \bA_r \left[\be_r  + \bV_r^T \left(\nabla_{\bx}h(\bV_r\bx_r)  -  
    \mathbb{P}\, \nabla_{\bx}h(\mathbb{P}^T \bV_r \bhatx_r) \right) \right]  \right\rangle 
  \end{align*}
We have 
\begin{align*}
& \left\langle \be_r, \bA_r \left[\be_r  + \bV_r^T \left(\nabla_{\bx}h(\bV_r\bx_r)  -  
    \mathbb{P}\, \nabla_{\bx}h(\mathbb{P}^T \bV_r \bhatx_r) \right) \right]  \right\rangle \\
 &\qquad  = -\left\langle \be_r, \bR_r \be_r \right\rangle 
 + \left\langle \bV_r\be_r,\,\cbfP_r\bA \cbfP_r^T \left(\nabla_{\bx}h(\bV_r\bx_r)  -  
    \mathbb{P}\, \nabla_{\bx}h(\mathbb{P}^T \bV_r \bhatx_r) \right)  \right\rangle_{\bQ}  \\
 &\qquad  = -\left\langle \be_r, \bR_r \be_r \right\rangle 
  + \left\langle \bV_r\be_r,\,\cbfP_r\bA \cbfP_r^T \left( \mathbb{P}\, \nabla_{\bx}h(\mathbb{P}^T \bV_r \bx_r) -  
  \mathbb{P}\, \nabla_{\bx}h(\mathbb{P}^T \bV_r \bhatx_r) \right)  \right\rangle_{\bQ}   \\
& \qquad\qquad  + \left\langle \bV_r\be_r,\,\cbfP_r\bA \cbfP_r^T \left(\nabla_{\bx}h(\bV_r\bx_r)  
 -  \mathbb{P}\, \nabla_{\bx}h(\mathbb{P}^T \bV_r \bx_r) \right)  \right\rangle_{\bQ}  \\
 & \quad \leq -\rho_{min} \| \be_r\|^2 +  \cL_{\bfQ} [\bG]  \, \| \be_r\|^2  \\ 
 & \qquad\qquad  + \| \be_r\|\ \|\cbfP_r\bA \cbfP_r^T\|_{\bQ}\, \|\nabla_{\bx}h(\bV_r\bx_r)  -  \mathbb{P}\, \nabla_{\bx}h(\mathbb{P}^T \bV_r \bx_r)\|_{\bQ}
  \end{align*}
Thus, 
\begin{align*}
   \frac{d}{dt} \| \be_r(t)\| \leq & \left(\cL_{\bfQ} [\bG]-\rho_{min}\right)  \, \| \be_r(t)\|  \\
  & \qquad + \|\cbfP_r\bA \cbfP_r^T\|_{\bQ}\, \|\nabla_{\bx}h(\bV_r\bx_r(t))  -  \mathbb{P}\, \nabla_{\bx}h(\mathbb{P}^T \bV_r \bx_r(t))\|_{\bQ}.
 \end{align*}
If $\alpha=\cL_{\bfQ} [\bG]-\rho_{min}=0$ then trivially, 
{\small 
$$
\|\bx_r(t) - \bhatx_r(t)\| \leq\ \|\cbfP_r\bA \cbfP_r^T\|_{\bQ}\,\int_0^t \|\nabla_{\bx}h(\bV_r\bx_r(\tau))  -  \mathbb{P}\, \nabla_{\bx}h(\mathbb{P}^T \bV_r \bx_r(\tau))\|_{\bQ} \, d\tau \leq \beta\, t \, \varepsilon_{h}. 
$$
}
If $\alpha\neq 0$ then by Gronwell's inequality, 
 {\small 
 \begin{align*}
\|\bx_r(t) - \bhatx_r(t)\| &\leq\ \|\cbfP_r\bA \cbfP_r^T\|_{\bQ}\,\int_0^t e^{\alpha(t-\tau)}\, \|\nabla_{\bx}h(\bV_r\bx_r(\tau))  -  \mathbb{P}\, \nabla_{\bx}h(\mathbb{P}^T \bV_r \bx_r(\tau))\|_{\bQ} \, d\tau \\
& \leq \frac{\beta}{\alpha} \left(e^{\alpha\,t}-1\right)\, \varepsilon_{h}
 \end{align*}
}
This expression holds with a positive bound regardless of whether $\alpha>0$ or $\alpha<0$. 

The resulting difference in output maps may be directly bounded:
 \begin{align*}
\|\by_r  -\bhaty_r \| \ &= \ \|\bB^T_r \left(\nabla_{\bx_r}H_r(\bx_r)-\nabla_{\bhatx_r}\widehat{H}_r(\bhatx_r)\right)\|\\
&= \ \|\bB^T_r \left(\bx_r  -\bhatx_r + \bV_r^T \left(\nabla_{\bx}h(\bV_r\bx_r)  
              -  \mathbb{P}\, \nabla_{\bx}h(\mathbb{P}^T \bV_r \bhatx_r) \right)\right)\|\\
      &\leq\    \|\bB^T_r \be_r\| + \|\bB^T_r\bV_r^T \left(\nabla_{\bx}h(\bV_r\bx_r)  
      -  \mathbb{P}\, \nabla_{\bx}h(\mathbb{P}^T \bV_r \bx_r) \right) \|  \\ 
      &\   \qquad  + \|\bB^T_r\bV_r^T \left( \mathbb{P}\, \nabla_{\bx}h(\mathbb{P}^T \bV_r \bx_r) -  
  \mathbb{P}\, \nabla_{\bx}h(\mathbb{P}^T \bV_r \bhatx_r) \right)\| \\
   & \leq \| \bB\| \, \|\cbfP_r\|\, \left(1+L_{\bQ}[\mathbb{P}\,\nabla_{\bx}h\circ \mathbb{P}^T]\right)\|\bx_r(t) - \bhatx_r(t)\| 
   +  \| \bB\| \, \|\cbfP_r\|\, \varepsilon_{h},
   \end{align*}
which leads to the second conclusions respectively for $\alpha\neq 0$ and $\alpha=0$.
\end{proof}


\section{Conclusions} 
\label{sec:conclusion}
We have introduced a  structure-preserving projection-based  model reduction framework for large-scale multi-input/multi-output nonlinear port-Hamiltonian systems.
We constructed projection subspaces
using three different 
approaches: \textsc{pod},  $\mathcal{H}_2$-based, and a hybrid \textsc{pod}-$\mathcal{H}_2$ based.
We showed that for the same reduced order, the hybrid basis significantly outperforms the other two.  
We introduced a modification of \textsc{deim} within this framework to approximate the nonlinear part of the Hamiltonian gradient symmetrically. In all cases, the resulting reduced system preserves
port-Hamiltonian structure, and thus retains the stability and passivity of the original
system. We have derived the corresponding \emph{a priori} error bounds of the state variables
and outputs by using an application of generalized logarithmic norms for unbounded nonlinear
operators. The effectiveness of the proposed approaches were shown on a nonlinear ladder
network and a Toda lattice model. 

%
%



\bibliography{litbib}
\bibliographystyle{siam}


\end{document}